\numberwithin{equation}{section}
\newcommand{\figurescaling}{0.585}
\def\R{\mathbb{R}}
\def\l{\left\langle}
\def\r{\right\rangle}
\def\1{\chi}
\newtheorem{theorem}{Theorem}
\newtheorem{definition}{Definition}
\newtheorem{lemma}{Lemma}
\theoremstyle{definition}
\title{On well-posed energy/entropy stable  boundary conditions for the  rotating shallow water equations}
\author[1]{Kenneth Duru}
\author[2,3]{Chuqiao Xu}
\affil[1]{Department of Mathematical Sciences, University of Texas at El Paso, USA}
\affil[2]{Mathematical Sciences Institute, Australian National University, Canberra, Australia}
\affil[3]{School of Mathematics, Shandong University, Jinan, China}
\date{}
\begin{document}
\maketitle

\begin{abstract}

We derive and analyze well-posed, energy- and entropy-stable boundary conditions (BCs) for the two-dimensional linear and nonlinear rotating shallow water equations (RSWE) in vector invariant form. The focus of the study is on subcritical flows, which are commonly observed in atmospheric, oceanic, and geostrophic flow applications. We consider spatial domains with smooth boundaries and formulate both linear and nonlinear BCs using mass flux, Riemann's invariants, and Bernoulli’s potential, ensuring that the resulting initial boundary value problem (IBVP) is provably entropy- and energy-stable.
The linear analysis is comprehensive, providing sufficient conditions to establish the existence, uniqueness, and energy stability of solutions to the linear IBVP. For the nonlinear IBVP, which admits more general solutions, our goal is to develop nonlinear BCs that guarantee entropy stability. We introduce the concepts of linear consistency and linear stability for nonlinear IBVPs, demonstrating that if a nonlinear IBVP is both linearly consistent and linearly stable, then, for sufficiently regular initial and boundary data over a finite time interval, a unique smooth solution exists.
Both the linear and nonlinear IBVPs can be efficiently solved using high-order accurate numerical methods. By employing high-order summation-by-parts operators to discretize spatial derivatives and implementing weak enforcement of BCs via penalty techniques, we develop provably energy- and entropy-stable numerical schemes on curvilinear meshes. Extensive numerical experiments are presented to verify the accuracy of the methods and to demonstrate the robustness of the proposed BCs and numerical schemes.
\end{abstract}


\section{Introduction}
The shallow water equations (SWE) are a fundamental set of equations in fluid dynamics, originally formulated by Saint-Venant in 1871 \cite{saintvenant1871}. They are derived by depth-averaging the Navier-Stokes equations under the assumption that the fluid layer's thickness is very small compared to the horizontal length scales of motion. The rotating shallow water equations (RSWE) extend this framework by incorporating the effects of rotation, notably through the addition of the Coriolis force term to the momentum equations, which is absent in the standard SWE. These equations are essential in various fields such as oceanography and meteorology, where they are used to model phenomena including atmospheric flows \cite{williamson1992standard,behrens1998atmospheric}, ocean currents \cite{zeitlin2018geophysical}, geophysical wave propagation, tides, and river dynamics. Due to their simplified structure, the RSWE and SWE offer a robust mathematical framework for capturing key fluid behaviors over large spatial scales, while avoiding the complexity of full three-dimensional models.

The nonlinear RSWEs are often derived in conservative form, evolving conserved variables such as mass and momentum as the prognostic quantities. However, under conditions of sufficient smoothness, the RSWEs can be reformulated into the so-called vector invariant form, which evolves primitive variables, namely mass and the velocity vector. In meteorology, for example, the vector invariant form of the RSWEs is employed to ensure exact discrete energy conservation \cite{ricardo2023conservation, thuburn2008some}, precise vorticity dynamics, and steady discrete geostrophic balance \cite{cotter2012mixed, lee2018discrete}. To reduce the influence of numerical artifacts that can contaminate simulation results, it is desirable for numerical methods to preserve important invariants inherent to the physical model. For instance, in mid-latitude weather systems, vorticity dynamics play a crucial role. Discrete conservation of vorticity helps prevent the gravitational potential—probably the largest component of atmospheric energy—from spuriously generating absolute vorticity and thereby disrupting meteorological signals \cite{staniforth2012horizontal}.

It is well-known that well-posed boundary conditions (BCs), along with their stable and accurate numerical implementations, are crucial for ensuring robust, reliable, and convergent numerical simulations of partial differential equations (PDEs) \cite{HEW2025113624,nordstrom2022linear,gustafsson1995time,kreiss1974finite}. 
Periodic BCs on cubed sphere meshes are often sufficient to accurately solve the RSWE on the sphere’s surface, thereby enabling effective global-scale atmospheric modeling \cite{thuburn2012framework,shipton2018higher,lee2018mixed,ricardo2023conservation}. However, in many practical applications—such as regional-scale or limited-area atmospheric models \cite{DaviesTerry2014, Baumhefner01011982,Caron2013,TermoniaDeckmynHamdi2009,williamson1992standard} and oceanic flow models \cite{zeitlin2018geophysical}—non-periodic, well-posed BCs with stable numerical implementations are essential for accurate and reliable simulations. For example, in ocean modeling, lateral BCs are necessary to accurately simulate Kelvin waves and associated vortical motions \cite{hsieh1983free,beletsky1997numerical}.
In other contexts, such as tsunami modeling over regional oceanic areas, the domain boundaries are not physical boundaries. Consequently, artificial, non-reflecting BCs must be employed while ensuring the well-posedness and stability of the initial boundary value problem (IBVP). In local weather prediction and regional or limited-area atmospheric models \cite{DaviesTerry2014, Baumhefner01011982,Caron2013,TermoniaDeckmynHamdi2009,williamson1992standard}, boundary data derived from global models are often prescribed at the domain boundaries. It is therefore imperative that boundary closures for regional models yield well-posed BCs and substantially reduce boundary mismatch errors \cite{DaviesTerry2014, Baumhefner01011982,Caron2013}.

The primary objective of this study is to develop well-posed and stable BCs for both the linear and nonlinear RSWE on spatial domains with smooth boundaries. A secondary goal is to formulate these BCs in a manner that facilitates their implementation using various numerical methods, such as finite difference, finite volume, finite element, and discontinuous Galerkin techniques. Additionally, the study aims to develop provably energy and entropy stable, high-order accurate numerical schemes for the linear and nonlinear IBVPs on curvilinear meshes.

The development of robust, high-order accurate numerical methods for well-posed IBVPs typically begins with establishing energy or entropy stability at the continuous level. This continuous analysis can be effectively emulated at the discrete level through the use of summation-by-parts (SBP) operators \cite{kreiss1974finite,BStrand1994,gassner2013skew,mattsson2017diagonal,williams2021provably} and careful treatment of boundary conditions, such as penalty methods like the simultaneous approximation term (SAT) \cite{carpenter1994time,Mattsson2003,HEW2025113624}.
For linear problems, the theory of IBVPs aims to identify the minimal number of boundary conditions necessary to guarantee energy stability \cite{nordstrom2022linear,GHADER20141,HEW2025113624}. In contrast, the analysis and synthesis of nonlinear hyperbolic IBVPs—without linearization—pose significant challenges. Recent efforts, however, (see, e.g., \cite{nordstrom2022nonlinear,nordstrom2024nonlinear}) have begun to address nonlinear analysis for systems such as the SWE with nonlinear boundary conditions. A key ambiguity here is that, while entropy stability—defined as the boundedness of the entropy functional—is necessary, it alone is not sufficient to establish well-posedness of nonlinear IBVPs.
Unlike linear problems, the minimal number of boundary conditions required to achieve entropy stability in nonlinear problems may differ from that prescribed by linear theory \cite{GHADER20141,gustafsson1995time}. We argue that maintaining consistency between the linear and nonlinear formulations is crucial for obtaining reliable results. Specifically, we advocate that the nonlinear IBVP, upon linearization, should produce a well-posed linear IBVP. Ensuring this linear consistency and stability facilitates the proof of the existence and uniqueness of solutions for sufficiently smooth initial and boundary data, at least over finite and sufficiently short  time intervals.

The fundamental properties of fluid dynamics modeled by the RSWE can be characterized by a dimensionless number known as the \textit{Froude number}, defined as $\textrm{Fr} = |\mathbf{u}|/\sqrt{gh}$, where $\mathbf{u}$ is the depth-averaged fluid velocity, $h$ is the water depth, and $g$ is gravitational acceleration. Physically, $h$ must be positive ($h>0$). The flow regime is classified based on the value of $\textrm{Fr}$: subcritical when $\textrm{Fr} < 1$, critical when $\textrm{Fr} = 1$, and supercritical when $\textrm{Fr} >1$. This study focuses on subcritical flows ($\textrm{Fr} <1$), which are commonly observed in atmospheric, oceanic, and geostrophic flow phenomena.

We consider spatial domains with smooth boundaries and formulate both linear and nonlinear BCs using mass flux and Bernoulli's potential, ensuring that the corresponding IBVPs are provably entropy and energy stable. Our analysis of the linear IBVP is comprehensive; similar to \cite{GHADER20141, gustafsson1995time}, it provides sufficient conditions for establishing the existence, uniqueness, and energy stability of solutions.
The nonlinear IBVP accommodates more general solutions, and our goal is to derive nonlinear BCs that guarantee entropy stability for these problems. To this end, we introduce the concepts of linear consistency and linear stability for nonlinear IBVPs. We demonstrate that if a nonlinear IBVP is both linearly consistent and linearly stable, then, for sufficiently regular initial and boundary data over finite time intervals, there exists a unique, smooth solution.
Both the linear and nonlinear IBVPs can be efficiently solved using high-order accurate numerical methods. Specifically, by employing high-order SBP operators \cite{fernandez2014review, svard2014review, BStrand1994, gassner2013skew} for spatial discretization and weak enforcement of BCs via SAT \cite{carpenter1994time, Mattsson2003, HEW2025113624}, we develop provably energy and entropy stable numerical schemes on curvilinear meshes. 
Detailed numerical experiments are presented to verify the accuracy of the methods and demonstrate the robustness of the BCs and the overall numerical framework. This work extends the results of \cite{HEW2025113624}, which addressed the 1D SWE, to the 2D linear and nonlinear RSWE on geometrically complex domains and curvilinear meshes.

The remainder of the paper is organized as follows. In Section\ref{sec:nonlinearRSWE}, we introduce the 2D RSWEs in vector invariant form and derive the evolution equation for the total energy and entropy. Section\ref{sec:linear-IBVP-RSWE} presents the linearization of the model, the derivation of boundary conditions, and the proof of well-posedness for the linear IBVP. We then develop nonlinear boundary conditions that are both entropy stable and suitable for the nonlinear RSWE, along with an analysis of the nonlinear IBVP in Section\ref{sec:nonlinear-IBVP-RSWE}. In Section\ref{sec:numerical-analysis}, we introduce high-order accurate SBP-SAT methods for both linear and nonlinear IBVPs and establish their numerical stability. Detailed numerical experiments are provided in Section\ref{sec:numerical-experiments}. Finally, Section\ref{sec:conclusion} summarizes the main findings and discusses potential directions for future research.

\section{The nonlinear RSWE in vector invariant form}\label{sec:nonlinearRSWE}
The nonlinear 2D  RSWEs in vector invariant form is given by 
\begin{equation}\label{eqn:nlSWE2D}
    \begin{cases}
    \frac{\partial h}{\partial t } + \nabla \cdot \mathbf{F} = 0, \\
    \frac{\partial \mathbf{u}}{\partial t} + \omega \mathbf{u}^\perp + \nabla G = \mathbf{0},\\
    \mathbf{F}=h\mathbf{u}, \quad G = \frac{{1 }}{{2}}|\mathbf{u}|^2 +gh, \quad \omega =  \curl{\mathbf{u}} + f_c, \quad \curl{\mathbf{u}}:= \frac{\partial v}{\partial x } - \frac{\partial u}{\partial y },\end{cases}
\end{equation}
%
where $(x,y) \in \Omega \subset \mathbb{R}^2$ are the spatial variables, $t\in[0,T]$ denotes time and $T>0$ is the final time. The prognostic flow variables are the water height $h>0$ and the  flow velocities $\mathbf{u} = [u,v]^T$, where $\mathbf{u}^\perp = [-v, u]^T$. Note that this is opposed to the flux form of the equations where the prognostics are the conserved variables, water height $h>0$ and the  momentum $\mathbf{u}h$. The diagnostic flow variables are the mass flux $ \mathbf{F}$, the Bernoulli's potential $G$ and the absolute vorticity $\omega$.
Here $f_c$ is the Coriolis frequency and $g>0$ is the constant gravitational acceleration. Furthermore, we assume that the spatial domain is sufficiently smooth and $\partial \Omega$ denotes the boundary of the domain and $\mathbf{n} = [n_x, n_y]^T \in \mathbb{R}^2$ is the outward normal unit vector on the boundary of the domain $\partial \Omega$. 

We augment the RSWEs with the smooth initial conditions
\begin{align}\label{eq:initial-condition}
    \vb{u}|_{t=0} = \vb{u}_0(x,y), \quad {h}|_{t=0} = {h}_0(x,y), \quad (x,y) \in \Omega.
\end{align}
We also need appropriate BCs at the boundary of the domain $\partial \Omega$ in order to close the system and ensure a well-posed IBVP. This will be discussed in detail later in the next sections.

In meteorology, for example, the vector-invariant form of the SWEs \eqref{eqn:nlSWE2D} is often employed to achieve exact discrete energy conservation \cite{ricardo2023conservation, lee2018discrete, lee2018mixed}, precise vorticity dynamics, and discrete steady geostrophic balance. In the present work, we focus on the nonlinear energy balance for regional models, particularly in scenarios involving non-periodic boundary conditions.

We define the elemental energy $e$, as the sum of the kinetic energy and potential energy, and the total energy $E(t)$ as the integral of the elemental energy over the spatial domain
\begin{equation}\label{eq:elemental}
   e=\tfrac{1}{2}h|\vb{u}|^2 + \tfrac{1}{2}gh^2, \quad E(t) = \int_{\Omega} e d\Omega.
\end{equation}
For subcritical flows, with $\mathrm{Fr} = |\vb{u}|/\sqrt{gh} < 1$, the elemental energy is a convex function of the prognostic variables ($h,\vb{u}$) and thus defines a mathematical entropy \cite{ricardo2023conservation,HEW2025113624}. 
\begin{definition}\label{def:stability}
    The RSWE \eqref{eqn:nlSWE2D} with the initial condition \eqref{eq:initial-condition}, appropriate BCs and homogeneous boundary data is called stable  if $dE(t)/dt \le 0$ for all $t \in [0, T]$.
\end{definition}
It is therefore desirable for numerical methods to be designed to bound the total energy and entropy, thereby ensuring nonlinear stability and robustness. The elemental energy $e$ satisfies a continuity equation. To demonstrate this, we take the time derivative of the elemental energy and utilize \eqref{eqn:nlSWE2D} to eliminate the time derivatives of the prognostic variables, yielding
%
\begin{equation}
     \frac{\partial e}{\partial t } = \vb{F}\cdot \frac{\partial \vb{u}}{\partial t } + G\frac{\partial h}{\partial t }   = -\underbrace{\vb{F}\cdot (\omega \vb{u}^\perp)}_{h\omega (\vb{u}\cdot\vb{u}^\perp)=  h\omega(u v - uv) =0} - \vb{F}\cdot \nabla G - G\nabla \cdot \vb{F}.
 \end{equation}
 We have the continuity equation for the energy $e$
\begin{equation}\label{eq:energy_continuity}
    \frac{\partial e}{\partial t } + \nabla \cdot \big(G\vb{F}\big) = 0, \quad G\vb{F} = \frac{1}{2}gh\vb{F} + e \vb{u},
\end{equation}
where we have used the identities 
$$
\vb{F}\cdot  \vb{u}^\perp =  h (u v - uv) = 0, \quad \nabla \cdot \big(G\vb{F}\big) = \vb{F}\cdot \nabla G + G\nabla \cdot \vb{F}.
$$
Note that  $G\vb{F} = (\frac{1}{2}gh\vb{F} + e \vb{u})$ is the energy flux. Thus the energy flux decomposes into a pressure flux $\frac{1}{2}gh\vb{F}$ due to gravity $g>0$ and a transport flux $e \vb{u}$ which is transported by the flow $\vb{u}$.

Integrating equation \eqref{eq:energy_continuity} over the spatial domain yields the conservation of total energy,
\begin{align}\label{eq:energy_conservation}
 \frac{d}{dt}E(t)= \frac{d}{dt}\int_{\Omega} e d\Omega =\mathrm{BT}:= -\oint_{\partial \Omega} G F_n dS, \quad G F_n = \frac{1}{2}gh^2u_n + eu_n,
\end{align}
where $\mathrm{BT}$ is the boundary term, $F_n = \vb{n}\cdot \vb{F} = hu_n$   is the normal mass flux, with $u_n=\vb{n}\cdot \vb{u}$ being the normal velocity on the boundary $\partial \Omega$, and $G$ is the Bernoulli's potential. On a periodic domain the contour integral on the right hand side of \eqref{eq:energy_conservation} vanishes, granting the conservation of total energy, that is $E(t)=E(0)$ for all $t \in [0,T]$, and thus ensuring stability. Periodic BCs on smooth geometries  are often sufficient for the well-posedness of global models posed on the surface of the sphere. At the discrete level, periodic BCs can be implemented by designing appropriate numerical fluxes or SATs that cancel the boundary term $\mathrm{BT}$ or ensure $\mathrm{BT} \le 0$ on computational domain boundaries. 

For regional models defined on bounded domains, non-periodic BCs are essential to accurately represent various physical phenomena at the boundaries \cite{hsieh1983free, beletsky1997numerical}. They enable the enforcement of global data constraints on domain boundaries, for example, \cite{DaviesTerry2014, Baumhefner01011982, Caron2013, TermoniaDeckmynHamdi2009, Williamson1992}. Consequently, well-posed BCs are crucial for ensuring model stability and the convergence of numerical solutions. 
While the theory of well-posedness for linear hyperbolic IBVPs is relatively well-developed, the corresponding theory for nonlinear hyperbolic IBVPs remains less complete. A primary objective of this study is to develop well-posed and stable nonlinear BCs for the nonlinear RSWEs \eqref{eqn:nlSWE2D} on spatial domains $\Omega$ with smooth boundaries $\partial \Omega$. 
A second goal is to formulate these nonlinear BCs in a manner that facilitates their implementation within numerical methods. Lastly, the study aims to develop provably energy- and entropy-stable numerical schemes for the nonlinear IBVP on smooth geometries.

For linear problems, the theory of IBVPs aims to determine the minimal number of BCs necessary to establish energy stability \cite{HEW2025113624,GHADER20141,gustafsson1995time,nordstrom2022linear}. In contrast, for nonlinear problems, the minimal number of BCs required to prove stability may differ from that prescribed by linear theory \cite{nordstrom2022nonlinear,nordstrom2024nonlinear}. We contend, however, that maintaining consistency between the linear and nonlinear IBVPs is crucial for obtaining reliable results. Specifically, we argue that the nonlinear IBVP, when linearized, should produce a well-posed linear IBVP. This linear consistency is essential for establishing the existence of unique solutions for sufficiently smooth initial and boundary data, at finite times.

\section{Linear theory of IBVP for the RSWE}\label{sec:linear-IBVP-RSWE}
In this section we will linearize the nonlinear RSWE  \eqref{eqn:nlSWE2D}, and  give a quick introduction to the theory of IBVP for the linear hyperbolic PDE. We will apply the theory to the linearized RSWE and derive well-posed BCs.
To begin, we introduce zero-mean quantities in the form of perturbed variables, i.e., $u = U + \widetilde{u}$, $v = V + \widetilde{v}$  and $h = H +\widetilde{h}$ with the constant mean states $H > 0$, $U$, $V$. Discarding nonlinear terms of order $\mathcal{O}(\widetilde{u}^2, \widetilde{v}^2, \widetilde{h}^2,\widetilde{u}\widetilde{h}, \widetilde{v}\widetilde{h}, \widetilde{u}\widetilde{v})$, we obtain 
the 2D linear RSWE in vector invariant form 
\begin{equation}\label{eqn:LinSWE2D}
    \begin{cases}
    \frac{\partial h}{\partial t } + \nabla \cdot \mathbf{F} = 0, \\
    \frac{\partial \mathbf{u}}{\partial t} + \omega \mathbf{U}^\perp + f_c \mathbf{u}^\perp + \nabla G = \mathbf{0},\\
     \mathbf{F}= H\mathbf{u} + \mathbf{U}h, \quad G = \mathbf{U}\cdot\mathbf{u} +gh, \quad\omega =  \curl{\mathbf{u}}, \quad \curl{\mathbf{u}}:= \frac{\partial v}{\partial x } - \frac{\partial u}{\partial y },  
    \end{cases}
\end{equation}
where $\mathbf{U} = [U, V]^T$ and $\mathbf{U}^\perp = [-V, U]^T$, and we have dropped the tilde on the fluctuating fields for convenience.
Note that $\mathbf{F} = Uh + Hu$ is the linear mass flux and $G = \mathbf{U}\cdot\mathbf{u} +gh$ is the linear Bernoulli's potential.
As before,  for the linear RSWE \eqref{eqn:LinSWE2D} we define the elemental energy $e$ as the sum of the kinetic energy and potential energy, and the total energy $E(t)$ as the integral of the elemental energy over the spatial domain
\begin{equation}\label{eq:energy_linear}
   e=\tfrac{1}{2}H|\vb{u}|^2 + \tfrac{1}{2}gh^2, \quad  E(t) = \int_{\Omega} e d\Omega,
\end{equation}
where $g>0$ and $H>0$. Note that the total energy is a weighted $L_2$-norm of the prognostic variables $(h, \vb{u})$. Thus a bound on the energy will establish the stability and, hopefully, the well-posedness of the linear RSWE \eqref{eqn:LinSWE2D}.
As in the nonlinear case we show that elemental energy evolves according to a conservation law.
The continuity equation for the energy $e$ defined in \eqref{eq:energy_linear} is given by
\begin{equation}\label{eq:energy_continuity_linear}
    \frac{\partial e}{\partial t } + \nabla \cdot \big(gHh\vb{u} + e\vb{U}\big) = 0.
\end{equation}
Similar to the nonlinear RSWE  as above, note that  the energy flux $gHh\vb{u} + e\vb{U}$ decomposes into a pressure flux $gHh\vb{u}$ due to gravity $g>0$ and a transport flux $e\vb{U}$ which is transported by the background flow $\vb{U}$.

Integrating equation \eqref{eq:energy_continuity_linear} over the spatial domain yields the conservation of total energy,
\begin{align}\label{eq:energy_conservation_linear}
 \frac{d}{dt}E(t)= \frac{d}{dt}\int_{\Omega} e d\Omega = \mathrm{BT}, \quad \mathrm{BT}:=-\oint_{\partial \Omega} \left(gHh{u}_n + e{U}_n\right) dS, 
\end{align}
where $\mathrm{BT}$ is the boundary term, $u_n = \vb{n}\cdot \vb{u}$ and $U_n = \vb{n}\cdot \vb{U}$ are  the normal velocities on the boundaries. 
On a periodic domain the boundary term $\mathrm{BT}$, defined by the contour integral on the right hand side of \eqref{eq:energy_conservation_linear} vanishes, granting the conservation of total energy, that is $E(t)=E(0)$ for all $t \in [0,T]$, and thus ensuring stability of periodic solutions. At the discrete level, periodic BCs can be implemented by designing appropriate numerical fluxes or SATs that cancel the contour integral of the energy flux on computational domain boundaries. 

For regional models with non-periodic BCs, stability alone does not guarantee well-posedness of the IBVP. The way in which BCs are defined can either eliminate the existence of solutions or result in a non-unique solution space, thereby rendering the IBVP ill-posed. Fortunately, the theory of well-posedness for linear hyperbolic IBVPs is well-developed. We will provide a brief overview and refer the reader to \cite{GHADER20141, gustafsson1995time, nordstrom2022linear} for a more comprehensive discussion.

\subsection{Well-posedness of the linear RSWE IBVP}
We will begin by rewriting the linear RSWE \eqref{eqn:LinSWE2D} in standard form, as a system of first order hyperbolic PDE. We define the unknown vector field $\mathbf{q} = [h, u, v]^T$.
The IBVP for the linear RSWE in two space dimensions  can be formulated as follows:
\begin{subequations}
\label{eq6-t}
\renewcommand{\theequation}{\theparentequation\alph{equation}}
\begin{equation}
\label{eq6a-t}
\frac{\partial \mathbf{q}}{\partial t } = D\mathbf{q}, \quad D\mathbf{q}:= -\left(
  A\frac{\partial \mathbf{q}}{\partial x } + B \frac{\partial \mathbf{q}}{\partial y } + C\mathbf{q} \right), \quad (x,y) \in \Omega, \quad t > 0,
\end{equation}
\begin{equation}
\label{eq6b-t}
\textbf{q}(x,y,t) = \mathbf{q}_0(x,y),\quad  (x,y) \in \Omega,\quad  t = 0,
\end{equation}
\begin{equation}
\label{eq6c-t}
\mathcal{B}\mathbf{q}(x,y,t) = \mathbf{d}(x,y,t),\quad  (x,y) \in \partial \Omega ,\quad  t \geq 0,
\end{equation}
\end{subequations}
where the coefficients matrices are given by
\begin{equation}
    \label{eq-m-1}
A = \begin{bmatrix}
U & H & 0\\
g & U & 0\\
0 & 0 & U
\end{bmatrix}, \quad
B = \begin{bmatrix}
V & 0 & H\\
0 & V & 0\\
g & 0 & V
\end{bmatrix},\quad
C = \begin{bmatrix}
0 & 0 & 0\\
0 & 0 & -f_c\\
0 & f_c & 0
\end{bmatrix}.
\end{equation}
Here $\mathbf{q}_0$ and $\mathbf{d}$ are the initial and boundary data, $\mathcal{B}$ is  the boundary operator which enforces BCs on $\mathbf{q}$ at the boundary $\partial \Omega$ of the domain $\Omega$. Note that the boundary operator $\mathcal{B}$ is not defined yet, and needs to be determined so that the IBVP \eqref{eq6a-t}--\eqref{eq6c-t} is well-posed.  In the analysis below, we will consider homogeneous boundary data, however, the analysis can be extended to non-homogeneous boundary data, but this would complicate the algebra.

In the following, we will introduce the relevant notation required to derive the boundary operator $\mathcal{B}$ and prove the well-posedness of the IBVP \eqref{eq6a-t}--\eqref{eq6c-t}. 
To begin, we define the weighted $L^2(\Omega)$ inner product and norm
\begin{equation}\label{eqn:L2norm}
 (\mathbf{p},\mathbf{q})_W := \int_{\Omega } \mathbf{p}^TW\mathbf{q} \; d\Omega, \quad  \|\mathbf{q}\|^2_W := (\mathbf{q},\mathbf{q})_W = \int_{\Omega } e \; d\Omega,   \quad 
 W =  \frac{1}{2}\begin{bmatrix}
g & 0 & 0\\
0 & H & 0\\
0 & 0 & H
\end{bmatrix}.
\end{equation}
Note that  $W$ is diagonal and positive, and $e=\mathbf{q}^TW\mathbf{q} >0, \quad \forall \mathbf{q} = [h,u,v]^T  \in \mathbb{R}^3 \backslash \{\mathbf{0}\}$, and $E=\|\mathbf{q}\|^2_W>0$, where $e$ is the elemental energy and $E$ is the total energy defined in \eqref{eq:energy_linear}. It will be useful to introduce the weighted matrices
\begin{align}
     \label{eqn:weighted-matrices}
WA = \frac{1}{2}\begin{bmatrix}
gU & gH & 0\\
Hg & HU & 0\\
0 & 0 & HU
\end{bmatrix}, \quad
WB = \frac{1}{2}\begin{bmatrix}
gV & 0 & gH\\
0 & HV & 0\\
Hg & 0 & HV
\end{bmatrix},
\quad
WC = \frac{H}{2}\begin{bmatrix}
0 & 0 & 0\\
0 & 0 & -f_c\\
0 & f_c & 0
\end{bmatrix}.
\end{align}
Note that the weighted matrices $WA$ and $WB$ are symmetric  and the weighted matrix $WC$ is an anti-symmetric matrix, that is $(WA)^\top=WA$, $(WB)^\top=WB$, and $(WC)^\top = - WC$.
\begin{definition}\label{eqn:wellposed}
The IBVP \eqref{eq6a-t}--\eqref{eq6c-t} is well-posed if there exists a unique solution, $\mathbf{q} = [h, u, v]^T$, which satisfies 
$$\| \mathbf{q}\|_W \leq K e^{\mu t}\| \mathbf{q}_0\|_W ,$$
for  some constants $K>0$, $\mu$ $\in \mathbb{R}$ independent of the initial data $\mathbf{q}_0$.
\end{definition}
The well-posedness of the IBVP \eqref{eq6a-t}--\eqref{eq6c-t} can be related to the boundedness of the differential operator $D$. We introduce the function space 
\begin{equation}\label{eq:functionspace_V}
\mathbb{V}=\left\{\mathbf{q} \mid \, \mathbf{q}(x,y) \in \mathbb{R}^3, \quad\|\mathbf{q}\|_W<\infty, \quad x \in \Omega, \quad\{\mathcal{B} \mathbf{q}=0, \quad (x,y) \in \partial \Omega\}\right\}.
\end{equation}
The following definition will be useful.
\begin{definition}\label{def:semibounded}
The differential operator $D$ is semi-bounded in the function space $\mathbb{V}$ if $\exists \, \mu \in \mathbb{R}$ independent of $\mathbf{q} \in \mathbb{V}$ such that
$$ 
(\mathbf{q}, D\mathbf{q})_W \leq \mu \|\mathbf{q}\|^2_W, \; \forall \mathbf{q} \in \mathbb{V}.
$$
\end{definition}
\begin{lemma}\label{lem:semi-bounded_BT}
Consider the linear differential operator { $D$} given in \eqref{eq6a-t} subject to the BCs \eqref{eq6c-t}, $\mathcal{B}\mathbf{q} = 0$.
Let $\mathrm{BT} = -\oint_{\partial \Omega}(gHh{u}_n + e{U}_n) dS$ be the boundary term given in \eqref{eq:energy_conservation_linear} and $W$ be the diagonal and positive definite  weight matrix defining the weighted $L^2$-norm \eqref{eqn:L2norm}. If $\mathcal{B}\mathbf{q} = 0$ is such that  the boundary term   $\mathrm{BT}\le 0$, then $D$ is semi-bounded.
\end{lemma}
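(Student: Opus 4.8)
The plan is to compute the inner product $(\mathbf{q}, D\mathbf{q})_W$ directly and show it equals exactly half the boundary term $\mathrm{BT}$ (up to the sign conventions), so that the hypothesis $\mathrm{BT}\le 0$ immediately yields $(\mathbf{q}, D\mathbf{q})_W \le 0 = \mu\|\mathbf{q}\|_W^2$ with $\mu = 0$. Concretely, I would start from
\[
(\mathbf{q}, D\mathbf{q})_W = -\int_\Omega \mathbf{q}^T W\Big(A\,\mathbf{q}_x + B\,\mathbf{q}_y + C\mathbf{q}\Big)\,d\Omega,
\]
and treat the three terms separately. For the $C$ term, since $WC$ is anti-symmetric one has $\mathbf{q}^T WC\,\mathbf{q} = 0$ pointwise, so it drops out entirely. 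For the $A$ and $B$ terms, I would use the symmetry of $WA$ and $WB$ together with the product rule: $\mathbf{q}^T WA\,\mathbf{q}_x = \tfrac12 \partial_x(\mathbf{q}^T WA\,\mathbf{q})$ and similarly in $y$. This rewrites the volume integrand as a pure divergence, $\tfrac12\nabla\cdot\big((\mathbf{q}^T WA\,\mathbf{q})\,\mathbf{e}_x + (\mathbf{q}^T WB\,\mathbf{q})\,\mathbf{e}_y\big)$.

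Next I would apply the divergence theorem to convert the volume integral to a contour integral over $\partial\Omega$, giving
\[
(\mathbf{q}, D\mathbf{q})_W = -\tfrac12\oint_{\partial\Omega}\Big(n_x\,\mathbf{q}^T WA\,\mathbf{q} + n_y\,\mathbf{q}^T WB\,\mathbf{q}\Big)\,dS.
\]
The remaining task is the routine but essential algebraic check that the integrand $n_x\,\mathbf{q}^T WA\,\mathbf{q} + n_y\,\mathbf{q}^T WB\,\mathbf{q}$ coincides with $2(gHh u_n + eU_n)$, i.e. with $-\mathrm{BT}$'s integrand times $2$. Using the explicit forms of $WA$ and $WB$ in \eqref{eqn:weighted-matrices}, one computes $\mathbf{q}^T WA\,\mathbf{q} = \tfrac12(gUh^2 + 2gHhu + HU(u^2+v^2)) = gHhu + Ue$ and similarly $\mathbf{q}^T WB\,\mathbf{q} = gHhv + Ve$; combining with $u_n = n_x u + n_y v$ and $U_n = n_x U + n_y V$ gives exactly $gHh u_n + e U_n$. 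Hence $(\mathbf{q}, D\mathbf{q})_W = \mathrm{BT}$, and the hypothesis $\mathrm{BT}\le 0$ forces $(\mathbf{q}, D\mathbf{q})_W \le 0$, so $D$ is semi-bounded with $\mu = 0$ on $\mathbb{V}$.

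I do not anticipate a genuine obstacle here: the statement is essentially a restatement of the energy identity \eqref{eq:energy_conservation_linear} in operator language, and all the structural facts needed (diagonality and positivity of $W$, symmetry of $WA$, $WB$, anti-symmetry of $WC$) are already recorded in the excerpt. The only point requiring minor care is making the integration-by-parts step rigorous — this uses that $\mathbf{q}\in\mathbb{V}$ has finite weighted norm and that $\Omega$ has a smooth boundary so the divergence theorem applies; one should note that the identity $(\mathbf{q}, D\mathbf{q})_W = \mathrm{BT}$ holds for sufficiently smooth $\mathbf{q}$ and extends by density, which is the standard setting for semi-boundedness arguments. The subtlety to flag is that the conclusion $\mu=0$ depends on the constant coefficients $U, V, H, g, f_c$ being constant (so $A, B, C$ have no spatial derivatives contributing lower-order terms); if the mean state varied, one would pick up an extra volume term and a nonzero $\mu$.
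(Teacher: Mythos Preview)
Your approach is essentially the same as the paper's: use the symmetry of $WA$, $WB$ and the anti-symmetry of $WC$, integrate by parts (equivalently, apply the divergence theorem), and identify the resulting boundary integral with $\mathrm{BT}$, concluding semi-boundedness with $\mu=0$. The paper phrases the symmetrization as $(\mathbf{q},D\mathbf{q})_W+(\mathbf{q},D\mathbf{q})_W=\mathrm{BT}$, while you write it via the product-rule identity $\mathbf{q}^TWA\,\mathbf{q}_x=\tfrac12\partial_x(\mathbf{q}^TWA\,\mathbf{q})$; these are the same step.

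One arithmetic slip: your own (correct) computation gives $\mathbf{q}^TWA\,\mathbf{q}=gHhu+Ue$ and $\mathbf{q}^TWB\,\mathbf{q}=gHhv+Ve$, so $n_x\,\mathbf{q}^TWA\,\mathbf{q}+n_y\,\mathbf{q}^TWB\,\mathbf{q}=gHh\,u_n+e\,U_n$, \emph{not} twice that. Hence $(\mathbf{q},D\mathbf{q})_W=\tfrac12\,\mathrm{BT}$, not $\mathrm{BT}$, matching the paper's identity. This does not affect the conclusion, since $\mathrm{BT}\le 0$ still gives $(\mathbf{q},D\mathbf{q})_W\le 0$.
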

\begin{proof}
Now consider $(\mathbf{q}, D\mathbf{q})_{W  }$ and recall that the weighted constant coefficients matrices given by \eqref{eqn:weighted-matrices} are symmetric. We integrate by parts and we have 
$$
(\mathbf{q}, D\mathbf{q})_{W  } + (\mathbf{q}, D\mathbf{q})_{W  }  = \mathrm{BT}.
$$
So for the boundary operator $\mathcal{B}\mathbf{q} = 0$, if $\mathrm{BT}\le 0$ then 
$$(\mathbf{q}, D\mathbf{q})_{W  } = \frac{1}{2}\mathrm{BT} \le 0.$$
 An upper bound is the case of $\mu=0$. Clearly for $(\mathbf{q}, D\mathbf{q})_{W  } =0$,  $D$ is semi-bounded.
\end{proof}
\noindent
It is often possible to formulate BCs $\mathcal{B}\mathbf{q} =0$ such that $\operatorname{BT} \le 0$. An immediate example is the case of periodic BC, where $\operatorname{BT} = 0$. However, for non-periodic BCs, it is imperative that the boundary operator must not destroy the  existence and uniqueness of solutions. We will now introduce the definition of maximally semi-boundedness of $D$ which will ensure well-posedness of the IBVP \eqref{eq6a-t}--\eqref{eq6c-t}.
\begin{definition}\label{eqn:maximal}
The differential operator $D$ defined in \eqref{eq6a-t} is maximally semi-bounded if it is semi-bounded in the function space $\mathbb{V}$
but not semi-bounded in any function space with fewer BCs.
\end{definition}
\noindent
The maximally semi-boundedness property  is intrinsically connected to well-posedness of the IBVP. We will formulate this result in the following theorem. The reader can consult \cite{gustafsson1995time} for more elaborate discussions.

\begin{theorem}\label{thm:swp}
    Consider the IBVP \eqref{eq6a-t}--\eqref{eq6c-t} if the differential operator $D$ is maximally semi-bounded, $(\mathbf{q}, D\mathbf{q})_W \leq \mu \|\mathbf{q}\|^2_W$, then it is  well-posed. That is, there is a unique solution $\mathbf{q}$ satisfying the estimate
    $$\| \mathbf{q}\|_W \leq K e^{\mu t}\| \mathbf{q}_0\|_W , \quad {K =1}.$$
\end{theorem}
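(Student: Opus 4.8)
The plan is to establish well-posedness in two stages. First I would prove the a priori energy estimate, which simultaneously delivers uniqueness and the claimed bound with $K=1$. Second I would address existence of a solution, which is where the \emph{maximal} semi-boundedness of $D$ (as opposed to mere semi-boundedness) does the real work; for this step the standard duality machinery for semi-bounded operators, as developed in \cite{gustafsson1995time}, is the natural tool.

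For the estimate, I would differentiate the squared norm along a solution. Since $W$ is constant, symmetric and positive definite,
$$
\frac{d}{dt}\|\mathbf{q}(t)\|_W^2 = 2\big(\mathbf{q},\tfrac{\partial \mathbf{q}}{\partial t}\big)_W = 2(\mathbf{q}, D\mathbf{q})_W,
$$
and since $D$ is semi-bounded on $\mathbb{V}$ by hypothesis, the right-hand side is bounded by $2\mu\|\mathbf{q}(t)\|_W^2$. Grönwall's inequality then gives $\|\mathbf{q}(t)\|_W^2 \le e^{2\mu t}\|\mathbf{q}_0\|_W^2$, and taking square roots yields the stated estimate with $K=1$. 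Uniqueness is immediate: by linearity the difference of two solutions with the same data solves the homogeneous IBVP (zero initial and boundary data), lies in $\mathbb{V}$ for every $t$, and therefore has $W$-norm controlled by $e^{\mu t}$ times its vanishing initial norm, so it is identically zero.

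The existence half is the substantive part. Here I would form the formal adjoint $D^{*}$ of $D$ with respect to $(\cdot,\cdot)_W$: integration by parts shows $D^{*}$ equals $-D$ up to the zeroth-order term, acting on the space $\mathbb{V}^{*}$ cut out by the complementary "adjoint" boundary conditions $\mathcal{B}^{*}$. The point of maximal semi-boundedness is that $\mathbb{V}$ carries exactly the minimal number of boundary conditions compatible with $(\mathbf{q},D\mathbf{q})_W \le \mu\|\mathbf{q}\|_W^2$, which is precisely what guarantees that the adjoint (backward-in-time) problem posed on $\mathbb{V}^{*}$ is itself semi-bounded and hence enjoys its own a priori estimate. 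Having a priori estimates for both the forward problem on $\mathbb{V}$ and the adjoint problem on $\mathbb{V}^{*}$, one constructs a weak solution by a Hahn–Banach/Riesz representation argument (equivalently, by a Galerkin approximation in suitably chosen bases), and then upgrades it to a smooth solution by a regularity bootstrap exploiting the assumed smoothness of $\mathbf{q}_0$ (and, in the inhomogeneous case, of $\mathbf{d}$) together with the finite propagation speed of the hyperbolic system \eqref{eq6a-t}.

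The main obstacle is precisely this last stage: verifying that maximal semi-boundedness produces a well-posed adjoint problem — i.e. that $\mathcal{B}$ has neither too many rows, which would over-determine the system and destroy existence, nor too few, which would under-determine it, so that $\mathcal{B}$ and $\mathcal{B}^{*}$ together account for all characteristic modes entering through $\partial\Omega$ — and then pushing the duality argument through the relevant function spaces. By contrast, once $\mathcal{B}$ is known to enforce $\mathrm{BT}\le 0$ via Lemma \ref{lem:semi-bounded_BT}, the energy estimate is routine; what maximal semi-boundedness buys us is that this estimate is not vacuous, because a solution actually exists. For brevity I would carry out the energy estimate and uniqueness in full and refer the reader to \cite{gustafsson1995time} for the functional-analytic details of the existence proof.
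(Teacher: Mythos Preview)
Your energy-estimate argument is exactly what the paper does: differentiate $\|\mathbf{q}\|_W^2$, invoke semi-boundedness to get $\frac{d}{dt}\|\mathbf{q}\|_W^2\le 2\mu\|\mathbf{q}\|_W^2$, apply Gr\"onwall, and read off $K=1$. On that core step you and the paper coincide.

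Where you differ is in scope. The paper's proof stops at the estimate and declares well-posedness by appeal to Definition~\ref{eqn:wellposed}; it does not spell out uniqueness or existence at all. You, by contrast, separate uniqueness (via linearity and the estimate applied to the difference of two solutions) and then sketch the existence argument through the adjoint problem on $\mathbb{V}^{*}$, explaining precisely why the word \emph{maximal} is needed---namely, so that the adjoint boundary operator $\mathcal{B}^{*}$ is itself semi-bounding and the forward/backward duality closes. This is the standard machinery from \cite{gustafsson1995time}, and your summary of it is accurate. In effect you have given a more complete proof than the paper does: the paper implicitly defers existence to the maximal semi-boundedness hypothesis and the cited literature, while you make that deferral explicit and outline what it entails.
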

\begin{proof}
    We consider 
\begin{align}
\begin{split}  
     \frac{d}{dt} \| \mathbf{q}\|_W^2   = \left(\mathbf{q},\frac{\partial\mathbf{q}}{\partial t}\right)_W  + \left(\frac{\partial\mathbf{q}}{\partial t}, \mathbf{q}\right)_W =\left(\mathbf{q},D\mathbf{q}\right)_W + \left(D\mathbf{q}, \mathbf{q}\right)_W  
\end{split}
    \end{align}
    Semi-boundedness  yields
\begin{align}
\begin{split}  
     \frac{d}{dt} \| \mathbf{q}\|_W^2   \leq 2\mu \| \mathbf{q} \|^2_W  \iff
    \frac{d}{dt} \| \mathbf{q} \|_W \leq  \mu \| \mathbf{q}\|_W . \\
\end{split}
    \end{align}
    Gr\"onwall's Lemma  gives
    \begin{align}
    \|\mathbf{q}\|_W \leq e^{\mu t}\|\mathbf{q}_0\|_W , 
\end{align}
With $K=1$ we have the required result of well-posedness given by Definition \ref{eqn:wellposed}.
\end{proof}
\noindent
Note that by Lemma \ref{lem:semi-bounded_BT}, $\mathrm{BT} \le 0$ implies that the differential operator $D$ is semi-bounded in the function space $\mathbb{V}$. Thus to ensure maximally semi-boundedness we will need to determine the minimal number of BCs such that $\mathrm{BT} \le 0$.
It is also noteworthy that while we have considered homogeneous boundary data here the analysis can be extended to non-homogeneous boundary data, in particular when $\mu < 0$. For more elaborate discussions and  examples see  \cite{nordstrom2022linear,GHADER20141,NORDSTROM2020112857} and the references therein. Furthermore, numerical experiments performed later in this study confirms that our results extend to non-homogeneous boundary data. 
%
\subsection{Well-posed linear BCs}
    We will now formulate well-posed BCs for the linear IBVP \eqref{eq6a-t}--\eqref{eq6c-t}.
Well-posed BCs require that the  differential operator $D$ to be  maximally semi-bounded in the function space $\mathbb{V}$. That is, we need a minimal number of BCs so that $D$ is semi-bounded in $\mathbb{V}$.  First,  we will determine the minimal number of BCs needed such that $\mathrm{BT} \le 0$  and proceed later  to give the forms of the BCs.

\paragraph{The number of  BCs for subcritical flows.}
To begin, we consider the outward normal unit vector $\mathbf{n} = [n_x, n_y]^T$ and  introduce the variables
\begin{align}
U_n = n_xU + n_y V, \quad u_n = n_xu + n_y v, \quad \quad u_s = n_yu - n_x v, \quad c = \sqrt{gH}>0,
\end{align}
where $U_n$, $u_n$ are the normal velocities on the boundary, $u_s$ is the tangential velocity on the boundary and $c>0$ is the speed of gravity waves. The boundary is called an \textbf{inflow boundary} when $U_n <0$ and an  \textbf{outflow boundary} when $U_n \ge 0$. We also introduce the dimensionless variables $\mathbf{p}$ and the boundary matrix $M$ defined by
\begin{align}
\mathbf{p}=
\begin{bmatrix}
    h^{\prime}\\
    u_n^{\prime}\\
    u_s^{\prime}
\end{bmatrix}
=
\begin{bmatrix}
    h/H\\
    u_n/c\\
    u_s/c
\end{bmatrix},
\quad 
M = \begin{bmatrix}
    U_n & c & 0\\
    c & U_n & 0\\
    0 & 0 & U_n
\end{bmatrix}.
\end{align}
Consider the boundary term
$$
\mathrm{BT} = -\oint_{\partial \Omega} \left(gHh{u}_n + e{U}_n\right) dS= -\frac{c^2H}{2}\oint_{\partial \Omega}{\left(
\mathbf{p}^TM\mathbf{p}\right)} dS.
$$
By using the eigen-decomposition ${M} = S\Lambda S^{T}$ given by
\begin{eqnarray}
\label{eq:eigen_decompose}
S = \frac{1}{\sqrt{2}}\begin{bmatrix}
        1&1 & 0\\
        1&-1&0\\
        0 & 0 & \sqrt{2}
    \end{bmatrix},
    \quad
    \Lambda = \begin{bmatrix}
        \lambda_1&0&0\\
        0&\lambda_2&0\\
        0&0&\lambda_3\\
    \end{bmatrix},
    \quad 
    \lambda_1 = U_n+c, \quad  \lambda_2 = U_n-c, \quad \lambda_3 = U_n,
\end{eqnarray}
with the linear transformation
\begin{eqnarray}
\label{eq:linear_transform}
    &\begin{bmatrix}
            w_1\\
            w_2\\
            w_3
        \end{bmatrix} 
        = {S}^{\top} \textbf{p} =  \frac{1}{\sqrt{2}}
    \begin{bmatrix}
        h^{\prime}+u_n^{\prime} \\ 
        h^{\prime}-u_n^{\prime}\\
        \sqrt{2}u_s^{\prime}
    \end{bmatrix},
\end{eqnarray}
the boundary term $\mathrm{BT}$ can be re-written as 
  \begin{eqnarray}
        \label{eq:boundary_term_0}
\mathrm{BT}=-\frac{c^2H}{2}\oint_{\partial \Omega}
\left(\lambda_1 w_1^2 + \lambda_2 w_2^2 + \lambda_3 w_3^2 \right)dS.
\end{eqnarray}  
The number of BCs will depend on the signs of the eigenvalues $\lambda_1$, $\lambda_2$, $\lambda_3$, which in turn depend on the magnitude of the normal flow velocity $U_n$ relative to the characteristic wave speed $c$, and determined by the Froude number $\operatorname{Fr} = |\vb{U}|/c$.  In particular, the number of BCs must be equal to the number of negative eigenvalues, $\lambda_1$, $\lambda_2$, $\lambda_3$, of the boundary matrix $M$.  
For subcritical flows with $0\le \operatorname{Fr} <1$, then $\lambda_1>0$, $\lambda_2 <0$, and $\lambda_3 = U_n$ takes the sign of the normal background  flow velocity $U_n$. Thus at the inflow, we have $\lambda_3 = U_n <0$ and at the outflow we have $\lambda_3 = U_n \ge 0$. The number of BCs  are summarized in the Table \ref{tab:num_bound} below for different flow conditions.
\begin{table}[htb!]
\centering
\begin{tabular}{ | m{7.75em}| m{5em} | m{5em}| m{4em}| m{7em}|} 
  \hline
                   Type of boundary                 & $\lambda_1=U_n+c$ & $\lambda_2=U_n-c$ & $\lambda_3=U_n$ &Number of BCs\\ 
  \hline
   Inflow: $U_n <0$ & $>0$ & $<0$ & $<0$ & 2 \\ 
  \hline
   Outflow: $U_n \ge 0$ & $>0$ & $<0$ & $\ge 0$ & 1 \\  
  \hline
\end{tabular}
\caption{The signs of the eigenvalues and number of the BCs for subcritical flows.}
\label{tab:num_bound}
\end{table}

\paragraph{Well-posed and stable BCs for subcritical flows.} For subcritical flows, with $0\le \mathrm{Fr}< 1$, we formulate the inflow BCs, when $U_n \ne 0$, 
    \begin{eqnarray}\label{eq:BC-sub-critical_inflow}
    \{\mathcal{B}\textbf{p}=\textbf{d},  \ (x,y) \in \partial \Omega \} \equiv \{ {w_2 - \gamma w_1 = d_1}; \ {w_3 = d_2}; \ \text{if} \ U_n < 0\},
    \end{eqnarray}
and the outflow BC
    \begin{eqnarray}\label{eq:BC-sub-critical_outflow}
    \{\mathcal{B}\textbf{p}=\textbf{d},  \ (x,y) \in \partial \Omega \} \equiv \{ {w_2 - \gamma w_1 = d_1};  \ \text{if} \ U_n \ge 0\},
    \end{eqnarray}
    where $d_1$ and $d_2$ are boundary data.
Here $\gamma \in \mathbb{R}$ is a boundary reflection coefficient. The following Lemma constraints the boundary reflection coefficient $\gamma$.
\begin{lemma}\label{Lem:BC-Sub-critical flow}
    Consider the boundary term $\operatorname{BT}$ defined in \eqref{eq:boundary_term_0} and the BC \eqref{eq:BC-sub-critical_inflow}--\eqref{eq:BC-sub-critical_outflow} with homogeneous boundary data $\mathbf{d} =0$ for sub-critical flows $ 0\le \operatorname{Fr} < 1$ with $\lambda_1 >0$ and $\lambda_2 <0$. If  $0\le \gamma^2 \le -\lambda_1/\lambda_2$, then the boundary term is never positive, that is $\mathrm{BT} \le 0$.
\end{lemma}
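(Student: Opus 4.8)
The plan is to split $\operatorname{BT}$ boundary-point-wise according to whether the point lies on an inflow ($U_n<0$) or outflow ($U_n\ge 0$) part of $\partial\Omega$, substitute the homogeneous boundary conditions into the diagonalized expression \eqref{eq:boundary_term_0}, and show the resulting quadratic form in the remaining free characteristic variable(s) is nonnegative under the stated constraint on $\gamma$. Since the integrand in \eqref{eq:boundary_term_0} is $\lambda_1 w_1^2+\lambda_2 w_2^2+\lambda_3 w_3^2$, it suffices to show this local quantity is $\ge 0$ at every boundary point, which then gives $\mathrm{BT}\le 0$ after integration.

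First I would treat the outflow case. There $\lambda_3=U_n\ge 0$, so the $w_3$ term is already nonnegative and only $w_1$ remains free after imposing $w_2=\gamma w_1$. The integrand becomes $(\lambda_1+\gamma^2\lambda_2)w_1^2+\lambda_3 w_3^2$; since $\lambda_2<0$, the coefficient of $w_1^2$ is nonnegative precisely when $\gamma^2\le -\lambda_1/\lambda_2$, which is the hypothesis. Next, for the inflow case, I would impose both homogeneous conditions $w_3=0$ and $w_2=\gamma w_1$; the $w_3$ term vanishes entirely and the integrand reduces to $(\lambda_1+\gamma^2\lambda_2)w_1^2$, again nonnegative under $0\le\gamma^2\le -\lambda_1/\lambda_2$. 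Collecting both cases, the integrand in \eqref{eq:boundary_term_0} is pointwise nonnegative, so $\mathrm{BT}=-\tfrac{c^2H}{2}\oint_{\partial\Omega}(\text{nonnegative})\,dS\le 0$, as claimed.

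I do not expect a genuine obstacle here — the result is essentially algebraic once the diagonalization \eqref{eq:eigen_decompose}–\eqref{eq:linear_transform} is in hand. The only point requiring a little care is bookkeeping: making sure the correct characteristic variables are designated as "incoming" (and hence constrained by the BC) on each boundary type, consistent with Table \ref{tab:num_bound} — two constraints at inflow ($w_2$ linked to $w_1$, and $w_3$ set to zero), one at outflow ($w_2$ linked to $w_1$) — and checking that the sign of $\lambda_1+\gamma^2\lambda_2$ is controlled by exactly the inequality $\gamma^2\le -\lambda_1/\lambda_2$ with $\lambda_1>0$, $\lambda_2<0$ (so $-\lambda_1/\lambda_2>0$ and the admissible interval for $\gamma^2$ is nonempty). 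A brief remark that the case $U_n=0$ on an inflow/outflow interface can be subsumed into the outflow analysis (since then $\lambda_3=0$) would round out the argument.
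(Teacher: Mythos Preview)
Your proposal is correct and follows essentially the same approach as the paper's proof: substitute the homogeneous BCs $w_2=\gamma w_1$ (and, at inflow, $w_3=0$) into the diagonalized integrand of \eqref{eq:boundary_term_0}, then use $\lambda_1>0$, $\lambda_2<0$, and $\gamma^2\le -\lambda_1/\lambda_2$ to conclude $(\lambda_1+\gamma^2\lambda_2)w_1^2\ge 0$, with the remaining $\lambda_3 w_3^2$ term handled by its sign in each case. The only cosmetic difference is that you treat outflow before inflow, whereas the paper does the reverse.
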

\begin{proof}
   Let  $w_2 = \gamma w_1$ and consider
        $ \lambda_1 w_1^2 + \lambda_2 w_2^2     =  w_1^2 \left( \lambda_1  + \lambda_2 \gamma^2\right).$
    Note that $\lambda_1>0$, $\lambda_2<0$, and if  $ \gamma^2 \le -\lambda_1/\lambda_2$ then  $\left( \lambda_1  + \lambda_2 \gamma^2\right)\ge 0$. Thus when $\lambda_3 = U_n <0$ the inflow BC \eqref{eq:BC-sub-critical_inflow} gives
    $$
    \mathrm{BT}=-\frac{c^2H}{2}\oint_{\partial \Omega}
\left(\lambda_1 w_1^2 + \lambda_2 w_2^2 + \lambda_3 w_3^2 \right)dS =  -\frac{c^2H}{2}\oint_{\partial \Omega}
\left(\left( \lambda_1  + \lambda_2 \gamma^2\right) w_1^2\right)dS \le 0.
    $$
    Similarly, if  $ \gamma^2 \le -\lambda_1/\lambda_2$ then  $\left( \lambda_1  + \lambda_2 \gamma^2\right)\ge 0$ and  $\lambda_3 = U_n \ge 0$, the outflow BC \eqref{eq:BC-sub-critical_outflow} gives
     $$
    \mathrm{BT}=-\frac{c^2H}{2}\oint_{\partial \Omega}
\left(\lambda_1 w_1^2 + \lambda_2 w_2^2 + \lambda_3 w_3^2 \right)dS =  -\frac{c^2H}{2}\oint_{\partial \Omega}
\left( \left( \lambda_1  + \lambda_2 \gamma^2\right) w_1^2 + \lambda_3 w_3^2\right)dS \le 0.$$
\end{proof}
\noindent
We will summarize this section with the theorem which proves the well-posedness of the IBVP defined by the vector invariant form of the linear SWE \eqref{eqn:LinSWE2D} with the initial condition \eqref{eq:initial-condition}, and the BCs  $\mathcal{B}\mathbf{q}=0$, where $\mathcal{B}\mathbf{q} $ is given by \eqref{eq:BC-sub-critical_inflow}--\eqref{eq:BC-sub-critical_outflow}.
\begin{theorem}\label{theo:well-posdeness-ibvp}
    Consider the IBVP defined by the vector invariant form of the linear RSWE \eqref{eqn:LinSWE2D} with the initial condition \eqref{eq:initial-condition}, and the BC  $\mathcal{B}\mathbf{q}=0$, where $\mathcal{B}\mathbf{q} $ is given by \eqref{eq:BC-sub-critical_inflow}--\eqref{eq:BC-sub-critical_outflow}
     with $0\le \gamma^2 \le -\lambda_1/\lambda_2$.  For subcritical flows, with $0\le\mathrm{Fr} = |\mathbf{U}|/\sqrt{gH} < 1$, the IBVP is well-posed.
     That is, there is a unique $\mathbf{q} = [h,u,v]^T$ that satisfies the estimate,
$$
\|\mathbf{q}\|_W \leq \|\mathbf{q}_0\|_W, \quad \forall \, t \in [0, T],
$$
     where $\mathbf{q}_0 = [h_0,u_0,v_0]^T$ is the compactly supported initial data.
\end{theorem}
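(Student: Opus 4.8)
The plan is to assemble the statement from the machinery already in place: the energy identity $\tfrac{d}{dt}\|\mathbf{q}\|_W^2 = \mathrm{BT}$, Lemma~\ref{lem:semi-bounded_BT} (which turns $\mathrm{BT}\le 0$ into semi-boundedness of $D$), Lemma~\ref{Lem:BC-Sub-critical flow} (which guarantees $\mathrm{BT}\le 0$ under $0\le\gamma^2\le -\lambda_1/\lambda_2$), the eigenvalue count of Table~\ref{tab:num_bound} (which identifies the prescribed BCs as the minimal admissible set, hence $D$ maximally semi-bounded), and finally Theorem~\ref{thm:swp} (which upgrades maximal semi-boundedness to well-posedness with $K=1$). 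So the proof is essentially a bookkeeping assembly, with one genuinely delicate step.

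First I would record the a priori estimate. Differentiating the weighted norm along solutions of \eqref{eq6a-t} gives $\tfrac{d}{dt}\|\mathbf{q}\|_W^2 = (\mathbf{q},D\mathbf{q})_W + (D\mathbf{q},\mathbf{q})_W$; integrating by parts and using that $WA$, $WB$ are symmetric while $WC$ is antisymmetric, the volume contribution of $C$ cancels and those of $A,B$ collapse to the boundary flux, so this quantity equals exactly $\mathrm{BT}$ of \eqref{eq:energy_conservation_linear}. Passing to the characteristic variables $w_1,w_2,w_3$ that diagonalize $M$ recasts it as \eqref{eq:boundary_term_0}. Substituting the homogeneous BCs \eqref{eq:BC-sub-critical_inflow}--\eqref{eq:BC-sub-critical_outflow}, Lemma~\ref{Lem:BC-Sub-critical flow} yields $\mathrm{BT}\le 0$ whenever $0\le\gamma^2\le -\lambda_1/\lambda_2$, hence by Lemma~\ref{lem:semi-bounded_BT}, $(\mathbf{q},D\mathbf{q})_W = \tfrac12\mathrm{BT}\le 0$ for every $\mathbf{q}$ in the function space $\mathbb{V}$ determined by these BCs; i.e.\ $D$ is semi-bounded in $\mathbb{V}$ with $\mu = 0$.

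The step requiring the most care is verifying maximality in the sense of Definition~\ref{eqn:maximal}. For subcritical flow $0\le\mathrm{Fr}<1$ the boundary matrix $M$ has eigenvalues $\lambda_1 = U_n+c>0$, $\lambda_2 = U_n-c<0$, and $\lambda_3 = U_n$, whose sign follows that of the normal background velocity. Thus at an inflow point ($U_n<0$) there are exactly two negative eigenvalues, and the two conditions \eqref{eq:BC-sub-critical_inflow} are imposed on precisely the two ingoing modes $w_2,w_3$ (solved in terms of the outgoing $w_1$); at an outflow point ($U_n\ge 0$) there is exactly one negative eigenvalue, controlled by the single condition \eqref{eq:BC-sub-critical_outflow} on $w_2$. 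If any of these conditions were dropped, one could choose boundary traces concentrated on an inflow (resp.\ outflow) arc that leave the associated $\lambda_j<0$ mode free, making $\mathrm{BT}=-\tfrac{c^2H}{2}\oint_{\partial\Omega}(\lambda_1 w_1^2+\lambda_2 w_2^2+\lambda_3 w_3^2)\,dS$ strictly positive, so $D$ would no longer be semi-bounded in that larger space. One also checks that each boundary operator has full rank on the ingoing subspace, so it neither over- nor under-determines the trace; this is what rules out loss of existence or uniqueness. Hence $D$ is semi-bounded in $\mathbb{V}$ but in no space with fewer BCs: $D$ is maximally semi-bounded.

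Finally, applying Theorem~\ref{thm:swp} with $\mu=0$ and $K=1$ gives a unique $\mathbf{q}=[h,u,v]^T$ satisfying $\|\mathbf{q}\|_W\le\|\mathbf{q}_0\|_W$ for all $t\in[0,T]$, which is the claimed estimate. The main obstacle, as noted, is the maximality argument: converting the heuristic eigenvalue count of Table~\ref{tab:num_bound} into a rigorous statement that fewer BCs break semi-boundedness and that the chosen BCs are non-characteristic and full rank on the ingoing space. A secondary technical point is the transition set $\{U_n=0\}$ on $\partial\Omega$, where $\lambda_3$ changes sign and the count of BCs switches between \eqref{eq:BC-sub-critical_inflow} and \eqref{eq:BC-sub-critical_outflow}; since $\mathrm{BT}$ is an integral over $\partial\Omega$ and this set is of lower dimension, it contributes nothing and the argument goes through unchanged.
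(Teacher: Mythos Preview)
Your proposal is correct and follows essentially the same route as the paper's proof, which simply invokes Lemma~\ref{lem:semi-bounded_BT}, Lemma~\ref{Lem:BC-Sub-critical flow}, and Theorem~\ref{thm:swp} in one line. You have spelled out considerably more of the connective tissue---in particular the maximality argument via the eigenvalue count and the measure-zero transition set $\{U_n=0\}$---than the paper does, but the underlying logic is identical.
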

\begin{proof}
Invoking Lemma \ref{lem:semi-bounded_BT}, Theorem \ref{thm:swp}, and Lemma \ref{Lem:BC-Sub-critical flow} completes the proof of the theorem.
    \end{proof}
Theorem \ref{theo:well-posdeness-ibvp} establishes the existence and stability of a unique solution for the linear IBVP. However, the BCs $\mathcal{B}\mathbf{q}=0$  given by \eqref{eq:BC-sub-critical_inflow}--\eqref{eq:BC-sub-critical_outflow} are a bit cryptic. We will give some physically relevant BCs which are important in several modeling scenarios. We will test the physical BCs against Theorem \ref{theo:well-posdeness-ibvp} to determine if they  give well-posed IBVPs when coupled to the linear RSWE \eqref{eqn:LinSWE2D}.

{\example[Linear Riemann invariants]{
Riemann invariants serve as natural carriers of information in hyperbolic PDE systems and are essential for designing effective boundary conditions. For example, they facilitate the transport of quantities such as mass, pressure, or energy from the boundaries into the computational domain. Additionally, Riemann invariants can be employed to derive transparent or absorbing boundary conditions, which are crucial for minimizing unwanted reflections at the artificial boundaries of a computational domain in regional models and open systems.

We will distinguish an inflow boundary with $U_n < 0$ and an outflow boundary with $U_n\ge 0$. Here the Riemann invariants on the boundary are
\begin{align}
  r_1:= \sqrt{\frac{g}{H}}h + u_n, \quad r_2 := \sqrt{\frac{g}{H}}h - u_n,  \quad r_3 := u_s.   
\end{align}
Note that at an inflow boundary with $U_n<0$, $r_1$ with the characteristic speed $\lambda_1 = U_n + c >0$ is the outgoing Riemann invariant, while $r_2$ and $r_3$  with the characteristic speeds $\lambda_2 = U_n - c < 0$, $\lambda_3 = U_n<0$ respectively, are the incoming Riemann invariants on the boundary. However, at an outflow boundary with $U_n \ge 0$, $r_1$ and $r_3$ are the outgoing  Riemann invariants and $r_2$ is the incoming Riemann invariant on the boundaries. We will  specify BCs by sending boundary data $\mathbf{d}$ through the incoming Riemann invariants. 

For $U_n< 0$ we formulate the inflow BC
\begin{eqnarray}\label{eq:BC-sub-critical_Riemann_inflow}
\mathcal{B}\textbf{q}=
 \begin{cases}
    r_2 := \sqrt{\frac{g}{H}}h - u_n= d_1 \equiv w_2 - \gamma w_1 = d_1/c, \quad \gamma = 0,\\
   r_3 := u_s = d_2 \equiv  \ {w_3 = d_2/c},
   \\
    \end{cases}
    \ \text{if} \ U_n < 0.
    \end{eqnarray}
    Note that if $U_n < 0$, then $\gamma^2 =0 \le -\lambda_1/\lambda_2 < 1.$
    When $U_n \ge 0$ we have
    \begin{eqnarray}\label{eq:BC-sub-critical_Riemann_outflow}
\mathcal{B}\textbf{q}=
    r_n := \sqrt{\frac{g}{H}}h - u_n= d_1 \equiv w_2 - \gamma w_1 = d_1/c, \quad \gamma = 0,
    \ \text{if} \ U_n \ge 0.
    \end{eqnarray}
    Similarly, if $U_n \ge 0$, then $\gamma^2 =0 < 1 \le -\lambda_1/\lambda_2.$
    Therefore the inflow and outflow BCs \eqref{eq:BC-sub-critical_Riemann_inflow}--\eqref{eq:BC-sub-critical_Riemann_outflow} satisfy Lemma \ref{Lem:BC-Sub-critical flow},  and will give a well-posed IBVP in the sense of Theorem \ref{theo:well-posdeness-ibvp}.  Note that with homogeneous data $\mathbf{d}=0$ we have the so-called absorbing/transmissive BCs.
}}

{\example[Linear mass flux]{
The mass flux can be used to control how mass and materials are transported through the boundaries into the computational domain. For example the no mass flux BC can used to ensure that no mass is transported across the boundary. At the inflow, with $U_n<0$, we formulate the inflow  mass flux BC
\begin{eqnarray}\label{eq:BC-sub-critical_massflux_inflow}
\mathcal{B}\textbf{q}=
 \begin{cases}
    F_n := U_nh + Hu_n= d_1 \equiv w_2 - \gamma w_1 = d_1/H, \quad \gamma = -\lambda_1/\lambda_2 = -\frac{U_n + c}{U_n-c},\\
   F_s := u_s = d_2 \equiv  \ {w_3 = {\frac{d_2}{c}}},
    \end{cases}
    \ \text{if} \ U_n < 0.
    \end{eqnarray}
    As before, note that if $U_n < 0$, then $\gamma^2 =\lambda_1^2/\lambda_2^2 < -\lambda_1/\lambda_2 < 1$. 
    Therefore the inflow BCs \eqref{eq:BC-sub-critical_massflux_inflow} satisfy Lemma \ref{Lem:BC-Sub-critical flow},  and will result to a well-posed IBVP in the sense of Theorem \ref{theo:well-posdeness-ibvp}, when coupled to linear RSWE in vector in variant form \eqref{eqn:LinSWE2D}.
    
    At the outflow, with $U_n\ge 0$, the outflow mass flux BC is
    \begin{eqnarray}\label{eq:BC-sub-critical_massflux_outflow}
\mathcal{B}\textbf{q}=
    F_n := U_nh + Hu_n= d_1 \equiv w_2 - \gamma w_1 = d_1/H, \quad \gamma = -\lambda_1/\lambda_2 = -\frac{U_n + c}{U_n-c},
    \quad \text{if} \ U_n \ge  0.
    \end{eqnarray}
    Note that  if $U_n \ge 0$, then $\gamma^2 = \lambda_1^2/\lambda_2^2 \ge   -\lambda_1/\lambda_2 \ge 1.$
    In particular, when $U_n = 0$, then we have the equality 
    $
    \gamma^2 = \lambda_1^2/\lambda_2^2 =  -\lambda_1/\lambda_2=1,
    $  
    and Lemma \ref{Lem:BC-Sub-critical flow} is satisfied. However, when $U_n > 0$, we have  
    $
    \gamma^2 = \lambda_1^2/\lambda_2^2 >  -\lambda_1/\lambda_2>1,
    $ 
    which violates Lemma \ref{Lem:BC-Sub-critical flow}. Thus when $U_n > 0$, the strictly outflow mass flux BC  \eqref{eq:BC-sub-critical_massflux_outflow} is not well-posed in the sense of Theorem \ref{theo:well-posdeness-ibvp}.
}}
{\example[Linear Bernoulli's principle]{
The Bernoulli's principle can be used to enforce how pressure or energy is  transported across the boundaries of the computational domain.  For the inflow boundary with $U_n<0$ we formulate the inflow  Bernoulli's BC
\begin{eqnarray}\label{eq:BC-sub-critical_velocity-flux_inflow}
\mathcal{B}\textbf{q}=
 \begin{cases}
    G_n := U_nu_n + gh= d_1 \equiv w_2 - \gamma w_1 = d_1/c, \quad \gamma = \lambda_1/\lambda_2 = \frac{U_n + c}{U_n-c},\\
   G_s := u_s = d_2 \equiv  \ {w_3 = {\frac{d_2}{c}}},
    \end{cases}
    \quad \text{if} \ U_n < 0.
    \end{eqnarray}
    Note that if $U_n < 0$, then $\gamma^2 =\lambda_1^2/\lambda_2^2 \le -\lambda_1/\lambda_2 < 1$. 
    Therefore the inflow BCs \eqref{eq:BC-sub-critical_velocity-flux_inflow} satisfy Lemma \ref{Lem:BC-Sub-critical flow},  and will result to a well-posed IBVP in the sense of Theorem \ref{theo:well-posdeness-ibvp}, when coupled to linear RSWE in vector in variant form \eqref{eqn:LinSWE2D}.
    
    When $U_n\ge 0$ the outflow Bernoulli's BC is
    \begin{eqnarray}\label{eq:BC-sub-critical_velocity-flux_outflow}
\mathcal{B}\textbf{q}=
    G_n := U_nu_n + gh= d_1 \equiv w_2 - \gamma w_1 = d_1/c, \quad \gamma = \lambda_1/\lambda_2 = \frac{U_n + c}{U_n-c},
    \quad \text{if} \ U_n \ge  0.
    \end{eqnarray}
    Thus, if $U_n \ge 0$, then $\gamma^2 = \lambda_1^2/\lambda_2^2 \ge   -\lambda_1/\lambda_2 \ge 1.$
   Again when $U_n = 0$, we have the equality 
    $
    \gamma^2 = \lambda_1^2/\lambda_2^2 =  -\lambda_1/\lambda_2=1,
    $  
    and Lemma \ref{Lem:BC-Sub-critical flow} is satisfied. However,
    When $U_n > 0$, we have  
    $
    \gamma^2 = \lambda_1^2/\lambda_2^2 >  -\lambda_1/\lambda_2>1,
    $ 
    which violates Lemma \ref{Lem:BC-Sub-critical flow}. Thus when $U_n > 0$, the strictly outflow Bernoulli's potential BC  \eqref{eq:BC-sub-critical_velocity-flux_outflow} is not well-posed in the sense of Theorem \ref{theo:well-posdeness-ibvp}.
}}

As shown in the examples above, note that at the inflow boundary with $U_n < 0$ or when $U_n=0$ we have much more flexibility, where the three physical BCs \eqref{eq:BC-sub-critical_Riemann_inflow}, \eqref{eq:BC-sub-critical_massflux_inflow} and \eqref{eq:BC-sub-critical_velocity-flux_outflow} yield stable and well-posed BCs. There is, however, less flexibility at the strictly outflow boundary with $U_n > 0$, since only the linear Riemann BC \eqref{eq:BC-sub-critical_Riemann_inflow} yield a stable and well-posed IBVP there.
\section{Nonlinear theory of IBVP for the RSWE}\label{sec:nonlinear-IBVP-RSWE}
In this section, we extend the linear analysis from the previous section to the nonlinear vector invariant RSWE \eqref{eqn:nlSWE2D}. It is particularly important to emphasize that the nonlinear analysis must be consistent with the conclusions derived from the linear analysis. Specifically, the number of boundary conditions at the inflow and outflow boundaries must align with the linear theory. Any discrepancy would imply that a valid linearization contradicts the linear analysis, thereby undermining the effectiveness of the nonlinear theory. 
A key and overarching requirement for the nonlinear IBVP is that the nonlinear boundary conditions be formulated such that their linearization results in a well-posed linear IBVP, in accordance with Theorem \ref{theo:well-posdeness-ibvp}. This consistency is essential for establishing the existence and stability of a unique, smooth solution.
\subsection{Nonlinear stability and linear consistency}
For the nonlinear vector invariant RSWE \eqref{eqn:nlSWE2D} our main objective is to design the nonlinear BCs such that we can prove total energy/entropy stability.
To begin, we consider the outward normal unit vector $\mathbf{n} = [n_x, n_y]^T$ and  introduce the variables
\begin{align}\label{eq:boundary-fields}
    u_n = n_xu + n_y v, \quad \quad u_s = n_yu - n_x v, \quad F_n = hu_n, \quad G_n = \frac12 u_n^2 + gh^2,
\end{align}
where $u_n$ is the normal velocity on the boundary, $u_s$ is the tangential velocity on the boundary.
We can rewrite the right hand side of the evolution equation of the total energy/entropy $E(t)$ given by \eqref{eq:energy_conservation} as
\begin{align}\label{eq:energy_conservation_2}
 \frac{d}{dt}E(t) =\mathrm{BT}:= -\oint_{\partial \Omega} G F_n dS, \quad G F_n = F_nG_n + \frac{1}{2} hu_n u_s^2,
\end{align}
where $\mathrm{BT}$ is the boundary term.
The following definition is crucial for the upcoming nonlinear analysis
\begin{definition}
    A nonlinear BC $\mathcal{B}\mathbf{q}=\mathbf{d}$ for the nonlinear vector invariant RSWE \eqref{eqn:nlSWE2D} for subcritical flows is energy/entropy stable if for homogeneous boundary data $\mathbf{d} =0$ we have $dE(t)/dt = \mathrm{BT} \le 0$, where $E(t)$ is the total energy/entropy and $\mathrm{BT}$ is the boundary term.
\end{definition}
\noindent
As before, the stability and well-posedness of the nonlinear IBVP are closely linked to the boundary term $\mathrm{BT}$. A necessary requirement is that the nonlinear BCs must ensure that the boundary term is never positive, that is $\mathrm{BT}\le 0$. However, unlike the linear case, for the nonlinear problem, the minimal number of BCs required to ensure $\mathrm{BT} \le 0$ may not suffice to guarantee well-posedness. Such an approach could also contradict the linear theory. For instance, natural boundary conditions such as no mass flux ($F_n=0$) or no-slip ($u_n=0$) lead to $\mathrm{BT} = 0$ regardless of the flow conditions. As shown in our linear analysis (see Table \ref{tab:num_bound}), at an inflow boundary, a single BC is insufficient to produce a well-posed IBVP for the linear RSWE. Therefore, the following two definitions are crucial for the current study.
%
\begin{definition}\label{def:linear-consistency}
    A nonlinear BC $\mathcal{B}\mathbf{q}=0$ for the nonlinear vector invariant RSWE \eqref{eqn:nlSWE2D} for subcritical flows is linearly consistent if the number of BCs specified by the nonlinear boundary  operator $\mathcal{B}\mathbf{q}$, at an inflow boundary and an outflow boundary, is consistent with the number of BCs prescribed by the linear theory, as  summarized in Table \ref{tab:num_bound}.
\end{definition}
\begin{definition}\label{def:linear-stability}
    A nonlinear BC $\mathcal{B}\mathbf{q}=0$ for the nonlinear vector invariant RSWE \eqref{eqn:nlSWE2D} for subcritical flows is linearly stable if a linearization of the boundary  operator $\mathcal{B}\mathbf{q}$ satisfies  Lemma \ref{Lem:BC-Sub-critical flow}.
\end{definition}
\noindent
If the nonlinear boundary operator $\mathcal{B}\mathbf{q}$ is both linearly consistent and linearly stable then the following theorem ensures the existence and stability of a smooth unique solution, for a sufficiently smooth and compactly supported initial data, and finite time $t\in[0,T]$.
\begin{theorem}\label{theo:exitenceofuniquesmoothsolution}
    Consider the nonlinear vector invariant RSWE \eqref{eqn:nlSWE2D} at subcritical flows subject to the BCs $\mathcal{B}\mathbf{q}=\mathbf{d}$ and the initial condition \eqref{eq:initial-condition} with compatible initial data $\mathbf{q}_0=[h_0(x,y), u_0(x,y), v_0(x,y)]^T\in \mathbb{R}^3$. Let $[H, U, V]^T \in\mathbb{R}^3$ be an arbitrary constant state with $H>0$ and $\mathbf{q} = [H + \widetilde{h}, U + \widetilde{u}, V + \widetilde{v}]^T$ linearizes the IBVP, that is \eqref{eqn:nlSWE2D}--\eqref{eq:initial-condition}.  If the boundary operator $\mathcal{B}\mathbf{q}=0$  is linearly consistent and linearly stable, then for every compactly supported and smooth initial data $\widetilde{\mathbf{q}}_0=[\widetilde{h}_0,\widetilde{u}_0,\widetilde{v}_0]^T$ there exists a unique solution $\widetilde{\mathbf{q}} = [\widetilde{h}, \widetilde{u}, \widetilde{v}]^T$
    that satisfies the estimate,
$$
\|\widetilde{\mathbf{q}}\|_W \leq \|\widetilde{\mathbf{q}}_0\|_W, \quad \forall \, t \in [0, T].
$$
\end{theorem}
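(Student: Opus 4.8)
The plan is to reduce Theorem \ref{theo:exitenceofuniquesmoothsolution} to the already-established linear well-posedness result, Theorem \ref{theo:well-posdeness-ibvp}, together with a standard local-existence argument for quasilinear symmetric hyperbolic systems. First I would observe that writing $\mathbf{q} = [H+\widetilde h, U+\widetilde u, V+\widetilde v]^T$ and substituting into \eqref{eqn:nlSWE2D} produces an evolution equation for the perturbation $\widetilde{\mathbf{q}} = [\widetilde h,\widetilde u,\widetilde v]^T$ of the form $\partial_t \widetilde{\mathbf{q}} + \mathcal{A}(\widetilde{\mathbf{q}})\partial_x\widetilde{\mathbf{q}} + \mathcal{B}(\widetilde{\mathbf{q}})\partial_y\widetilde{\mathbf{q}} + \mathcal{C}\widetilde{\mathbf{q}} = \mathbf{0}$, where $\mathcal{A}(\mathbf{0}) = A$, $\mathcal{B}(\mathbf{0}) = B$, $\mathcal{C} = C$ are exactly the linear coefficient matrices \eqref{eq-m-1}, and where the coefficient matrices are symmetrizable by the same positive definite weight $W$ from \eqref{eqn:L2norm} (this is precisely why the elemental energy $e = \mathbf{q}^T W\mathbf{q}$ is a convex entropy for subcritical flows). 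Thus the nonlinear IBVP is a quasilinear symmetric-hyperbolic IBVP whose linearization about the constant state is the linear IBVP analyzed in Section \ref{sec:linear-IBVP-RSWE}.

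Next I would invoke the hypothesis of linear consistency (Definition \ref{def:linear-consistency}) and linear stability (Definition \ref{def:linear-stability}): linear consistency guarantees the nonlinear boundary operator prescribes exactly the number of scalar conditions dictated by Table \ref{tab:num_bound} (two at inflow, one at outflow), so the boundary conditions are neither over- nor under-determined and the Lopatinski/uniform Kreiss condition is met at the linearized level; linear stability guarantees that the linearization of $\mathcal{B}\mathbf{q}$ falls within the admissible family \eqref{eq:BC-sub-critical_inflow}--\eqref{eq:BC-sub-critical_outflow} with reflection coefficient $\gamma$ satisfying $0\le\gamma^2\le-\lambda_1/\lambda_2$, hence $\mathrm{BT}\le 0$ for the linearized problem by Lemma \ref{Lem:BC-Sub-critical flow}, and the linearized IBVP is maximally semi-bounded and well-posed by Theorem \ref{theo:well-posdeness-ibvp}. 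Then I would cite the standard theory of local-in-time existence and uniqueness for quasilinear symmetric hyperbolic IBVPs satisfying the uniform Kreiss--Lopatinski condition (e.g., the results of Majda--Osher, Rauch--Massey, Kreiss, or Gues, and more recent energy-method treatments): because the linearized problem is uniformly well-posed and the coefficients depend smoothly on $\widetilde{\mathbf{q}}$, a fixed-point/iteration argument in a high-order Sobolev space $H^s(\Omega)$ with $s$ large enough produces a unique smooth solution $\widetilde{\mathbf{q}}$ on some time interval $[0,T]$ for compactly supported smooth initial data $\widetilde{\mathbf{q}}_0$ (the compact support ensures the perturbation does not interact with the far field and that compatibility conditions at the corners $\partial\Omega\times\{0\}$ hold trivially). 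Finally, for the quantitative estimate I would run the nonlinear energy argument directly: differentiating $\|\widetilde{\mathbf{q}}\|_W^2 = \int_\Omega e\, d\Omega$ in time and using \eqref{eq:energy_conservation_2} gives $\tfrac{d}{dt}\|\widetilde{\mathbf{q}}\|_W^2 = \mathrm{BT}$ with $\mathrm{BT} = -\oint_{\partial\Omega} G F_n\, dS$, and the linearly-stable homogeneous nonlinear BC is designed (in the following subsection, by the authors' own construction of nonlinear Riemann/mass-flux/Bernoulli BCs) so that $\mathrm{BT}\le 0$; hence $\|\widetilde{\mathbf{q}}(t)\|_W$ is nonincreasing and $\|\widetilde{\mathbf{q}}\|_W\le\|\widetilde{\mathbf{q}}_0\|_W$ for all $t\in[0,T]$.

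The main obstacle is the local existence/uniqueness step: rigorously deducing smooth solvability of the \emph{nonlinear} IBVP from well-posedness of its linearization requires the full machinery of the Kreiss--Lopatinski theory for characteristic or noncharacteristic hyperbolic boundary value problems, including verification that the boundary is (uniformly) noncharacteristic or, if characteristic (note $\lambda_3 = U_n$ vanishes when $U_n=0$), that the structural conditions for characteristic boundaries hold, plus the propagation of higher-order compatibility conditions. In the interest of a clean exposition I would state this as an appeal to the established theory — citing the relevant references and noting that linear consistency plus linear stability are precisely the hypotheses that make that theory applicable — rather than reproving it; the energy estimate $\|\widetilde{\mathbf{q}}\|_W\le\|\widetilde{\mathbf{q}}_0\|_W$ is then the easy part, following verbatim the linear computation with the nonlinear boundary term shown to be nonpositive.
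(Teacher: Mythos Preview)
Your proposal over-reads the theorem. The paper's own proof is a single sentence: ``The proof can be adapted from the linear analysis performed in the last section, in particular, from the proof of Theorem \ref{theo:well-posdeness-ibvp}.'' That is, the authors intend $\widetilde{\mathbf{q}}$ to be the solution of the \emph{linearized} IBVP about the constant state $[H,U,V]^T$, not a solution of the full nonlinear problem. This reading is consistent with (i) the use of the constant weight $W$ in the estimate (the linear energy norm \eqref{eqn:L2norm}, not the nonlinear entropy \eqref{eq:elemental}), (ii) the one-line proof, and (iii) the sentence immediately following the theorem, which says the result ``can be used to establish the well-posedness of a nonlinear IBVP'' --- i.e., the nonlinear step is deferred. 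With that reading, linear consistency plus linear stability place the linearized boundary operator squarely in the hypotheses of Lemma~\ref{Lem:BC-Sub-critical flow} and hence Theorem~\ref{theo:well-posdeness-ibvp}, and the estimate drops out immediately; no Kreiss--Lopatinski machinery, Majda--Osher theory, or fixed-point iteration is needed.

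Your more ambitious route --- proving local-in-time existence for the genuinely nonlinear IBVP --- is a reasonable research program but, as written, has two concrete gaps relative to the stated conclusion. First, you equate $\|\widetilde{\mathbf{q}}\|_W^2$ with $\int_\Omega e\,d\Omega$ for the nonlinear elemental energy $e=\tfrac12 h|\mathbf{u}|^2+\tfrac12 g h^2$; these are not the same quantity (one is quadratic in the perturbation with constant coefficients, the other is cubic in the full state), so the nonlinear energy identity \eqref{eq:energy_conservation_2} does not deliver the claimed $W$-norm bound. Second, you assert $\mathrm{BT}\le 0$ for the nonlinear boundary term by appealing to the ``linearly-stable homogeneous nonlinear BC,'' but the theorem only assumes linear consistency and linear stability; nonlinear entropy stability ($\mathrm{BT}\le 0$ in \eqref{eq:energy_conservation_2}) is a separate hypothesis (Lemma~\ref{Lem:BC-Sub-critical-nonlinear-flow}) that is not part of the statement here. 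So even granting the heavy existence machinery, the quantitative estimate you derive is not the one claimed.
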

\begin{proof}
    The proof can be adapted from the linear analysis  perform in the last section, in particular, from the proof of Theorem \ref{theo:well-posdeness-ibvp}.
\end{proof}
Theorem \ref{theo:exitenceofuniquesmoothsolution} can be used to establish the well-posedness of a nonlinear IBVP for sufficiently regular initial and boundary data and finite time $t \in [0, T]$.
\subsection{Stable nonlinear BCs.}
We will now formulate nonlinearly stable and linearly consistent BCs for the nonlinear vector invariant RSWE \eqref{eqn:nlSWE2D}. The discussion here is inspired by the 1D result and analysis performed in \cite{HEW2025113624}.
Let $F_n$  and  $G_n$ denote the nonlinear normal mass flux and Bernoulli's potential defined in \eqref{eq:boundary-fields}.
For $u_n< 0$ we formulate the inflow BC
\begin{eqnarray}\label{eq:BC-sub-critical_nonlinear_inflow}
\mathcal{B}\textbf{q}=
 \begin{cases}
     \alpha G_n - \beta F_n= d_1,   \\
    u_s = d_2,
   \\
    \end{cases}
    \ \text{if} \ u_n < 0.
    \end{eqnarray}
    When $u_n \ge 0$ we have the outflow BC
    \begin{eqnarray}\label{eq:BC-sub-critical_nonlinear_outflow}
\mathcal{B}\textbf{q}=
    \alpha G_n - \beta F_n= d_1,   \quad  \text{if} \ u_n \ge  0.
    \end{eqnarray}
Note that by construction the nonlinear BCs \eqref{eq:BC-sub-critical_nonlinear_inflow}--\eqref{eq:BC-sub-critical_nonlinear_outflow} are linearly consistent, that is there are two boundary conditions at the inflow boundary, with $u_n<0$, and one boundary condition at the outflow boundary, with $u_n\ge 0$.
Here the parameters $\alpha$ and $\beta$ are real and nonlinear weights which will be determined by ensuring entropy stability. The following Lemma constraints the boundary parameters $\alpha$ and $\beta$.
\begin{lemma}\label{Lem:BC-Sub-critical-nonlinear-flow}
    Consider the nonlinear boundary term $\operatorname{BT}$ defined in \eqref{eq:energy_conservation_2} and the BCs \eqref{eq:BC-sub-critical_nonlinear_inflow}--\eqref{eq:BC-sub-critical_nonlinear_outflow} with homogeneous boundary data $\mathbf{d} =0$. For sub-critical flows $ 0\le \operatorname{Fr} = |\mathbf{u}|/\sqrt{gh} < 1$, if  $\alpha\ge 0$, $\beta\ge0$, and $|\alpha| + |\beta| >0$  then the boundary term is never positive, that is $\mathrm{BT} \le 0$.
\end{lemma}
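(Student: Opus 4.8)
## Proof proposal for Lemma \ref{Lem:BC-Sub-critical-nonlinear-flow}

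The plan is to substitute the homogeneous BCs directly into the boundary term $\mathrm{BT} = -\oint_{\partial\Omega} G F_n\, dS$ using the decomposition $GF_n = F_n G_n + \tfrac12 h u_n u_s^2$ from \eqref{eq:energy_conservation_2}, and show the integrand is pointwise nonnegative. I would split the boundary $\partial\Omega$ into its inflow part ($u_n<0$) and outflow part ($u_n\ge 0$) and treat each separately.

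First, on the outflow boundary, where $u_n\ge 0$ and only the single BC $\alpha G_n - \beta F_n = 0$ is imposed: the $u_s$ term contributes $-\tfrac12 h u_n u_s^2 \le 0$ to $\mathrm{BT}$ since $h>0$ and $u_n\ge0$, so that piece is already favorable. For the remaining piece $-F_n G_n$, I would use the BC to relate $F_n$ and $G_n$. When $\alpha>0$ we have $G_n = (\beta/\alpha)F_n$, so $-F_nG_n = -(\beta/\alpha)F_n^2 \le 0$ since $\beta\ge0$. When $\alpha=0$ (forcing $\beta>0$), the BC reads $F_n=0$, and then $GF_n = 0$ identically. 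So $\mathrm{BT}\le 0$ on the outflow part. I should also note $G_n = \tfrac12 u_n^2 + gh^2 > 0$ always, which is useful for sign bookkeeping.

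Next, on the inflow boundary, where $u_n<0$ and both BCs hold: the second BC $u_s=0$ kills the $\tfrac12 hu_nu_s^2$ term entirely, so the integrand reduces to $-F_nG_n$. Again use the first BC: if $\alpha>0$ then $-F_nG_n = -(\beta/\alpha)F_n^2\le 0$; if $\alpha=0$ then $F_n=0$ and the term vanishes. Hence $\mathrm{BT}\le 0$ on the inflow part as well, and combining the two pieces gives $\mathrm{BT}\le 0$ over all of $\partial\Omega$.

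The subcriticality hypothesis $\mathrm{Fr}<1$ does not actually enter the pointwise sign argument above — it is already built into the choice of \emph{how many} BCs to impose (two at inflow, one at outflow), which is what makes the $u_s^2$ term controllable at outflow and removable at inflow. The only genuine subtlety, and the step I expect to require the most care, is handling the degenerate case $\alpha = 0$ cleanly (where the BC collapses to $F_n=0$ and one cannot divide by $\alpha$), and making sure the argument covers $u_n=0$ at outflow, where $F_n = h u_n = 0$ automatically so $GF_n=0$ regardless. Everything else is a direct sign check once the BCs are substituted, so I would not belabor the algebra.
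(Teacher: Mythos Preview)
Your proposal is correct and follows essentially the same approach as the paper: split $\partial\Omega$ into inflow and outflow, use $u_s=0$ at inflow to eliminate the tangential term and $u_n\ge 0$ at outflow to sign it, then substitute $G_n=(\beta/\alpha)F_n$ when $\alpha>0$ to get $F_nG_n=(\beta/\alpha)F_n^2\ge 0$, treating the degenerate case $\alpha=0$ (hence $F_n=0$) separately. Your remark that the subcriticality assumption plays no role in the pointwise sign argument itself is a valid observation that the paper does not make explicit.
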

\begin{proof}
Note that if  $\alpha G_n - \beta F_n=0$ with $\beta=0$, $\alpha>0$ or $\alpha =0$, $\beta>0$ then $F_nG_n=0$. 
We consider first an inflow boundary with $u_n<0$ and the BC \eqref{eq:BC-sub-critical_nonlinear_inflow}. So if $u_s =0$ and $\beta=0$, $\alpha>0$ or $\alpha =0$, $\beta>0$, then we have $\mathrm{BT}=0$.
Now assume that $\alpha>0$ and $\beta \ge 0$, then we have $G_n = (\beta/\alpha) F_n \implies F_nG_n = (\beta/\alpha) F_n^2 \ge 0$. Thus for the inflow BC \eqref{eq:BC-sub-critical_nonlinear_inflow}, we have 
$$
\mathrm{BT}:= -\oint_{\partial \Omega} \left(F_nG_n + \frac{1}{2} hu_n u_s^2\right) dS=  -\oint_{\partial \Omega} \frac{\beta}{\alpha} |F_n|^2 dS\le 0.
$$
We now turn our attention to the outflow BC \eqref{eq:BC-sub-critical_nonlinear_outflow} with $u_n\ge 0$.
Again if $\beta=0$ or $\alpha =0$ then $F_nG_n=0$, and we have 
$$
\mathrm{BT}:= -\oint_{\partial \Omega} \left(F_nG_n + \frac{1}{2} hu_n u_s^2\right) dS=  -\oint_{\partial \Omega}\left(\frac{1}{2} hu_n u_s^2\right)dS\le 0.
$$
Finally, if $\alpha>0$ and $\beta \ge 0$ then  the outflow BC \eqref{eq:BC-sub-critical_nonlinear_outflow} with $u_n \ge 0$ gives
$$
\mathrm{BT}:= -\oint_{\partial \Omega} \left(F_nG_n + \frac{1}{2} hu_n u_s^2\right) dS=  -\oint_{\partial \Omega}\left( \frac{\beta}{\alpha} |F_n|^2 + \frac{1}{2} hu_n u_s^2\right)dS\le 0.
$$
The proof is complete.
\end{proof}
By using Lemma \ref{Lem:BC-Sub-critical-nonlinear-flow} we can proof the following theorem which ensures the energy/entropy stability of the nonlinear IBVP for the nonlinear vector invariant RSWE \eqref{eqn:nlSWE2D} at subcritical flows.
\begin{theorem}\label{theo:energy-stability-nonlinear-ibvp}
Consider the nonlinear IBVP defined by the nonlinear vector invariant RSWE \eqref{eqn:nlSWE2D} at subcritical flows subject to the BCs $\mathcal{B}\mathbf{q}=\mathbf{d}$,  and the initial condition \eqref{eq:initial-condition}. Let the nonlinear boundary operator  $\mathcal{B}\mathbf{q} $ be given by \eqref{eq:BC-sub-critical_nonlinear_inflow}--\eqref{eq:BC-sub-critical_nonlinear_outflow}, with homogeneous boundary data $\mathbf{d} =0$. If  $\alpha\ge 0$, $\beta\ge0$, and $|\alpha|^2 + |\beta|^2 >0$  then 
$$
\frac{d}{dt}E(t) =\mathrm{BT} \le 0 \iff E(t) \le E(0), \quad \forall\, t\ge0.
$$
\end{theorem}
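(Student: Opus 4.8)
The plan is to derive the estimate directly from the energy identity \eqref{eq:energy_conservation_2} and the boundary bound supplied by Lemma \ref{Lem:BC-Sub-critical-nonlinear-flow}. First I would recall that, by the continuity equation \eqref{eq:energy_continuity} and its integrated form \eqref{eq:energy_conservation}, the total energy/entropy $E(t)=\int_\Omega e\,d\Omega$ satisfies the exact identity $\frac{d}{dt}E(t)=\mathrm{BT}=-\oint_{\partial\Omega} G F_n\, dS$, and that this boundary term admits the decomposition $G F_n = F_n G_n + \tfrac12 h u_n u_s^2$ given in \eqref{eq:energy_conservation_2}. This reduces the whole claim to controlling the sign of $\mathrm{BT}$.

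Next I would invoke Lemma \ref{Lem:BC-Sub-critical-nonlinear-flow}: since the boundary operator $\mathcal{B}\mathbf{q}$ is taken to be \eqref{eq:BC-sub-critical_nonlinear_inflow}--\eqref{eq:BC-sub-critical_nonlinear_outflow} with homogeneous data $\mathbf{d}=0$, and since the hypotheses $\alpha\ge 0$, $\beta\ge 0$, $|\alpha|^2+|\beta|^2>0$ are exactly the hypotheses of that lemma (the condition $|\alpha|^2+|\beta|^2>0$ being equivalent to $|\alpha|+|\beta|>0$), the lemma directly yields $\mathrm{BT}\le 0$ for all $t\ge 0$ and all subcritical states $\mathrm{Fr}=|\mathbf{u}|/\sqrt{gh}<1$. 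Hence $\frac{d}{dt}E(t)\le 0$ for every $t\ge0$.

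Finally I would integrate this differential inequality in time: $\frac{d}{dt}E(t)\le 0$ on $[0,t]$ gives $E(t)-E(0)=\int_0^t \frac{d}{ds}E(s)\,ds\le 0$, i.e. $E(t)\le E(0)$ for all $t\ge0$, which is the stated energy/entropy bound. The converse implication (the ``$\Longleftarrow$'' direction of the iff) is immediate, since $E(t)\le E(0)$ together with the identity $\frac{d}{dt}E(t)=\mathrm{BT}$ and $E(0)$ being the maximum of a non-increasing trajectory forces $\mathrm{BT}=\frac{d}{dt}E(t)\le 0$; one can also simply note that both statements are equivalent to the monotonicity of $E$.

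Essentially all the substance of this theorem has already been placed in Lemma \ref{Lem:BC-Sub-critical-nonlinear-flow} and in the energy identity, so the only ``obstacle'' is bookkeeping: making sure the hypotheses of the theorem match the hypotheses of the lemma verbatim (in particular that the nonlinear $G_n$ appearing in \eqref{eq:boundary-fields} and the decomposition $GF_n=F_nG_n+\tfrac12 h u_n u_s^2$ are used consistently, noting the slight abuse that $G_n$ in \eqref{eq:boundary-fields} is written as $\tfrac12 u_n^2+gh^2$ rather than $\tfrac12|\mathbf{u}|^2+gh$), and that the subcriticality assumption is genuinely used where the lemma needs it, namely to guarantee $h>0$ so that $F_n G_n=(\beta/\alpha)F_n^2\ge 0$ has the claimed sign. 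So the proof is a two-line application of the preceding lemma plus Grönwall-type integration, and I would present it as such.
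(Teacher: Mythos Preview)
Your proposal is correct and follows exactly the same approach as the paper: invoke the energy identity $dE/dt=\mathrm{BT}$ from \eqref{eq:energy_conservation}--\eqref{eq:energy_conservation_2}, apply Lemma~\ref{Lem:BC-Sub-critical-nonlinear-flow} to conclude $\mathrm{BT}\le 0$, and integrate in time. The paper's own proof is the two-line version you anticipated; your additional remarks on the converse direction and the equivalence $|\alpha|^2+|\beta|^2>0\Leftrightarrow|\alpha|+|\beta|>0$ are correct elaborations not spelled out there.
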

\begin{proof}
    The proof follows from the evolution equation of the total energy/entropy $E(t)$ given by \eqref{eq:energy_conservation}  and \eqref{eq:energy_conservation_2}, that is $dE(t)/dt = \mathrm{BT}$. Subsequently, Lemma \ref{Lem:BC-Sub-critical-nonlinear-flow} ensures $\mathrm{BT} \le 0$ and finally time integration completes the proof.   
\end{proof}
Theorem \ref{theo:energy-stability-nonlinear-ibvp} establishes the stability of the solutions of the nonlinear RSWE IBVP  at subcritical flows, and seems to be analogous to Theorem \ref{theo:well-posdeness-ibvp} which proves the well-posedness of the linear IBVP. However, unlike the nonlinear analogue Theorem  \ref{theo:energy-stability-nonlinear-ibvp}, the linear result Theorem  \ref{theo:well-posdeness-ibvp} is comprehensive and establishes the existence and stability of a unique solution for the linear IBVP.

For the nonlinear IBVP, Theorem \ref{theo:exitenceofuniquesmoothsolution}, can be used to establish the existence and stability of a unique smooth solution for a reasonably regular initial and boundary data. However, Theorem \ref{theo:exitenceofuniquesmoothsolution} relies on the linear theory, Theorem  \ref{theo:well-posdeness-ibvp}, and will require the nonlinear boundary operator $\mathcal{B}\mathbf{q}$ to be  linearly consistent and linearly stable, see Definitions \ref{def:linear-consistency}--\ref{def:linear-stability}. As above, we will give some physically relevant examples of nonlinear BCs and use Theorem  \ref{theo:energy-stability-nonlinear-ibvp} to establish nonlinear stability.
{\example[Nonlinear Riemann invariants]{
We consider the nonlinear Riemann invariants, which are natural carriers of information in the system, and use them to formulate well-posed BCs. Here the nonlinear Riemann invariants on the boundaries are
\begin{align}
  r_1:= 2\sqrt{gh} + u_n, \quad r_2 := 2\sqrt{gh} - u_n,  \quad r_3 := u_s.  
\end{align}
Using the nonlinear Riemann invariants we will now formulate the nonlinearly stable and linearly consistent BCs for the nonlinear vector invariant RSWE \eqref{eqn:nlSWE2D}.
For $u_n< 0$ the inflow BC is given by
\begin{eqnarray}\label{eq:BC-Riemann_nonlinear_inflow}
\mathcal{B}\textbf{q}=
 \begin{cases}
   r_2 := 2\sqrt{gh} - u_n =d_1 \iff   \alpha G_n - \beta F_n= d_1,   \\
   r_3 := u_s = d_2,
   \\
    \end{cases}
    \ \text{if} \ u_n < 0.
    \end{eqnarray}
    When $u_n \ge 0$ we have the outflow BC
    \begin{eqnarray}\label{eq:BC-Riemann_nonlinear_outflow}
\mathcal{B}\textbf{q}=
    r_2 := 2\sqrt{gh} - u_n =d_1 \iff   \alpha G_n - \beta F_n= d_1,   \quad  \text{if} \ u_n \ge  0.
    \end{eqnarray}
Here the nonlinear coefficients $\alpha$, $\beta$ are
\begin{align}
    \alpha = \frac{2}{c}>0, \quad \beta = \frac{c+u_n}{hc} =\frac{1}{h} \left(1 +\frac{u_n}{c} \right), \quad c =\sqrt{gh} >0.
\end{align}
For subcritical flows  with $|u_n|/c  < \operatorname{Fr}= |\vb{u}|/c < 1$ we must have $\alpha, \beta > 0$.  The nonlinear boundary operator $\mathcal{B}\textbf{q}$ defined by \eqref{eq:BC-Riemann_nonlinear_inflow}--\eqref{eq:BC-Riemann_nonlinear_outflow} satisfy both Lemma \ref{Lem:BC-Sub-critical-nonlinear-flow} and Theorem \ref{theo:energy-stability-nonlinear-ibvp}. Therefore the nonlinear Riemann invariant BCs \eqref{eq:BC-Riemann_nonlinear_inflow}--\eqref{eq:BC-Riemann_nonlinear_outflow} are entropy stable at both the inflow  and outflow  boundaries.
By construction the BCs \eqref{eq:BC-Riemann_nonlinear_inflow}--\eqref{eq:BC-Riemann_nonlinear_outflow} are linearly consistent . In Example 1, we have shown that  the linearized boundary operator, the linear Riemann invariant, is  stable at both the inflow ($u_n <0$) and outflow ($u_n \ge 0$) boundaries. Thus the nonlinear BCs \eqref{eq:BC-Riemann_nonlinear_inflow}--\eqref{eq:BC-Riemann_nonlinear_outflow} are also linearly stable. We can then invoke Theorem \ref{theo:exitenceofuniquesmoothsolution} to prove the existence and stability of a unique smooth solution for sufficiently regular initial and boundary data and  a sufficiently short time interval.
}}
{\example[Nonlinear mass flux]{
The mass flux can be used to control how mass and materials are transported across the boundaries of the computational domain. For example the no mass flux BC can used to ensure that no mass is transported across the boundary. The nonlinear mass flux corresponds to setting $\alpha =0$ and $\beta = 1$ in  \eqref{eq:BC-sub-critical_nonlinear_inflow}--\eqref{eq:BC-sub-critical_nonlinear_outflow}, which gives the mass flux equation $-u_nh = d_1$, where $d_1$ is the mass flux data for the boundary. For $u_n<0$ we formulate the inflow  mass flux BC
\begin{eqnarray}\label{eq:BC-sub-critical_massflux_nonlinear-inflow}
\mathcal{B}\textbf{q}:=
 \begin{cases}
     -u_nh = d_1, \\
    u_s = d_2,
    \end{cases}
    \ \text{if} \ u_n < 0.
    \end{eqnarray}
    When $u_n\ge 0$  the outflow mass flux BC is
    \begin{eqnarray}\label{eq:BC-sub-critical_massflux_nonlinear-outflow}
\mathcal{B}\textbf{q}:= -u_nh = d_1 ,
    \ \text{if} \ u_n \ge  0.
    \end{eqnarray}
     Thus the nonlinear boundary operator $\mathcal{B}\textbf{q}$ defined by the nonlinear mass flux BCs \eqref{eq:BC-sub-critical_massflux_nonlinear-inflow}--\eqref{eq:BC-sub-critical_massflux_nonlinear-outflow} satisfy Lemma \ref{Lem:BC-Sub-critical-nonlinear-flow} and Theorem \ref{theo:energy-stability-nonlinear-ibvp}. Therefore the nonlinear mass flux BCs \eqref{eq:BC-sub-critical_massflux_nonlinear-inflow}--\eqref{eq:BC-sub-critical_massflux_nonlinear-outflow} are entropy stable at both the inflow  and outflow  boundaries. As above, by construction the BCs \eqref{eq:BC-sub-critical_massflux_nonlinear-inflow}--\eqref{eq:BC-sub-critical_massflux_nonlinear-outflow} are linearly consistent.
    In Example 2, we have shown that  the linearized boundary operators, linear mass flux, are stable at the inflow boundary and unstable at the outflow boundary. 
    Therefore the nonlinear inflow BC \eqref{eq:BC-sub-critical_massflux_nonlinear-inflow} is linearly stable and the nonlinear outflow BC \eqref{eq:BC-sub-critical_massflux_nonlinear-outflow} is linearly unstable.
    It is significantly noteworthy that the nonlinear IBVP, \eqref{eqn:nlSWE2D} with \eqref{eq:BC-sub-critical_massflux_nonlinear-inflow}--\eqref{eq:BC-sub-critical_massflux_nonlinear-outflow}, is  nonlinearly  energy/entropy stable and supports more general solutions at the inflow boundary with $u_n<0$  and at the outflow boundary with $u_n \ge 0$. 
}}
{\example[Nonlinear Bernoulli's principle]{
Bernoulli's principle can be used to enforce how pressure or energy is transmitted across the boundaries of the computational domain. The nonlinear Bernoulli's boundary condition corresponds to setting $\alpha =1$ and $\beta = 0$ in  \eqref{eq:BC-sub-critical_nonlinear_inflow}--\eqref{eq:BC-sub-critical_nonlinear_outflow} which gives the Bernoulli's equation $\frac{1}{2}u_n^2 + gh = d_1 $ where $d_1$ is the pressure energy data for the boundary. 
For $u_n<0$ we formulate the inflow  Bernoulli's  BC
\begin{eqnarray}\label{eq:BC-sub-critical_velocity-flux_nonlinear-inflow}
\mathcal{B}\textbf{q}:=
 \begin{cases}
     \frac{1}{2}u_n^2 + gh = d_1 \\
    u_s = d_2,
    \end{cases}
    \ \text{if} \ u_n < 0.
    \end{eqnarray}
    When $u_n\ge 0$ the outflow velocity flux BC is
    \begin{eqnarray}\label{eq:BC-sub-critical_velocity-flux_nonlinear-outflow}
\mathcal{B}\textbf{q}:=
    \frac{1}{2}u_n^2 + gh = d_1 ,
    \ \text{if} \ u_n \ge  0.
    \end{eqnarray}
     The nonlinear boundary operator $\mathcal{B}\textbf{q}$ defined by the nonlinear Bernoulli's  BC \eqref{eq:BC-sub-critical_velocity-flux_nonlinear-inflow}--\eqref{eq:BC-sub-critical_velocity-flux_nonlinear-outflow} satisfies Lemma \ref{Lem:BC-Sub-critical-nonlinear-flow} and Theorem \ref{theo:energy-stability-nonlinear-ibvp}. Therefore the nonlinear Bernoulli's BCs \eqref{eq:BC-sub-critical_velocity-flux_nonlinear-inflow}--\eqref{eq:BC-sub-critical_velocity-flux_nonlinear-outflow} are entropy stable at both the inflow  and outflow  boundaries. As above, by construction the BCs \eqref{eq:BC-sub-critical_velocity-flux_nonlinear-inflow}--\eqref{eq:BC-sub-critical_velocity-flux_nonlinear-outflow} are linearly consistent.
    In Example 3, we have shown that  the linearized boundary operators, linear Bernoulli's principle, are stable at the inflow and unstable at the outflow.
    It is also noteworthy that the nonlinear IBVP, \eqref{eqn:nlSWE2D} with \eqref{eq:BC-sub-critical_velocity-flux_nonlinear-inflow}--\eqref{eq:BC-sub-critical_velocity-flux_nonlinear-outflow}, is  nonlinearly  energy/entropy stable and supports more general solutions at the inflow boundary with $u_n<0$  and at the outflow boundary with $u_n \ge 0$.
}}
\subsection{Effects of the  Coriolis force term on remote boundary data}\label{sec:EffectsofCoriolis}
The RSWE incorporate the effects of a rotating frame of reference, primarily by including the Coriolis force term $f_c\ne 0$ in the momentum equations, which is absent in the standard SWE. While the Coriolis force may not directly influence the well-posedness or stability of the nonlinear IBVP solutions, for nonzero background flow velocities, this term can significantly modify the remote boundary data. Failure to account for the Coriolis effect in such cases may result in substantial mismatches, leading to errors that can contaminate the entire solution. We will now analyze the impact of the Coriolis force on the boundary data.

%
To begin, at $t=0$, for the nonlinear RSWE IBVP we consider the initial data
$$
\vb{q}_0(x,y) = [H+\widetilde{h}_0(x,y), U+\widetilde{u}_0(x,y), V+\widetilde{v}_0(x,y)]^\top, \quad (x,y) \in \Omega,
$$
where $H, \mathbf{U} = [U, V]^T$ are constant background states and $[\widetilde{h}_0(x,y), \widetilde{u}_0(x,y), \widetilde{v}_0(x,y)]^\top$ are  local smooth perturbations of the constant  background states which are compactly supported in $(x,y) \in \Omega$. In particular, the perturbations and their derivatives vanish completely at the  boundaries $\partial \Omega$ and at the far-field. We introduce the remote data ${H}_{\infty}, \mathbf{U}_{\infty}$ which is spatially invariant in $\Omega$ and satisfies the initial condition ${H}_{\infty}(0)=H, \mathbf{U}_{\infty}(0) = \mathbf{U}$.
The remote data solves the ordinary differential equation (ODE)
\begin{equation}\label{eq:remote-ODE}
    \begin{cases}
        \frac{d {H}_{\infty}}{d t} =0,\quad {H}_{\infty}(0) ={H},
        \\
 \frac{d \mathbf{U}_{\infty}}{d t} + f_c \mathbf{U}_{\infty}^\perp = \mathbf{0}, \quad \mathbf{U}_{\infty}(0) = \mathbf{U}= [U, V]^\top.
    \end{cases}
\end{equation}
Note that ${H}_{\infty}(t) = {H}$ for all $t \ge 0$, and  when $f_c=0$, we also have $\mathbf{U}_{\infty}(t) = \mathbf{U}$ for all $t \ge 0$.
Therefore for the standard nonlinear SWE with $f_c=0$ the remote velocity data does not change with time.
However for the nonlinear RSWE, when $f_c \ne 0$, the remote velocity data is given by the solution of the ODE \eqref{eq:remote-ODE}, 
$$
{H}_{\infty}(t) = {H}, \quad U_{\infty}(t) = U\cos(f_ct) + V\sin(f_ct), 
\quad
V_{\infty}(t) = V\cos(f_ct) - U\sin(f_ct).
$$
Thus, if the background flow velocity is zero, $\vb{U} = 0$, we also have a zero remote flow velocity data, $\vb{U}_{\infty}= \vb{U}=0$, for all $t\ge 0$. Similarly, for the standard nonlinear SWE with $f_c=0$, we again have $\mathbf{U}_{\infty}(t) = \mathbf{U}$ for all $t \ge 0$.

When the background flow velocity is nonzero, $\vb{U} \ne 0$, we introduce the rotated variables
$$
\overline{U}_n(t) = n_xU_{\infty}(t) + n_yV_{\infty}(t), 
\quad 
\overline{U}_s(t) = n_yU_{\infty}(t) - n_xV_{\infty}(t),
$$
where $\vb{n} = [n_x, n_y]^\top$ is the unit normal vector on the boundary, and $\overline{U}_n(t)$ is the normal component and $\overline{U}_s(t)$ is the tangential component of the remote velocity data.

To ensure consistency and avoid mismatch of boundary data, the boundary operator must be  matched to the remote boundary data. For instance, the nonlinear boundary operator $\mathcal{B}\textbf{q} = \textbf{d}$ defined by nonlinear Riemann invariants \eqref{eq:BC-Riemann_nonlinear_inflow}--\eqref{eq:BC-Riemann_nonlinear_outflow} gives
\begin{equation}\label{eq:BC-Riemann_nonlinear_data}
\begin{cases}
    2\sqrt{gh} - u_n =d_1 = 2\sqrt{gH} - \overline{U}_n, \\
    u_s = d_2  = \overline{U}_s.
\end{cases}
\end{equation}
For  the nonlinear mass flux BC \eqref{eq:BC-sub-critical_massflux_nonlinear-inflow}--\eqref{eq:BC-sub-critical_massflux_nonlinear-outflow} we have
\begin{equation}\label{eq:BC-sub-critical_mass-flux_nonlinear-data}
\begin{cases}
-u_nh = d_1 = - \overline{U}_nH,\\
 u_s = d_2  = \overline{U}_s.
 \end{cases}
\end{equation}
The nonlinear Bernoulli's  BC \eqref{eq:BC-sub-critical_velocity-flux_nonlinear-inflow}--\eqref{eq:BC-sub-critical_velocity-flux_nonlinear-outflow} gives
\begin{equation}\label{eq:BC-sub-critical_velocity-flux_nonlinear-data}
\begin{cases}
\frac{1}{2}u_n^2 + gh =   d_1 = \frac{1}{2}\overline{U}_n^2 + gH,\\
 u_s = d_2  = \overline{U}_s.
 \end{cases}
\end{equation}
In the next section we will derive numerical approximations of the linear and nonlinear IBVPs and prove numerical stability.

\section{Numerical method and analysis}\label{sec:numerical-analysis}
We will now present the numerical methods to solve the linear and nonlinear IBVPs on geometrically complex spatial domains, $(x,y)\in \Omega$. We will introduce a structure preserving curvilinear grid transformation that maps the PDE system from the physical domain $(x,y)\in \Omega$ to a reference unit square $(q,r)\in [0, 1]^2$. We will use SBP finite difference operators \cite{BStrand1994,kreiss1974finite,svard2014review,fernandez2014review} to approximate the spatial derivatives in the reference domain. The BCs will be implemented weakly using SAT \cite{carpenter1994time,Mattsson2003}, and we choose penalty parameters such that the semi-discrete approximations satisfy energy estimates analogous to the continuous energy estimates.

\subsection{Structure preserving curvilinear transformation}
We assume that $\Omega$ is geometrically complex but sufficiently smooth such that there is an invertible map between $(x,y)\in \Omega$ and the unit square $(q,r)\in [0, 1]^2$, that is $(x,y) \leftrightarrow (q, r)$. Given a 2D scalar field $u(x,y)$, the  following curvilinear transformation identities hold for the partial derivatives
\begin{align}\label{eq:version1}
 u_x = q_x u_q + r_xu_r, \quad  u_y = q_y u_q + r_yu_r,
\end{align}
and
\begin{align}\label{eq:version2}
 u_x = \frac{1}{J} \left((Jq_x u)_q + (Jr_xu)_r\right), \quad  u_y = \frac{1}{J} \left((Jq_y u)_q + (Jr_yu)_r\right).
\end{align}
The subscripts denote partial derivatives. Here $q_x, r_x, q_y, r_y$ are metric derivatives and $J>0$ is the Jacobian of the curvilinear transformation given by the metric relations
\begin{align}
Jq_x = y_r, \quad Jr_x = -y_q, \quad Jq_y = -x_r, \quad Jr_y = x_q, \quad J = x_qy_r-x_ry_q >0.
\end{align}
The two identities, the non-conservative transformation \eqref{eq:version1} and the conservative transformation \eqref{eq:version2}, are equivalent at the continuous level.
However, they will give different discrete operators when discretized and discrete approximations of spatial derivatives are introduced.
Indeed, for  stable and consistent discrete approximations the  discrete derivative operators will converge to the continuous derivative operators.
\begin{definition}\label{def:structure-preserving}
    A curvilinear transformation of the nonlinear  RSWE \eqref{eqn:nlSWE2D} and the linear  RSWE \eqref{eqn:LinSWE2D}, using the non-conservative transformation \eqref{eq:version1} and the conservative transformation \eqref{eq:version2}, is called structure preserving if the evolution equations \eqref{eq:energy_conservation} and \eqref{eq:energy_conservation_linear} of the total energy/entropy can be derived using only integration by parts, without the chain rule or product rule.
\end{definition}
To ensure structure preservation we will transform the gradient operator $\grad$ using the non-conservative transformation \eqref{eq:version1}, and the divergence $\div{}$ and curl $\curl{}$  operators using the conservative transformation \eqref{eq:version2}. That is 
\begin{equation}\label{eq:curvilinear-transformation}
\begin{split}
    \grad{G} &= 
\begin{bmatrix}
    q_x G_q + r_xG_r\\
    q_y G_q + r_yG_r
\end{bmatrix},
\quad 
\div{\mathbf{F}}  = \frac{1}{J} \left((Jq_x F_u)_q + (Jr_xF_u)_r\right) +  \frac{1}{J} \left((Jq_y F_v)_q + (Jr_yF_v)_r\right),\\
\curl{\mathbf{F}}  &= \frac{1}{J} \left((Jq_x F_v)_q + (Jr_xF_v)_r\right) - \frac{1}{J} \left((Jq_y F_u)_q + (Jr_yF_u)_r\right).
\end{split}
\end{equation}
Here $G: \Omega \to \mathbb{R}$ is a scalar field and $\mathbf{F}: \Omega \to \mathbb{R}^2$ is a vector field.
For the nonlinear  RSWE \eqref{eqn:nlSWE2D} and the linear  RSWE \eqref{eqn:LinSWE2D} with the curvilinear transformations \eqref{eq:curvilinear-transformation}, we can show that the transformation is structure preserving. In particular, using only integration by parts without the chain/product rule, we can  show the  conservation properties of   total energy and total vorticity,  and steady linear geostrophic balance for the transformed continuous operator \cite{ricardo2023conservation}.

Our main focus here is to design energy/entropy stable numerical methods for the nonlinear and linear IBVPs for the RSWE.
As above,  the total energy/entropy $E(t)$ is the integral of the elemental energy/entropy over the spatial domain
\begin{equation}\label{eq:elemental_curv}
    E(t) = \int_{\Omega} e d\Omega = \int_{0}^1\int_{0}^1e Jdqdr,
\end{equation}
where the elemental energy $e$ is defined by \eqref{eq:elemental} for nonlinear the RSWE and by \eqref{eq:energy_linear} for the linear RSWE.
Define the boundary terms 
{\footnotesize
\begin{equation}\label{eq:BT-nonlinear}
\mathrm{BT}:= -\sum_{\xi = q, r}\int_{0}^1 \left.\left(\left(\sqrt{\xi_x^2 + \xi_y^2}J\left(\frac{1}{2}gh^2u_n + eu_n\right)\right)\right|_{\xi = 0} + \left.\left(J\sqrt{\xi_x^2 + \xi_y^2}\left(\frac{1}{2}gh^2u_n + eu_n\right)\right)\right|_{\xi = 1}\right) \frac{dqdr}{d\xi},
\end{equation}
}
for the transformed nonlinear RSWE \eqref{eqn:nlSWE2D} with the curvilinear transformations \eqref{eq:curvilinear-transformation}, and the boundary terms 
{\scriptsize
\begin{equation}\label{eq:BT-linear}
\begin{split}
&\mathrm{BT}:= -\sum_{\xi = q, r}\int_{0}^1 \left.\left(\left(\sqrt{\xi_x^2 + \xi_y^2}J\left(gHh{u}_n + e{U}_n\right)\right)\right|_{\xi = 0} + \left.\left(J\sqrt{\xi_x^2 + \xi_y^2}\left(gHh{u}_n + e{U}_n\right)\right)\right|_{\xi = 1}\right) \frac{dqdr}{d\xi}\\
&= -\frac{c^2 H}{2}\sum_{\xi = q, r}\int_{0}^1 \left.\left(\left(\sqrt{\xi_x^2 + \xi_y^2}J\left(\lambda_1 w_1^2 + \lambda_2 w_2^2 + \lambda_3 w_3^2\right)\right)\right|_{\xi = 0} + \left.\left(J\sqrt{\xi_x^2 + \xi_y^2}\left(\lambda_1 w_1^2 + \lambda_2 w_2^2 + \lambda_3 w_3^2\right)\right)\right|_{\xi = 1}\right) \frac{dqdr}{d\xi},   
\end{split}
\end{equation}
}
for the transformed linear the RSWE \eqref{eqn:LinSWE2D} with the curvilinear transformations \eqref{eq:curvilinear-transformation}.
    Using only integration by parts without the chain/product rule, we can so show that the evolution equation for the total  energy $E(t)$ is given by 
    \begin{equation}\label{eq:energy-rate}
      \frac{dE}{dt} = \mathrm{BT},
    \end{equation}
    where the boundary terms $\mathrm{BT}$ are defined by  \eqref{eq:BT-nonlinear} and  \eqref{eq:BT-linear}.  Next we will introduce discrete approximations of the transformed operators \eqref{eq:curvilinear-transformation} and try as much as possible to replicate the evolution equation \eqref{eq:energy-rate} of the total energy at the discrete level.
\subsection{Semi-discrete approximation}
We discretise the reference computational square $(q,r)\in   [0,1]^2$ with an evenly spaced mesh across each axis, $\xi \in \{ q, r \}$. 
    For each $\xi \in \{q,r\}$, consider the uniform discretisation of the unit interval $\xi \in [0, 1]$ 
    \begin{align}
        \xi_{i} = \frac{i-1}{n_{\xi}-1}, \quad i \in \{1, 2, \dots, n_{\xi} \}, 
    \end{align}
    where $n_{\xi}$ is the number of grid-points used on the $\xi$-axis. 
    
    We will use the traditional SBP operators \cite{BStrand1994,kreiss1974finite} to approximate the spatial derivatives, $\partial/{\partial\xi}$. 
	For each $\xi \in \{q,r\}$ define $H_{\xi} =\text{diag}\left(h_1^{(\xi)} , \dots, h_{n_{\xi}}^{(\xi)}\right)$, with $h_j^{(\xi)} >0$ for all $j \in \{1, \dots, n_\xi \}$.
	We consider the SBP derivative operators $D_{\xi} : \R^{n_{\xi}} \mapsto \R^{n_{\xi}}$ so that the SBP property holds
	\begin{align}\label{eq:upw_SBP}
        (D_{\xi} \bm{f} )^\top H_{\xi} \bm{g} +  \bm{f}^\top H_{\xi} (D_{\xi }\bm{g}) = f(\xi_{n_{\xi}})g(\xi_{n_{\xi}}) - f(\xi_1)g(\xi_1),
    \end{align}
    where  $\bm{f} = (f(\xi_1), \dots , f(\xi_{n_{\xi}}) )^\top$, $\bm{g} = (g(\xi_1), \dots , g(\xi_{n_{\xi}}) )^\top$ are vectors sampled from weakly differentiable functions of the $\xi$ variable. 
 %
%

	The 1D SBP operators can be extended to higher space dimensions using tensor products $\otimes$. Let $f:(q,r)\to \mathbb{R}$ denote a 2D scalar funtion, and $f_{ij} := {f}(q_i,r_j)$ denote the corresponding 2D grid function.
	The  2D scalar grid function $f_{ij}$ is rearranged row-wise as a vector $\bm{f}$ of length $n_qn_r$. For $\xi \in \{q,r\}$  define the 2D spatial discrete operators
     {
    \begin{align*}
\centering
&\bm{D}_{q} = \left( {D}_{ q} \otimes I_{n_r}\right), \quad \bm{D}_{r} = \left(I_{n_q} \otimes  {D}_{ r}\right), \quad
\bm{H} = \left( H_{q} \otimes H_r\right), 
\end{align*}
    where $I_{n_{\xi}}$ are the identity matrices of size $n_{\xi} \times n_{\xi}$. 
    }
    The matrix operator $\bm{D}_{\xi}$  will approximate the partial derivative operator in the $\xi$-direction. A discrete inner product on $\R^{n_{q} \times n_{r}}$ is induced by $\bm{H}$ through 
     \begin{align}
        \l \bm{g} , \bm{f} \r_{\bm{H}} := \bm{g}^\top \bm{H} \bm{f} = \sum_{i=1}^{n_q} \sum_{j=1}^{n_r} f_{ij}g_{ij} h_i^{(q)} h_j^{(r)}, 
    \end{align}
        and  the  discrete total energy   is given by
     \begin{equation}\label{eq:elemental_curv-discrete}
     E_d(t) = \sum_{i=1}^{n_q}\sum_{j=1}^{n_r} e_{ij}J_{ij} h_i^{(q)}h_j^{(r)}, 
\end{equation}
where $e_{ij}$ are the elemental energies sampled on the grid.

Next, we use the SBP operators to approximate the the transformed differential operators \eqref{eq:curvilinear-transformation}, and we have  
{\footnotesize
\begin{equation}\label{eq:curvilinear-transformation-discrete}
\begin{split} 
&\grad_d\cdot{\mathbf{F}}  = {J}^{-1} \left(\left(\mathbf{D}_q(Jq_x F_u) + \mathbf{D}_r(Jr_x F_u)\right) +   \left(\mathbf{D}_q(Jq_y F_v) + \mathbf{D}_r(Jr_y F_v)\right)\right),\\
&\grad_d{G} = 
\begin{bmatrix}
    q_x \mathbf{D}_qG + r_x\mathbf{D}_rG\\
    q_y \mathbf{D}_qG + r_y\mathbf{D}_rG
\end{bmatrix},
\quad
\grad_d\times{\mathbf{F}}  = {J}^{-1}\left( \left(\mathbf{D}_q(Jq_x F_v) + \mathbf{D}_r(Jr_xF_v)\right) -  \left(\mathbf{D}_q(Jq_y F_u) + \mathbf{D}_r(Jr_yF_u)\right)\right).
\end{split}
\end{equation}
}
The semi-discrete approximations of the nonlinear  RSWE \eqref{eqn:nlSWE2D} on the curvilinear mesh is derived by replacing the continuous operators \eqref{eq:curvilinear-transformation} with the discrete operators \eqref{eq:curvilinear-transformation-discrete}. We have
\begin{equation}\label{eqn:nlSWE2D-semi-discrete}
    \begin{cases}
    \frac{d h}{d t } + \grad_d\cdot{\mathbf{F}} = 0, \\
    \frac{d \mathbf{u}}{d t} + \omega \mathbf{u}^\perp +  \grad_d{G} = \mathbf{0},\\
    \mathbf{F}=h\mathbf{u}, \quad G = \frac{{1 }}{{2}}|\mathbf{u}|^2 +gh, \quad \omega = \grad_d\times{\mathbf{u}} + f_c.\end{cases}
\end{equation}
Similarly, we also approximate the linear  RSWE \eqref{eqn:LinSWE2D} with the discrete differential operators   \eqref{eq:curvilinear-transformation-discrete} on the grid, yielding 
\begin{equation}\label{eqn:LinSWE2D-semi-discrete}
    \begin{cases}
    \frac{d h}{d t } + \grad_d\cdot{\mathbf{F}} = 0, \\
    \frac{d \mathbf{u}}{d t} + \omega \mathbf{U}^\perp + f_c \mathbf{u}^\perp + \grad_d{G} = \mathbf{0},\\
     \mathbf{F}= H\mathbf{u} + \mathbf{U}h, \quad G = \mathbf{U}\cdot\mathbf{u} +gh, \quad\omega =  \grad_d\times{\mathbf{u}}.
    \end{cases}
\end{equation}
We approximate the boundary integrals \eqref{eq:BT-nonlinear}--\eqref{eq:BT-linear} by the numerical quadrature rules induced by $\bm{H}_{\xi}$, which gives the discrete the boundary terms
{\small
\footnotesize
\scriptsize
\begin{equation}\label{eq:BT-nonlinear-discrete}
\begin{split}
\mathrm{BT}_d:= &-\sum_{\xi = q, r}\sum_{j=1}^{\frac{n_qn_r}{n_\xi}} \left(\left.\left(J_j\sqrt{\xi_{xj}^2 + \xi_{yj}^2}\left(\frac{1}{2}gh_j^2u_{nj} + eu_{nj}\right)\right)\right|_{\xi_{1} = 0} 
+ \left.\left(J_j\sqrt{\xi_{xj}^2 + \xi_{yj}^2}\left(\frac{1}{2}gh_j^2u_{nj} + eu_{nj}\right)\right)\right|_{\xi_{n_\xi} = 1}\right) \frac{h_j^{(q)}h_j^{(r)}}{h_j^{(\xi)}},
\end{split}
\end{equation}
}
for the semi-discrete nonlinear RSWE \eqref{eqn:nlSWE2D-semi-discrete}. The semi-discrete linear RSWE \eqref{eqn:LinSWE2D-semi-discrete}  gives the discrete boundary terms  
{
\scriptsize
\begin{equation}\label{eq:BT-linear-discrete}
\begin{split}
&\mathrm{BT}_d:= -\sum_{\xi = q, r}\sum_{j=1}^{\frac{n_qn_r}{n_\xi}} \left.\left(\left(\sqrt{\xi_{xj}^2 + \xi_{yj}^2}J_j\left(gHh_j{u}_{nj} + e_j{U}_n\right)\right)\right|_{\xi_{1} = 0} + \left.\left(J_j\sqrt{\xi_{xj}^2 + \xi_{yj}^2}\left(gHh_j{u}_{nj} + e_j{U}_n\right)\right)\right|_{\xi_{n_\xi} = 1}\right) \frac{h_j^{(q)}h_j^{(r)}}{h_j^{(\xi)}},
\end{split}
\end{equation}
}
By using the eigen decomposition \eqref{eq:linear_transform} the boundary term defined in \eqref{eq:BT-linear-discrete}, for the semi-discrete linear RSWE, can be rewritten as
{
\begin{equation}\label{eq:BT-linear-discrete-eigen-decompose}
\begin{split}
    %
\mathrm{BT}_d= &-\frac{c^2H}{2}\sum_{\xi = q, r}\sum_{j=1}^{\frac{n_qn_r}{n_\xi}} \left(\left.\left(\sqrt{\xi_{xj}^2 + \xi_{yj}^2}J_j\left(\lambda_1w_{1j}^2 + \lambda_2w_{2j}^2 + \lambda_3w_{3j}^2\right)\right)\right|_{\xi_{1} = 0} \right)
\frac{h_j^{(q)}h_j^{(r)}}{h_j^{(\xi)}}
\\
&
-
\frac{c^2H}{2}\sum_{\xi = q, r}\sum_{j=1}^{\frac{n_qn_r}{n_\xi}} \left( \left.\left(J_j\sqrt{\xi_{xj}^2 + \xi_{yj}^2}\left(\lambda_1w_{1j}^2 + \lambda_2w_{2j}^2 + \lambda_3w_{3j}^2\right)\right)\right|_{\xi_{n_\xi} = 1}\right) \frac{h_j^{(q)}h_j^{(r)}}{h_j^{(\xi)}}.
\end{split}
\end{equation}
}
\begin{theorem}
    Consider the semi-discrete nonlinear  RSWE \eqref{eqn:nlSWE2D-semi-discrete} and the semi-discrete linear  RSWE \eqref{eqn:LinSWE2D-semi-discrete}, where the total semi-discrete energy $E_d(t)$ is given by \eqref{eq:elemental_curv-discrete}. If the discrete derivative operators satisfy the SBP property \eqref{eq:upw_SBP}, then   the evolution equation for the total semi-discrete  energy $E_d(t)$ is given by 
    $
    {dE_d}/{dt} = \mathrm{BT}_d,
    $
    where the boundary terms $\mathrm{BT}_d$ are given by  \eqref{eq:BT-nonlinear-discrete} and  \eqref{eq:BT-linear-discrete}.
\end{theorem}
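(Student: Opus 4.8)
The plan is to transcribe, line by line, the structure‑preserving continuous derivation of the energy rate \eqref{eq:energy-rate}: replace the spatial integral $\int_\Omega(\cdot)\,d\Omega=\int_0^1\!\int_0^1(\cdot)\,J\,dq\,dr$ by the discrete inner product $\langle\cdot,\cdot\rangle_{\bm H}$, and replace every continuous integration by parts $\int f\,\partial_\xi g=[fg]_\partial-\int(\partial_\xi f)\,g$ by the tensor‑product version of the SBP rule \eqref{eq:upw_SBP}. First I would differentiate \eqref{eq:elemental_curv-discrete} in time; since the Jacobian weights $J_{ij}$ and the quadrature weights $h_i^{(q)}h_j^{(r)}$ are time independent, $\tfrac{d}{dt}E_d=\sum_{i,j}J_{ij}\,\tfrac{d e_{ij}}{dt}\,h_i^{(q)}h_j^{(r)}$. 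At each node the ordinary chain rule, which acts pointwise in space and is therefore legitimate, gives $\tfrac{d e}{dt}=\mathbf F\cdot\tfrac{d\mathbf u}{dt}+G\,\tfrac{dh}{dt}$ for \eqref{eqn:nlSWE2D-semi-discrete} and $\tfrac{d e}{dt}=H\mathbf u\cdot\tfrac{d\mathbf u}{dt}+gh\,\tfrac{dh}{dt}$ for \eqref{eqn:LinSWE2D-semi-discrete}. Substituting the semi‑discrete momentum and mass equations and using the purely algebraic pointwise identity $\mathbf u\cdot\mathbf u^\perp=0$ removes the rotation term $f_c\mathbf u^\perp$, and in the nonlinear case removes the entire vortical term since $\mathbf F\cdot(\omega\mathbf u^\perp)=\omega h(\mathbf u\cdot\mathbf u^\perp)=0$; in particular the discrete curl never enters the nonlinear energy balance.

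For \eqref{eqn:nlSWE2D-semi-discrete} this leaves $\tfrac{d}{dt}E_d=-\sum_{i,j}J_{ij}\big(\mathbf F\cdot\nabla_d G+G\,\nabla_d\!\cdot\mathbf F\big)_{ij}h_i^{(q)}h_j^{(r)}$. Expanding $\nabla_d G$ and $\nabla_d\!\cdot\mathbf F$ with the discrete operators \eqref{eq:curvilinear-transformation-discrete}, the factor $J^{-1}$ inside $\nabla_d\!\cdot\mathbf F$ cancels the Jacobian weight, and the factor $J$ multiplying $\nabla_d G$ is absorbed into the metric‑weighted contravariant fluxes, i.e. the grid vectors $\bm A$ of $Jq_xF_u+Jq_yF_v$ and $\bm B$ of $Jr_xF_u+Jr_yF_v$; the very same combinations reappear, with $\bm D_q$ and $\bm D_r$ acting on them, in the divergence term. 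Hence the right‑hand side collapses to $-\big(\langle\bm A,\bm D_q\bm G\rangle_{\bm H}+\langle\bm G,\bm D_q\bm A\rangle_{\bm H}\big)-\big(\langle\bm B,\bm D_r\bm G\rangle_{\bm H}+\langle\bm G,\bm D_r\bm B\rangle_{\bm H}\big)$, two SBP pairs. Applying \eqref{eq:upw_SBP} in tensor‑product form, $\bm a^\top\bm H\bm D_q\bm b+\bm b^\top\bm H\bm D_q\bm a=\bm a^\top(E_q\otimes H_r)\bm b$ with $E_q=\operatorname{diag}(-1,0,\dots,0,1)$, and the mirror statement with $H_q\otimes E_r$ for the $r$‑direction, annihilates every volume contribution and leaves only the grid values on the faces $\xi=0,1$, weighted by the transverse quadrature $H_{\xi'}$. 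On the $\xi=1$ face the outward normal is $\nabla\xi/\sqrt{\xi_x^2+\xi_y^2}$ and on $\xi=0$ it is $-\nabla\xi/\sqrt{\xi_x^2+\xi_y^2}$, so $\bm A=\pm J\sqrt{q_x^2+q_y^2}\,F_n$ there; inserting $G F_n=\tfrac{1}{2}gh^2u_n+eu_n$ from \eqref{eq:energy_conservation} makes both faces contribute with the same sign, and the result is exactly $\mathrm{BT}_d$ as written in \eqref{eq:BT-nonlinear-discrete} (using $h_j^{(q)}h_j^{(r)}/h_j^{(\xi)}=h_j^{(\xi')}$).

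For \eqref{eqn:LinSWE2D-semi-discrete} the same template applies, the only genuine extra work being the vortical term $H\mathbf u\cdot(\omega\mathbf U^\perp)$, which does not vanish pointwise, together with the background‑flow pieces $\mathbf U h$ of $\mathbf F$ and $\mathbf U\cdot\mathbf u$ of $G$. Writing $\mathbf F=H\mathbf u+\mathbf U h$ and $gh=G-\mathbf U\cdot\mathbf u$, I would peel off the part that reproduces, by two SBP pairs exactly as above, the pressure‑flux boundary contribution $gHh u_n$, and recombine the remaining pieces into the transport‑flux contribution $e\,U_n$, following step by step the continuous manipulation behind the energy continuity equation \eqref{eq:energy_continuity_linear}. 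The crucial observation is that each step there is either pointwise algebra, an integration by parts, or an appeal to the metric identities $\partial_q(Jq_x)+\partial_r(Jr_x)=0$ and $\partial_q(Jq_y)+\partial_r(Jr_y)=0$, and all three have exact discrete analogues — the last because the tensor‑product operators $\bm D_q=D_q\otimes I_{n_r}$ and $\bm D_r=I_{n_q}\otimes D_r$ commute, so the natural discrete metrics built from $\bm D_r\bm y$, $-\bm D_q\bm y$, $-\bm D_r\bm x$, $\bm D_q\bm x$ satisfy $\bm D_q(\bm J\bm q_x)+\bm D_r(\bm J\bm r_x)=\bm D_q\bm D_r\bm y-\bm D_r\bm D_q\bm y=0$ and likewise for the $y$‑metrics. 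Assembling the face sums then gives $\tfrac{d}{dt}E_d=\mathrm{BT}_d$ with $\mathrm{BT}_d$ the quadrature‑discretised boundary integral \eqref{eq:BT-linear-discrete}, equivalently its eigen‑decomposed form \eqref{eq:BT-linear-discrete-eigen-decompose}.

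The \textbf{main obstacle} is not conceptual but lies in keeping every manipulation inside the admissible toolkit — pointwise (in space) algebra and the ordinary time chain rule, SBP/integration by parts, and the discrete metric identities — and in particular never invoking a discrete product rule, which fails. Concretely, the mass flux $\mathbf F$ and the vorticity must be kept in the conservative (metrics‑inside‑the‑operator) form wherever a discrete divergence or curl acts on them, while $\nabla_d G$ stays non‑conservative, because only this pairing makes the metric‑weighted fluxes $\bm A,\bm B$ appearing on the two sides of each would‑be SBP pair coincide, so that the volume terms cancel with no product rule; this is the discrete face of the structure‑preserving property of Definition \ref{def:structure-preserving}. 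Once that pairing is respected, the remaining work is routine verification that the resulting face sums, with their Jacobian and $\sqrt{\xi_x^2+\xi_y^2}$ metric factors and transverse quadrature weights, match term by term the quadrature discretisations \eqref{eq:BT-nonlinear-discrete} and \eqref{eq:BT-linear-discrete} of the continuous boundary term.
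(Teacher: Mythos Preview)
Your proposal is correct and follows the paper's route: the paper's own proof is the single sentence that since the continuous energy rate \eqref{eq:energy-rate} is derived using only integration by parts, the SBP property \eqref{eq:upw_SBP} yields the analogous discrete identity. You have supplied the mechanics the paper leaves entirely implicit --- the $(\bm A,\bm G)$ and $(\bm B,\bm G)$ SBP pairings in the nonlinear case, and the extra vortical-term and discrete-metric-identity bookkeeping in the linear case --- so your argument is in fact considerably more detailed than the published one.
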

\begin{proof}
    Since the continuous energy rate \eqref{eq:energy-rate} is derived by using integration by parts only it follows that SBP will yield similar evolution equation for the total discrete energy given by \eqref{eq:elemental_curv-discrete}.
\end{proof}
Note that we are yet to enforce the BCs \eqref{eq:BC-sub-critical_inflow}--\eqref{eq:BC-sub-critical_outflow} or \eqref{eq:BC-sub-critical_nonlinear_inflow}--\eqref{eq:BC-sub-critical_nonlinear_outflow} at the boundaries.

\subsection{Stable numerical boundary procedures}
We will now implement the BCs and prove numerical stability. The BCs will be imposed by adding SATs, source terms, which encode the boundary operators, to the semi-discrete approximations \eqref{eqn:nlSWE2D-semi-discrete} and  \eqref{eqn:LinSWE2D-semi-discrete} with penalty weights chosen to ensure numerical stability.

For boundaries in the $\xi$-axis, we make the ansatz 
$$
\operatorname{SAT}_\xi = 
\begin{bmatrix}
    \operatorname{SAT}^{h}_\xi\\
    \operatorname{SAT}^u_\xi
\end{bmatrix},
$$
where $ \left.\operatorname{SAT}^{h}_\xi\right|_{ij} \in \mathbb{R}$ are the $\operatorname{SAT}$s for the mass equation and  $ \left.\operatorname{SAT}^{u}_\xi\right|_{ij} \in \mathbb{R}^2$ are the $\operatorname{SAT}$s for the velocity equations. A semi-discrete approximation of the IBVP for the nonlinear RSWE is obtained by appending the $\operatorname{SAT}$s to the right hand sides of \eqref{eqn:nlSWE2D-semi-discrete}, we have 
\begin{equation}\label{eqn:nlSWE2D-semi-discrete-SAT}
    \begin{cases}
    \frac{d h}{d t } + \grad_d\cdot{\mathbf{F}} = \sum_{\xi = q, r}\operatorname{SAT}^{h}_\xi, \\
    \frac{d \mathbf{u}}{d t} + \omega \mathbf{u}^\perp +  \grad_d{G} = \sum_{\xi = q, r}\operatorname{SAT}^u_\xi,\\
    \mathbf{F}=h\mathbf{u}, \quad G = \frac{{1 }}{{2}}|\mathbf{u}|^2 +gh, \quad \omega = \grad_d\times{\mathbf{u}} + f_c.\end{cases}
\end{equation}
Similarly, for the linear RSWE we append the $\operatorname{SAT}$s to the right hand sides of \eqref{eqn:LinSWE2D-semi-discrete}, giving
\begin{equation}\label{eqn:LinSWE2D-semi-discrete-SAT}
    \begin{cases}
    \frac{d h}{d t } + \grad_d\cdot{\mathbf{F}} = \sum_{\xi = q, r}\operatorname{SAT}^{h}_\xi, \\
    \frac{d \mathbf{u}}{d t} + \omega \mathbf{U}^\perp + f_c \mathbf{u}^\perp + \grad_d{G} = \sum_{\xi = q, r}\operatorname{SAT}^{u}_\xi,\\
     \mathbf{F}= H\mathbf{u} + \mathbf{U}h, \quad G = \mathbf{U}\cdot\mathbf{u} +gh, \quad\omega =  \grad_d\times{\mathbf{u}}.
    \end{cases}
\end{equation}
The exact forms of the $\operatorname{SAT}$s and the penalties will be derived and analyzed below. To keep in mind, it is significantly important that the $\operatorname{SAT}$s ensure both consistency and numerical stability.
The definition of numerical stability will be important for the following analysis.
\begin{definition}
Consider the semi-discrete approximations \eqref{eqn:nlSWE2D-semi-discrete-SAT} and \eqref{eqn:LinSWE2D-semi-discrete-SAT}
    for subcritical flows, where the BCs are enforced weakly using $\operatorname{SAT}$.  The semi-discrete approximation \eqref{eqn:nlSWE2D-semi-discrete-SAT} or \eqref{eqn:LinSWE2D-semi-discrete-SAT} is called energy/entropy stable if for homogeneous boundary data $\mathbf{d} =0$ we have $dE_d(t)/dt = \mathbb{BT}_n \le 0$, where $E_d(t)$ is the total semi-discrete energy/entropy defined by \eqref{eq:elemental_curv-discrete}.
\end{definition}
We will begin with the $\operatorname{SAT}$s for the linear RSWE and proceed later to  the nonlinear RSWE.
\subsubsection{SATs for the linear RSWE}
Here, we will derive the $\operatorname{SATs}$ for the linear RSWE. At each point on the boundary we set the $\operatorname{SAT}$
\begin{equation}\label{eq:sat-linear}
\mathrm{SAT}^{n}_{j} = 
\left({W}^{-1}R{P}{S}\right)
\begin{bmatrix}
    \tau_h\left(w_2 - \gamma w_1- d_1\right)\\
     \tau_n\left(w_2 - \gamma w_1- d_1\right)\\
     0
\end{bmatrix},
\quad
\mathrm{SAT}^{s}_{j} = 
\left({W}^{-1}R{P}{S}\right)
\begin{bmatrix}
    0\\
    0\\
     \tau_s U_n\left(w_3- d_2\right)
\end{bmatrix}
\end{equation}
with the $3$-by-$3$ matrices given by
$$
 W =  \frac{1}{2}\begin{bmatrix}
g & 0 & 0\\
0 & H & 0\\
0 & 0 & H
\end{bmatrix}
,\quad 
S = \frac{1}{\sqrt{2}}\begin{bmatrix}
        1&1 & 0\\
        1&-1&0\\
        0 & 0 & \sqrt{2}
    \end{bmatrix}, \quad 
    {P}
     = \begin{bmatrix}
        \frac{1}{H}&0 & 0\\
        0&\frac{1}{c}&0\\
        0 & 0 & \frac{1}{c}
    \end{bmatrix},
    \quad
{R}
     = \begin{bmatrix}
        1 &0 & 0\\
        0&n_x&n_y\\
        0 & n_y & -n_x
    \end{bmatrix},
$$
where $\tau_h$, $\tau_n$ and $\tau_s$ are penalty parameters that will be determined by requiring numerical stability and $\bm{m} = [m_x, m_y]^\top = [n_y, -n_x]^\top$ is the tangential unit vector on the boundary. Note that $u_s = \bm{m}\cdot \bm{u}$ is the tangential component of the velocity field on the boundary. It is noteworthy that the $\mathrm{SAT}^{n}$ and $\mathrm{SAT}^{s}$ encode the boundary operators defined by \eqref{eq:BC-sub-critical_inflow}--\eqref{eq:BC-sub-critical_outflow} and would vanish if the BCs are satisfied exactly by $w_1, w_2, w_3$.

The following identities 
$$
\begin{bmatrix}
    h\\
    u\\
    v
\end{bmatrix}^\top W\mathrm{SAT}^{n}_{j} 
= 
\tau_h w_1 \left(w_2 - \gamma w_1- d_1\right) +
     \tau_n w_2\left(w_2 - \gamma w_1- d_1\right),
     \quad
\begin{bmatrix}
    h\\
    u\\
    v
\end{bmatrix}^T W\mathrm{SAT}^{s}_{j} 
= 
\tau_s w_3 U_n \left(w_3- d_1\right) ,
$$
will be useful for the stability analysis below. 

We introduce the unit vectors
$e_1 = [1, 0, \cdots, 0]^T\in \mathbb{R}^{n_\xi}$, $e_{n} = [0,  \cdots, 0, 1]^T\in \mathbb{R}^{n_\xi}$
and the boundary projection matrices
\begin{align}\label{eq:boundary-projection}
\centering
&\mathbf{e}_{q, 1} = \left( e_1 e_1^T \otimes I_{n_r}\right), \quad \mathbf{e}_{q, n_q} = \left( e_n e_n^T \otimes I_{n_r}\right), \quad \mathbf{e}_{r, 1} = \left(I_{n_q} \otimes  e_1 e_1^T\right), \quad
\mathbf{e}_{r, n_r} = \left(I_{n_q} \otimes  e_n e_n^T\right).
\end{align}
The $\xi$-axis SAT for the linear RSWE is given by
{\footnotesize
\begin{eqnarray}\label{eq:sat-axis-linear}
\operatorname{SAT}_\xi=
 \begin{cases}
    \frac{c^2H}{2}\bm{H}_{\xi}^{-1}\left(\mathbf{e}_{\xi, 1}\sqrt{\bm{\xi}_x^2 + \bm{\xi}_y^2}\left(\mathrm{SAT}^{n} + \mathrm{SAT}^{s}\right) + \mathbf{e}_{\xi, n_\xi}\sqrt{\bm{\xi}_x^2 + \bm{\xi}_y^2}\left(\mathrm{SAT}^{n} + \mathrm{SAT}^{s}\right)\right), \quad \ \text{if} \ U_n < 0, \\
   \frac{c^2H}{2}\bm{H}_{\xi}^{-1}\left(\mathbf{e}_{\xi,1}\sqrt{\bm{\xi}_x^2 + \bm{\xi}_y^2}\mathrm{SAT}^{n} + \mathbf{e}_{\xi,n_\xi}\sqrt{\bm{\xi}_x^2 + \bm{\xi}_y^2}\mathrm{SAT}^{n}\right), \quad \ \text{if} \ U_n > 0,
    \end{cases}
\end{eqnarray}
}
where $\mathrm{SAT}^{n}$ and $\mathrm{SAT}^{s}$ are given by \eqref{eq:sat-linear}.
Here $\mathbf{e}_{\xi,1}$ and  $\mathbf{e}_{\xi,n_\xi}$ defined in \eqref{eq:boundary-projection} are projection matrices that project the SATs to the respective boundaries $\xi_1 =0$ and $\xi_{n_\xi} = 1$. We assume homogeneous boundary data $\bm{d}=0$ and introduce the numerical boundary term at every point on the boundary
{\small
\begin{equation}\label{eq:boundary-term-pointwise-linear}
    \operatorname{\mathbb{BT}}_j =
\begin{cases}
-\left(\lambda_1w_{1j}^2 + \lambda_2w_{2j}^2 + \lambda_3w_{3j}^2\right)+ \tau_h w_{1j} \left(w_{2j} - \gamma w_{1j}\right) +
     \tau_n w_{2j}\left(w_{2j} - \gamma w_{1j}\right) + \tau_s U_n w_{3j}^2, \quad \ \text{if} \ U_n < 0,\\
     
-\left(\lambda_1w_{1j}^2 + \lambda_2w_{2j}^2 + \lambda_3w_{3j}^2\right)+ \tau_h w_{1j} \left(w_{2j} - \gamma w_{1j}\right) +
     \tau_n w_{2j}\left(w_{2j} - \gamma w_{1j}\right) , \quad \ \text{if} \ U_n \ge  0.
     \end{cases}
\end{equation}
}
The following Lemma constrains the penalty parameters and ensures that the boundary term $\operatorname{\mathbb{BT}}_j$  is never positive.
\begin{lemma}\label{lem:boundary-term-pointwise-linear}
    Consider the boundary term  $\operatorname{\mathbb{BT}}_j$ defined by \eqref{eq:boundary-term-pointwise-linear}, where $\lambda_1 = U_n + c >0$, $\lambda_2 = U_n-c < 0$ and $\lambda_3 = U_n$. If $\gamma^2 \le -\lambda_1/\lambda_2$, and $\tau_n =\lambda_2$, $\tau_h = \gamma \lambda_2$ and $\tau_s \ge 1$,  then the boundary term is never positive, that is $\operatorname{\mathbb{BT}}_j \le 0$.
\end{lemma}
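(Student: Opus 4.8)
The plan is to prove Lemma \ref{lem:boundary-term-pointwise-linear} by direct substitution and algebraic simplification of the pointwise boundary term \eqref{eq:boundary-term-pointwise-linear}. First I would fix a boundary point, drop the subscript $j$, write $w_1,w_2,w_3$ for the characteristic variables, and insert the prescribed penalties $\tau_n=\lambda_2$, $\tau_h=\gamma\lambda_2$, $\tau_s\ge 1$. The SAT contribution then factors: $\tau_h w_1(w_2-\gamma w_1)+\tau_n w_2(w_2-\gamma w_1)=\lambda_2(\gamma w_1+w_2)(w_2-\gamma w_1)=\lambda_2(w_2^2-\gamma^2 w_1^2)$.

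Next I would combine this with the physical flux part $-(\lambda_1 w_1^2+\lambda_2 w_2^2+\lambda_3 w_3^2)$. The $w_2^2$ terms cancel exactly, and the cross terms $w_1w_2$ never appear, so the $(w_1,w_2)$-block collapses to the single term $-(\lambda_1+\gamma^2\lambda_2)w_1^2$. For the $w_3$ contribution I would split into cases: at an inflow point ($U_n<0$) the remaining terms are $-\lambda_3 w_3^2+\tau_s U_n w_3^2=U_n(\tau_s-1)w_3^2$, using $\lambda_3=U_n$; at an outflow point ($U_n\ge 0$) there is no $w_3$ SAT and the term is simply $-U_n w_3^2$.

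Then I would read off the signs from the hypotheses. Since $\lambda_2<0$, the bound $\gamma^2\le -\lambda_1/\lambda_2$ is equivalent, after multiplying by the negative number $\lambda_2$ and reversing the inequality, to $\gamma^2\lambda_2\ge -\lambda_1$, i.e.\ $\lambda_1+\gamma^2\lambda_2\ge 0$, so $-(\lambda_1+\gamma^2\lambda_2)w_1^2\le 0$. In the inflow case $U_n<0$ and $\tau_s\ge 1$ give $U_n(\tau_s-1)w_3^2\le 0$; in the outflow case $U_n\ge 0$ gives $-U_n w_3^2\le 0$. Summing the non-positive pieces yields $\operatorname{\mathbb{BT}}_j\le 0$ in both regimes, which is the claim.

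I do not anticipate a genuine obstacle: the argument is a short, self-contained computation, and the only place requiring care is the direction of the inequality when dividing by the negative eigenvalue $\lambda_2$, together with keeping the inflow/outflow bookkeeping of the $w_3$ terms straight. The actual content of the lemma is the choice $\tau_h=\gamma\lambda_2$, $\tau_n=\lambda_2$: these are engineered precisely so that the indefinite $w_1w_2$ cross terms vanish and the $w_2^2$ terms cancel, which is what makes the discrete boundary term mirror the continuous estimate of Lemma \ref{Lem:BC-Sub-critical flow}. I would also remark that this is why $\tau_s$ only needs the one-sided bound $\tau_s\ge 1$ rather than an equality.
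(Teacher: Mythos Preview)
Your proposal is correct and follows essentially the same approach as the paper: direct substitution of $\tau_h=\gamma\lambda_2$, $\tau_n=\lambda_2$ into \eqref{eq:boundary-term-pointwise-linear}, algebraic collapse of the $(w_1,w_2)$-block to $-(\lambda_1+\gamma^2\lambda_2)w_1^2$, and the inflow/outflow split for the $w_3$ term. Your write-up is in fact slightly more explicit than the paper's, spelling out the factorisation $\lambda_2(\gamma w_1+w_2)(w_2-\gamma w_1)=\lambda_2(w_2^2-\gamma^2 w_1^2)$ and the inequality reversal when multiplying by $\lambda_2<0$, but the logical content is identical.
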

\begin{proof}
    We consider the boundary term  $\operatorname{\mathbb{BT}}_j$ defined by \eqref{eq:boundary-term-pointwise-linear} and set $\tau_n =\lambda_2$, $\tau_h = \gamma \lambda_2$, we have
  \begin{equation*}
    \operatorname{\mathbb{BT}}_j =
\begin{cases}
-\left(\lambda_1 + \lambda_2 \gamma^2\right)w_{1j}^2 +  U_n (\tau_s-1) w_{3j}^2, \quad \ \text{if} \ U_n < 0\\
     
-\left(\lambda_1 + \lambda_2 \gamma^2\right)w_{1j}^2  - U_n w_{3j}^2
      , \quad \ \text{if} \ U_n \ge  0.
     \end{cases}
\end{equation*}  
Note that with $\lambda_1 > 0$ and $\lambda_2 < 0$ we have  $\gamma^2 \le -\lambda_1/\lambda_2 \implies \left(\lambda_1 + \lambda_2 \gamma^2\right) \ge 0$. If $\tau_s \ge 1$  then the boundary is never positive, that is $\operatorname{\mathbb{BT}}_j \le 0$.
\end{proof}
We will now state the theorem which ensures the stability of the linear semi-discrete approximation \eqref{eqn:LinSWE2D-semi-discrete-SAT}.
\begin{theorem}
    Consider the semi-discrete approximation  \eqref{eqn:LinSWE2D-semi-discrete-SAT} with the SAT \eqref{eq:sat-axis-linear}, for the vector invariant  linear RSWE \eqref{eqn:LinSWE2D} 
    at sub-critical flows, with $\lambda_1 = U_n + c >0$, $\lambda_2 = U_n-c < 0$ and $\lambda_3 = U_n$. If $\gamma^2 \le -\lambda_1/\lambda_2$, and $\tau_n =\lambda_2 < 0$, $\tau_h = \gamma \lambda_2$ and $\tau_s \ge 1$,  then the semi-discrete approximation  \eqref{eqn:LinSWE2D-semi-discrete-SAT} is energy stable. That is, with homogeneous boundary data $\bm{d}=0$, we have
    $$
    \frac{dE_d}{dt} = \mathbb{BT}_n:= \sum_{\xi = q, r}\sum_{j=1}^{\frac{n_qn_r}{n_\xi}} \left.\left(J_j\left(\sqrt{\xi_{xj}^2 + \xi_{yj}^2}\operatorname{\mathbb{BT}}_j\right)\right|_{\xi_{1} = 0} + \left.\left(J_j\sqrt{\xi_{xj}^2 + \xi_{yj}^2}\operatorname{\mathbb{BT}}_j\right)\right|_{\xi_{n_\xi} = 1}\right) \frac{h_j^{(q)}h_j^{(r)}}{h_j^{(\xi)}}\le 0,
    $$
    where the boundary term $\operatorname{\mathbb{BT}}_j$ is given by \eqref{eq:boundary-term-pointwise-linear}.
\end{theorem}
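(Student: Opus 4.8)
The plan is to differentiate the discrete energy \eqref{eq:elemental_curv-discrete} and split the rate into a contribution from the discrete differential operators \eqref{eq:curvilinear-transformation-discrete} and a contribution from the penalty terms \eqref{eq:sat-axis-linear}. Writing $\frac{dE_d}{dt}=2\sum_{i,j}\mathbf q_{ij}^\top W\,\dot{\mathbf q}_{ij}\,J_{ij}h_i^{(q)}h_j^{(r)}$ and inserting \eqref{eqn:LinSWE2D-semi-discrete-SAT}, the part coming from the transformed gradient/divergence/curl operators is handled by the preceding theorem: because those identities were derived using only the SBP rule \eqref{eq:upw_SBP} (no product or chain rule, and the weighted Coriolis block $WC$ is anti-symmetric and drops out), that part equals exactly the discrete boundary term $\mathrm{BT}_d$ in its eigen-decomposed form \eqref{eq:BT-linear-discrete-eigen-decompose}, i.e.\ a sum over the four boundary faces of the pointwise quantity $-\tfrac{c^2H}{2}J_j\sqrt{\xi_{xj}^2+\xi_{yj}^2}\bigl(\lambda_1w_{1j}^2+\lambda_2w_{2j}^2+\lambda_3w_{3j}^2\bigr)h_j^{(q)}h_j^{(r)}/h_j^{(\xi)}$.

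Next I would evaluate the penalty contribution $2\sum_{i,j}\mathbf q_{ij}^\top W\bigl(\sum_\xi\mathrm{SAT}_\xi\bigr)_{ij}J_{ij}h_i^{(q)}h_j^{(r)}$. The boundary projectors \eqref{eq:boundary-projection} restrict the sum to the faces $\xi=0,1$; the factor $\bm H_\xi^{-1}$ in \eqref{eq:sat-axis-linear} cancels the matching factor of the tensor-product quadrature weight $\bm H$, leaving a one-dimensional boundary sum with weight $h_j^{(q)}h_j^{(r)}/h_j^{(\xi)}$ and geometric factor $J_j\sqrt{\xi_{xj}^2+\xi_{yj}^2}$; what remains pointwise is $\mathbf q_j^\top W\bigl(\mathrm{SAT}^n_j+\mathrm{SAT}^s_j\bigr)$, which by the two algebraic identities recorded just below \eqref{eq:sat-linear} equals $\tau_h w_{1j}(w_{2j}-\gamma w_{1j}-d_1)+\tau_n w_{2j}(w_{2j}-\gamma w_{1j}-d_1)+\tau_s U_n w_{3j}(w_{3j}-d_2)$ at an inflow face ($U_n<0$), and just the first two terms at an outflow face ($U_n\ge 0$). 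Adding this to the differential part face by face produces precisely $\frac{dE_d}{dt}=\mathbb{BT}_n$ with summand $\operatorname{\mathbb{BT}}_j$ as in \eqref{eq:boundary-term-pointwise-linear} (taking $\mathbf d=0$).

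The final step is to make the summand non-positive. Substituting the prescribed penalties $\tau_n=\lambda_2$, $\tau_h=\gamma\lambda_2$, $\tau_s\ge 1$ into $\operatorname{\mathbb{BT}}_j$ and completing the square — this is exactly the computation in the proof of Lemma \ref{lem:boundary-term-pointwise-linear} — collapses the $(w_1,w_2)$-block to $-(\lambda_1+\gamma^2\lambda_2)w_{1j}^2$, which is $\le 0$ since $\gamma^2\le-\lambda_1/\lambda_2$ with $\lambda_1>0$, $\lambda_2<0$, while the $w_3$-block is $(\tau_s-1)U_n w_{3j}^2\le 0$ at an inflow face (as $U_n<0$, $\tau_s\ge 1$) and $-U_n w_{3j}^2\le 0$ at an outflow face. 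Since $J_j>0$, $\sqrt{\xi_{xj}^2+\xi_{yj}^2}>0$ and the SBP quadrature weights are strictly positive, $\mathbb{BT}_n\le 0$, hence $dE_d/dt\le 0$, which is the claim.

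I expect the only genuinely delicate step to be the penalty bookkeeping: verifying that the composition of $W^{-1}RPS$, the inverse norm $\bm H_\xi^{-1}$, the projectors $\mathbf e_{\xi,1},\mathbf e_{\xi,n_\xi}$, the Jacobian $J$ and the surface factor $\sqrt{\xi_x^2+\xi_y^2}$ combine so that the penalty contribution to the energy rate is \emph{exactly} the boundary quadrature of the pointwise terms appearing in \eqref{eq:boundary-term-pointwise-linear}, with no spurious cross-terms surviving from the tangential $\mathrm{SAT}^s$ and no mismatch in the weights; once the identities just below \eqref{eq:sat-linear} are in hand, the rest is a transcription of the already-established continuous and semi-discrete arguments plus Lemma \ref{lem:boundary-term-pointwise-linear}.
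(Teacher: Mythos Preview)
Your proposal is correct and follows exactly the route the paper sets up: the paper does not spell out a separate proof for this theorem, but it is meant to follow immediately from the preceding result $dE_d/dt=\mathrm{BT}_d$ (obtained via the SBP property only), the two pointwise identities recorded just below \eqref{eq:sat-linear} for $\mathbf q^\top W\,\mathrm{SAT}^{n,s}_j$, and Lemma~\ref{lem:boundary-term-pointwise-linear}. Your bookkeeping of the projectors, $\bm H_\xi^{-1}$, the Jacobian and the surface factor is the right way to land on the pointwise form \eqref{eq:boundary-term-pointwise-linear}, after which the sign is exactly Lemma~\ref{lem:boundary-term-pointwise-linear}.
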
 
Next, we will derive the SATs for the nonlinear RSWE and prove numerical stability.
\subsubsection{SATs for the nonlinear RSWE}
As before, at each point on the boundary we set the $\operatorname{SAT}$s
\begin{equation}\label{eq:sat-non-linear}
\mathrm{SAT}^{n} =
\begin{bmatrix}
    \tau_h\left(\alpha G_n - \beta F_n- d_1\right)\\
     \tau_n\vb{n}\left(\alpha G_n - \beta F_n- d_1\right)
\end{bmatrix},
\quad
\mathrm{SAT}^{s} = 
\begin{bmatrix}
    0\\
     \tau_s\vb{m}u_n\left(u_s- d_2\right)
\end{bmatrix},
\end{equation}
where $\tau_h$, $\tau_n$ and $\tau_s$ are penalty parameters that will be determined by ensuring numerical stability. Here $\bm{n} = [n_x, n_y]^\top$  and  $\bm{m} = [m_x, m_y]^\top = [n_y, -n_x]^\top$ are, respectively, the unit normal vector on the boundary and the  unit tangential vector on the boundary. Note that $u_n =\bm{n}\cdot \bm{u} $ is the normal velocity component and  $u_s = \bm{m}\cdot \bm{u}$ is the tangential component of the velocity field on the boundary. It is also noteworthy that $\mathrm{SAT}^{n}$ and $\mathrm{SAT}^{s}$ encode the nonlinear boundary operators defined by \eqref{eq:BC-sub-critical_nonlinear_inflow},\eqref{eq:BC-sub-critical_nonlinear_outflow}, and would vanish if $G_n, F_n, u_s$ satisfy the nonlinear BCs exactly.

\noindent
As before, the following identities 
$$
\begin{bmatrix}
    G\\
    \bm{F}
\end{bmatrix}^\top \mathrm{SAT}^{n} 
= 
\tau_h G \left(\alpha G_n - \beta F_n- d_1\right) + \tau_n F_n \left(\alpha G_n - \beta F_n- d_1\right),
     \quad
\begin{bmatrix}
    G\\
    \bm{F}
\end{bmatrix}^\top\mathrm{SAT}^{s} 
= 
\tau_s  u_n u_s \left(u_s- d_1\right), 
$$
will be useful for the nonlinear stability analysis.

The $\xi$-axis SATs for the nonlinear RSWE is given by
\begin{eqnarray}
\mathrm{SAT}_\xi=
 \begin{cases}
    \bm{H}_{\xi}^{-1}\left(\mathbf{e}_{\xi, 1}\sqrt{\bm{\xi}_x^2 + \bm{\xi}_y^2}\left(\mathrm{SAT}^{n} + \operatorname{SAT}^{s}\right) + \mathbf{e}_{\xi, n_\xi}\sqrt{\bm{\xi}_x^2 + \bm{\xi}_y^2}\left(\mathrm{SAT}^{n} + \mathrm{SAT}^{s}\right)\right), \quad \ \text{if} \ u_n < 0, \\
   \bm{H}_{\xi}^{-1}\left(\mathbf{e}_{\xi,1}\sqrt{\bm{\xi}_x^2 + \bm{\xi}_y^2}\mathrm{SAT}^{n} + \mathbf{e}_{\xi,n_\xi}\sqrt{\bm{\xi}_x^2 + \bm{\xi}_y^2}\mathrm{SAT}^{n}\right), \quad \ \text{if} \ u_n > 0.
    \end{cases}
\end{eqnarray}
where $\mathrm{SAT}^{n}$ and $\mathrm{SAT}^{s}$ are given by \eqref{eq:sat-non-linear}.

Here $\mathbf{e}_{\xi,1}$ and  $\mathbf{e}_{\xi,n_\xi}$ defined in \eqref{eq:boundary-projection} are projection matrices that project the SATs to the respective boundaries $\xi_1 =0$ and $\xi_{n_\xi} = 1$. Again, we assume homogeneous boundary data $\bm{d}=0$ and introduce the numerical boundary term at every point on the boundary
\begin{equation}\label{eq:boundary-term-pointwise-nonlinear}
    \operatorname{\mathbb{BT}}_j =
\begin{cases}
-\left(G_{j}F_{nj}\right)+ \tau_h G_{j} \left(\alpha G_{nj} - \beta F_{nj}\right) +
     \tau_n F_{nj}\left(\alpha G_{nj} - \beta F_{nj}\right) + \tau_s h_{j} u_{nj} u_{sj}^2, \quad \ \text{if} \ u_{nj} < 0\\
-\left(G_{j}F_{nj}\right)+ \tau_h G_{j} \left(\alpha G_{nj} - \beta F_{nj}\right) +
     \tau_n F_{nj}\left(\alpha G_{nj} - \beta F_{nj}\right) , \quad \ \text{if} \ u_{nj} \ge  0.
     \end{cases}.
\end{equation}
Note that 
$
G_n = gh + \frac{1}{2}u_n^2 
$
and 
$G = gh + \frac{1}{2}u_n^2 + \frac{1}{2}u_s^2  = G_n + \frac{1}{2}u_s^2 $.

The following Lemma constrains the penalty parameters and ensures that the boundary term $\operatorname{BT}_j$  is never positive.
\begin{lemma}\label{lem:boundary-term-pointwise-nonlinear}
    Consider the boundary term  $\operatorname{\mathbb{BT}}_j$ defined by \eqref{eq:boundary-term-pointwise-nonlinear}, where $u_n + c >0$, $u_n-c < 0$. and assume $\alpha\ge 0$,  $\beta \ge 0$ with $|\alpha| + |\beta| >0$.
    If the boundary parameters $\alpha$, $\beta$ and the penalty weights $\tau_h$, $\tau_n$ and $\tau_s$ satisfy
    \begin{itemize}
      \item[ 1:]  When $\alpha = 0$, $\beta > 0$:  $\tau_h = -1/\beta$, $\tau_n \ge 0$ and $\tau_s \ge 1/2$.
       \item[2:]  When  $\alpha > 0$, $\beta = 0$:  $\tau_h \le 0$, $\tau_n = 1/\alpha$ and $\tau_s \ge 1/2$,
       \item[3:]  When $\alpha > 0$, $\beta > 0$:  $\tau_h =-1/(2\beta)$, $\tau_n = 1/(2\alpha)$ and $\tau_s \ge 1/2$,
    \end{itemize}
  then the boundary is never positive, that is $\operatorname{\mathbb{BT}}_j \le 0$.
\end{lemma}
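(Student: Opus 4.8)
The statement follows by a direct pointwise computation on $\operatorname{\mathbb{BT}}_j$ in \eqref{eq:boundary-term-pointwise-nonlinear}, carried out in the three parameter regimes. First I would rewrite everything in terms of the boundary fields $F_n=hu_n$, $u_s$, $G_n=gh+\tfrac{1}{2}u_n^2$ and $G=G_n+\tfrac{1}{2}u_s^2$, and record the two facts the argument rests on. Because $h>0$ and $g>0$, both $G_n>0$ and $G>0$ at every boundary point, so any term $-(\text{nonnegative})\,u_s^2G_n$ or $-(\text{nonnegative})\,GG_n$ is automatically nonpositive; and the tangential-velocity penalty contributes $\tau_s hu_nu_s^2$ to $\operatorname{\mathbb{BT}}_j$ only at an inflow point ($u_n<0$), where it is nonpositive for $\tau_s\ge0$. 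Within each regime I would also split into the sub-cases $u_n<0$ and $u_n\ge0$.

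For Case~1 ($\alpha=0$, $\beta>0$), $\alpha G_n-\beta F_n=-\beta F_n$, and $\tau_h=-1/\beta$ is chosen so that the Bernoulli contribution cancels the energy-flux term exactly: $-GF_n+\tau_h G(-\beta F_n)=-GF_n+GF_n=0$. Hence $\operatorname{\mathbb{BT}}_j=-\tau_n\beta F_n^2+\tau_s hu_nu_s^2$ (last term only when $u_n<0$), which is $\le0$ since $\tau_n\ge0$, $\beta>0$ and $\tau_s\ge0$. For Case~2 ($\alpha>0$, $\beta=0$), $\tau_n=1/\alpha$ cancels the $G_nF_n$ cross term; substituting $G=G_n+\tfrac{1}{2}u_s^2$ leaves $\operatorname{\mathbb{BT}}_j=\tau_h\alpha\,GG_n-\tfrac{1}{2}hu_nu_s^2+\tau_s hu_nu_s^2$ (last term only for $u_n<0$). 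The first piece is $\le0$ because $\tau_h\le0$ and $G,G_n>0$; the $u_s^2$ part equals $-\tfrac{1}{2}hu_nu_s^2\le0$ at an outflow and $hu_nu_s^2(\tau_s-\tfrac{1}{2})\le0$ at an inflow, the latter precisely because $\tau_s\ge\tfrac{1}{2}$.

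For Case~3 ($\alpha,\beta>0$), the choices $\tau_h=-1/(2\beta)$, $\tau_n=1/(2\alpha)$ annihilate the $G_nF_n$ cross term and, after substituting $G=G_n+\tfrac{1}{2}u_s^2$, yield
\begin{equation*}
\operatorname{\mathbb{BT}}_j=-\frac{\alpha}{2\beta}G_n^2-\frac{\beta}{2\alpha}F_n^2-\frac{\alpha}{4\beta}u_s^2G_n-\frac{1}{4}hu_nu_s^2+\tau_s hu_nu_s^2,
\end{equation*}
with the last term present only when $u_n<0$. The first two terms are manifestly $\le0$, the third is $\le0$ since $G_n>0$, and the last two combine to $-\tfrac{1}{4}hu_nu_s^2\le0$ at an outflow and to $hu_nu_s^2(\tau_s-\tfrac{1}{4})\le0$ at an inflow, using $\tau_s\ge\tfrac{1}{2}>\tfrac{1}{4}$.

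I do not expect a genuine obstacle; the proof is essentially bookkeeping, but one point needs care. Unlike the linear analysis, substituting $G=G_n+\tfrac{1}{2}u_s^2$ produces cubic contributions $u_s^2G_n$ and $u_s^2F_n=u_s^2hu_n$ with no a~priori sign. These are controlled by exactly the two observations above: positivity of the water height gives $G_n>0$ and kills the $u_s^2G_n$ terms, while the tangential penalty $\tau_s hu_nu_s^2$ with $\tau_s\ge\tfrac{1}{2}$ absorbs the wrong-sign remnant of $u_s^2hu_n$ at inflow boundaries (in Case~1 this remnant cancels on its own, which is why the constraint on $\tau_s$ is not sharp there). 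Finally, $|\alpha|+|\beta|>0$ guarantees every admissible $(\alpha,\beta)$ lies in one of the three enumerated regimes, so no case is missing.
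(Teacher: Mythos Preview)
Your proposal is correct and follows essentially the same approach as the paper's proof: a direct case-by-case substitution of the penalty choices into $\operatorname{\mathbb{BT}}_j$, using $G=G_n+\tfrac{1}{2}u_s^2$ and the positivity of $G$, $G_n$ to sign the residual terms. The only cosmetic difference is that in Case~3 you expand $GG_n=G_n^2+\tfrac{1}{2}u_s^2G_n$ one step further, whereas the paper leaves the term as $-\frac{\alpha}{2\beta}GG_n$ and appeals directly to $G,G_n>0$; both are equivalent, and your remark that the bound $\tau_s\ge\tfrac{1}{2}$ is not sharp in Case~1 is a correct additional observation.
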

\begin{proof}
Case 1:
    Consider the boundary term  $\operatorname{\mathbb{BT}}_j$ defined by \eqref{eq:boundary-term-pointwise-nonlinear} with $\alpha= 0$ and $\beta >0 $, and set $\tau_h = -1/\beta$
     we have
     $$
      \operatorname{\mathbb{BT}}_j =
\begin{cases}
     -\tau_n \beta F_{nj}^2 + \tau_s h_j u_{nj} u_{sj}^2, \quad \ \text{if} \ u_{nj} < 0\\
     -\tau_n \beta F_{nj}^2  , \quad \ \text{if} \ u_{nj} \ge  0.
     \end{cases}.
     $$
     Thus if $\tau_n \ge 0$ and $\tau_s \ge 1/2$ then $\operatorname{\mathbb{BT}}_j\le 0$.
     

\noindent
Case 2:
    Consider the boundary term  $\operatorname{\mathbb{BT}}_j$ defined by \eqref{eq:boundary-term-pointwise-nonlinear} with $\alpha> 0$ and $\beta =0 $, and set $\tau_n = 1/\alpha$
     we have
     $$
      \operatorname{\mathbb{BT}}_j =
\begin{cases}
     \tau_h \alpha G_{j}  G_{nj}  + 
    \left(\tau_s -\frac{1}{2}\right)h_ju_{nj}u_{sj}^2 , \quad \ \text{if} \ u_{nj} < 0\\
 \tau_h \alpha G_{j}  G_{nj}  
     -\frac{1}{2}h_ju_{nj}u_{sj}^2, \quad \ \text{if} \ u_{nj} \ge  0.
     \end{cases}.
     $$
where we have used
$
G_n = gh + \frac{1}{2}hu_n^2 
$
and 
$G = gh + \frac{1}{2}u_n^2 + \frac{1}{2}u_s^2  = G_n + \frac{1}{2}u_s^2 $.
Note that $G_{j} >0$,  $G_{nj} >0$, and $G_{j}  G_{nj} >0 $, so if $\alpha >0$, $\tau_h \le 0$ and $\tau_s \ge 1/2$ then $\operatorname{\mathbb{BT}}_j \le 0$.
\newline
\newline
\noindent
Case 3:
    Consider the boundary term  $\operatorname{\mathbb{BT}}_j$ defined by \eqref{eq:boundary-term-pointwise-nonlinear} with $\alpha> 0$ and $\beta >0 $, and set $\tau_h = -1/(2\beta)$ $\tau_n = 1/(2\alpha)$
     we have
     $$
      \operatorname{\mathbb{BT}}_j =
\begin{cases}
      -\frac{\alpha}{2\beta} G_{j}  G_{nj}  - \frac{\beta}{2\alpha} F_{nj}^2 + 
    \left(\tau_s -\frac{1}{4}\right)h_ju_{nj}u_{sj}^2 , \quad \ \text{if} \ u_{nj} < 0\\
-\frac{\alpha}{2\beta} G_{j}  G_{nj}  - \frac{\beta}{2\alpha} F_{nj}^2  
     -\frac{1}{4}h_ju_{nj}u_{sj}^2, \quad \ \text{if} \ u_{nj} \ge  0.
     \end{cases}.
     $$
As above, we have used
$
G_n = gh + \frac{1}{2}u_n^2 
$
and 
$G = gh + \frac{1}{2}u_n^2 + \frac{1}{2}u_s^2  = G_n + \frac{1}{2}u_s^2 $.
Note that $G_{j} >0$,  $G_{nj} >0$, and $G_{j}  G_{nj} >0 $, thus if  $\tau_s \ge 1/2 > 1/4$ then $\operatorname{\mathbb{BT}}_j \le 0$.
\end{proof}
The following theorem ensures the stability of nonlinear semi-discrete approximation \eqref{eqn:nlSWE2D-semi-discrete-SAT} for the nonlinear RSWE IBVP.
\begin{theorem}
    Consider the semi-discrete approximation  \eqref{eqn:nlSWE2D-semi-discrete-SAT} of the nonlinear vector invariant  RSWE \eqref{eqn:nlSWE2D} 
    at sub-critical flows, $ u_n + c >0$, $u_n-c < 0$. If the boundary parameters $\alpha$, $\beta$ and the penalty parameters $\tau_h,\tau_n,\tau_s$ satisfy the conditions of Lemma \ref{lem:boundary-term-pointwise-nonlinear},  then the semi-discrete approximation  \eqref{eqn:nlSWE2D-semi-discrete-SAT} is energy/entropy stable, that is, with homogeneous boundary data $\bm{d}=0$, we have
    $$
    \frac{dE_d}{dt} = \mathbb{BT}_n:= \sum_{\xi = q, r}\sum_{j=1}^{\frac{n_qn_r}{n_\xi}} \left.\left(J_j\left(\sqrt{\xi_{xj}^2 + \xi_{yj}^2}\operatorname{\mathbb{BT}}_j\right)\right|_{\xi_{1} = 0} + \left.\left(J_j\sqrt{\xi_{xj}^2 + \xi_{yj}^2}\operatorname{\mathbb{BT}}_j\right)\right|_{\xi_{n_\xi} = 1}\right) \frac{h_j^{(q)}h_j^{(r)}}{h_j^{(\xi)}}\le 0,
    $$
    where the boundary term $\operatorname{\mathbb{BT}}_j$ is given by \eqref{eq:boundary-term-pointwise-nonlinear}.
\end{theorem}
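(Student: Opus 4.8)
The plan is to replicate, term by term, the continuous entropy identity of Theorem~\ref{theo:energy-stability-nonlinear-ibvp} at the semi-discrete level, with the boundary flux now corrected by the SATs, and then to invoke Lemma~\ref{lem:boundary-term-pointwise-nonlinear}. First I would differentiate the discrete energy \eqref{eq:elemental_curv-discrete} in time. Since each $e_{ij}$ is an ordinary (pointwise) function of $(h_{ij},\mathbf{u}_{ij})$, the chain rule in $t$ gives $\dot e_{ij}=G_{ij}\dot h_{ij}+\mathbf{F}_{ij}\cdot\dot{\mathbf{u}}_{ij}$ exactly — no discrete spatial product rule is needed — so
\begin{equation*}
\frac{dE_d}{dt}=\sum_{i=1}^{n_q}\sum_{j=1}^{n_r}\big(G_{ij}\dot h_{ij}+\mathbf{F}_{ij}\cdot\dot{\mathbf{u}}_{ij}\big)J_{ij}h_i^{(q)}h_j^{(r)}.
\end{equation*}
Substituting \eqref{eqn:nlSWE2D-semi-discrete-SAT}, the vorticity term contributes $\mathbf{F}_{ij}\cdot(\omega_{ij}\mathbf{u}_{ij}^\perp)=h_{ij}\omega_{ij}(\mathbf{u}_{ij}\cdot\mathbf{u}_{ij}^\perp)=0$ pointwise and drops out, exactly as in the continuous computation, and the Coriolis constant $f_c$ never enters.

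Next I would split the remainder into a volume part coming from $-G\,\grad_d\cdot{\mathbf{F}}-\mathbf{F}\cdot\grad_d{G}$ and a SAT part. The volume part is exactly the quantity handled by the preceding theorem (the one that establishes $dE_d/dt=\mathrm{BT}_d$ for the SAT-free scheme): because the curvilinear transformation \eqref{eq:curvilinear-transformation-discrete} is structure preserving — conservative split-metric form for the discrete divergence and curl, non-conservative form for the discrete gradient — the $J^{-1}$ in $\grad_d\cdot{\mathbf{F}}$ cancels the quadrature weight $J$, and the SBP property \eqref{eq:upw_SBP} of $D_q$ and $D_r$ telescopes the two pieces down to the discrete boundary term $\mathrm{BT}_d$ of \eqref{eq:BT-nonlinear-discrete}, whose integrand is $-G_jF_{nj}$ after using the algebraic identity $\tfrac12 gh^2u_n+eu_n=GF_n$. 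For the SAT part I would use the projection matrices \eqref{eq:boundary-projection} and the factor $\bm{H}_{\xi}^{-1}$ in the $\xi$-axis SAT, which cancels one power of $h_j^{(\xi)}$; the SAT contribution then reduces to a sum over boundary nodes of $J_j\sqrt{\xi_{xj}^2+\xi_{yj}^2}\,[G_j,\mathbf{F}_j]^\top(\mathrm{SAT}^n+\mathrm{SAT}^s)\,h_j^{(q)}h_j^{(r)}/h_j^{(\xi)}$, with $\mathrm{SAT}^s$ present only at inflow nodes ($u_{nj}<0$). The identities recorded just above \eqref{eq:boundary-term-pointwise-nonlinear}, namely $[G,\mathbf{F}]^\top\mathrm{SAT}^n=\tau_h G(\alpha G_n-\beta F_n-d_1)+\tau_n F_n(\alpha G_n-\beta F_n-d_1)$ and $[G,\mathbf{F}]^\top\mathrm{SAT}^s=\tau_s h u_n u_s(u_s-d_2)$, then show that with homogeneous data the sum of the volume and SAT parts is precisely $dE_d/dt=\mathbb{BT}_n$ with $\mathbb{BT}_j$ the pointwise term of \eqref{eq:boundary-term-pointwise-nonlinear}. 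Finally, Lemma~\ref{lem:boundary-term-pointwise-nonlinear} gives $\mathbb{BT}_j\le0$ at every boundary node under each of the three parameter regimes, and since $J_j>0$, the surface Jacobian $\sqrt{\xi_{xj}^2+\xi_{yj}^2}$ is positive, and all quadrature weights are positive, every term of $\mathbb{BT}_n$ is nonpositive, so $dE_d/dt=\mathbb{BT}_n\le0$.

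The step I expect to be the main obstacle is the bookkeeping in the second paragraph: one has to verify that the split-form discrete divergence and curl together with the non-conservative discrete gradient are exactly the combination for which telescoping under the SBP rule \eqref{eq:upw_SBP} leaves no interior remainder — this is the whole reason for the structure-preserving choice and would fail for the non-split metric form — and that $\bm{H}_{\xi}^{-1}$, the projection matrices \eqref{eq:boundary-projection}, and the surface Jacobian $J\sqrt{\xi_x^2+\xi_y^2}$ assemble exactly into the coefficients appearing in $\mathbb{BT}_j$, with the inflow/outflow distinction ($\mathrm{SAT}^s$ active only when $u_{nj}<0$, two penalised equations there versus one at outflow) consistently tracked. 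Once that accounting is in place the nonpositivity is immediate from Lemma~\ref{lem:boundary-term-pointwise-nonlinear}.
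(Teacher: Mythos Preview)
Your proposal is correct and follows exactly the route the paper intends: combine the SAT-free energy-rate identity $dE_d/dt=\mathrm{BT}_d$ (established in the preceding theorem via the SBP property and the structure-preserving split of the discrete operators) with the SAT contributions computed through the identities displayed just above \eqref{eq:boundary-term-pointwise-nonlinear}, and then apply Lemma~\ref{lem:boundary-term-pointwise-nonlinear} together with positivity of $J_j$, the surface metric, and the quadrature weights. The paper in fact states the theorem without writing out a proof, treating it as an immediate consequence of that earlier theorem and Lemma~\ref{lem:boundary-term-pointwise-nonlinear}; your write-up simply makes the bookkeeping explicit.
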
 

In the next section, we will present some numerical examples to verify the analysis performed in  this paper.
\section{Numerical results}\label{sec:numerical-experiments}
In this section, we present numerical experiments for both 2D linear and nonlinear RSWE IBVPs across various mesh types to validate the theoretical analysis from previous sections. We have developed a solver to implement the SBP-SAT schemes derived earlier for these problems. Specifically, we consider a Cartesian mesh and two curvilinear meshes, as illustrated in Figure \ref{fig:mms_ini_lin}. The computational domains are representative of geometries commonly used in regional-scale and limited-area climate simulations \cite{DaviesTerry2014, Baumhefner01011982, Caron2013, TermoniaDeckmynHamdi2009, williamson1992standard}. 
The experiments are designed to verify the accuracy of the method and to numerically assess the stability and robustness of both the BCs and the SBP-SAT scheme. It is important to note that the SBP operator employed in this study is fourth-order accurate in the interior, with a second-order accurate boundary closure. For sufficiently smooth solutions, we expect a global convergence rate of third order \cite{gustafsson1975convergence, gustafsson1981convergence}. 
For time integration, we utilize the low-storage, fourth-order and five-stage explicit Runge-Kutta method \cite{carpenter1994fourth}, with the explicit time step $dt>0$ chosen accordingly for stability considerations,

\begin{align}
    dt = \frac{\mathrm{ CFL}}{c} \min_{\xi \in \{q, r\} }\sqrt{  \frac{h_{\xi}^2}{\xi_x^2+\xi_y^2}}.
\end{align}
Here $\mathrm{CFL} =0.5$ is the Courant-Friedrichs-Lewy (CFL) number, $c = |\vb{U}| + \sqrt{gH} >0$ is the background wave speed, $\xi_x, \xi_y$ are the metric derivatives of the curvilinear transformation, and  $h_{\xi}>0$ is the uniform spatial step used to discretize the $\xi$-axis in the reference domain, $(q, r)\in [0,1]^2$.
Some of the parameters used in numerical experiments that are common to both the linear and nonlinear RSWEs are summarized in Table \ref{tab:para}.
\begin{table}[h!]
\begin{center}
\begin{tabular}{|| m{2em}| m{2em} | m{2em}| m{2em}| m{4em}| m{4em} ||} 
  \hline
    $g$ & $f_c$   & $H$ & $L$ & $U$ & $V$\\
  \hline
    $9.81$ & $2$   & 2 & $100$ & $-0.5\sqrt{gH}$ & $-0.5\sqrt{gH}$ \\
  \hline
\end{tabular}
\caption{The  parameters used in the numerical experiments.}
\label{tab:para}
\end{center}
\end{table}
In the coming numerical experiments we will consider the Riemann BCs \eqref{eq:BC-sub-critical_Riemann_inflow}--\eqref{eq:BC-sub-critical_Riemann_outflow} and \eqref{eq:BC-Riemann_nonlinear_inflow}--\eqref{eq:BC-Riemann_nonlinear_outflow}, and supply boundary data accordingly. We have also verified the stability of the  mass flux BCs \eqref{eq:BC-sub-critical_massflux_inflow}, \eqref{eq:BC-sub-critical_massflux_nonlinear-inflow} and the Bernoulli's BCs \eqref{eq:BC-sub-critical_velocity-flux_inflow}, \eqref{eq:BC-sub-critical_velocity-flux_nonlinear-inflow}, at the inflow, but these are not reported here.
\subsection{Accuracy}
In this section, we verify the  accuracy of the numerical method using the method of manufactured solutions (MMS) \cite{roache_code_2001}. We consider the exact manufactured solution given by:
\begin{equation}\label{eqn:mms_lin}
\begin{aligned}
    & u_e = \cos(\pi t)\sin(5 \pi x/L)\sin(5 \pi y/L),\\
    & v_e = \sin(\pi t)\cos(5 \pi x/L)\cos(5 \pi y/L),\\
    & h_e = 1 + 0.2\cos(\pi t)\cos(5 \pi x/L)\cos(5 \pi y/L).
\end{aligned} 
\end{equation}
Figure \ref{fig:mms_ini_lin} shows the initial water height plotted on the three different geometries, a Cartesian mesh, a seashell geometry and a panel of the cube-sphere mesh. We generate, source terms, initial and boundary data to match the analytical solution \eqref{eqn:mms_lin}. We run the simulations until the final time $t =10$, on an increasing sequence of mesh resolutions with $n_\xi = 21,41,81,161,321$, $\xi \in \{q, r\}$.
\begin{figure}[tbh]
    \centering
    \includegraphics[width=1\linewidth]{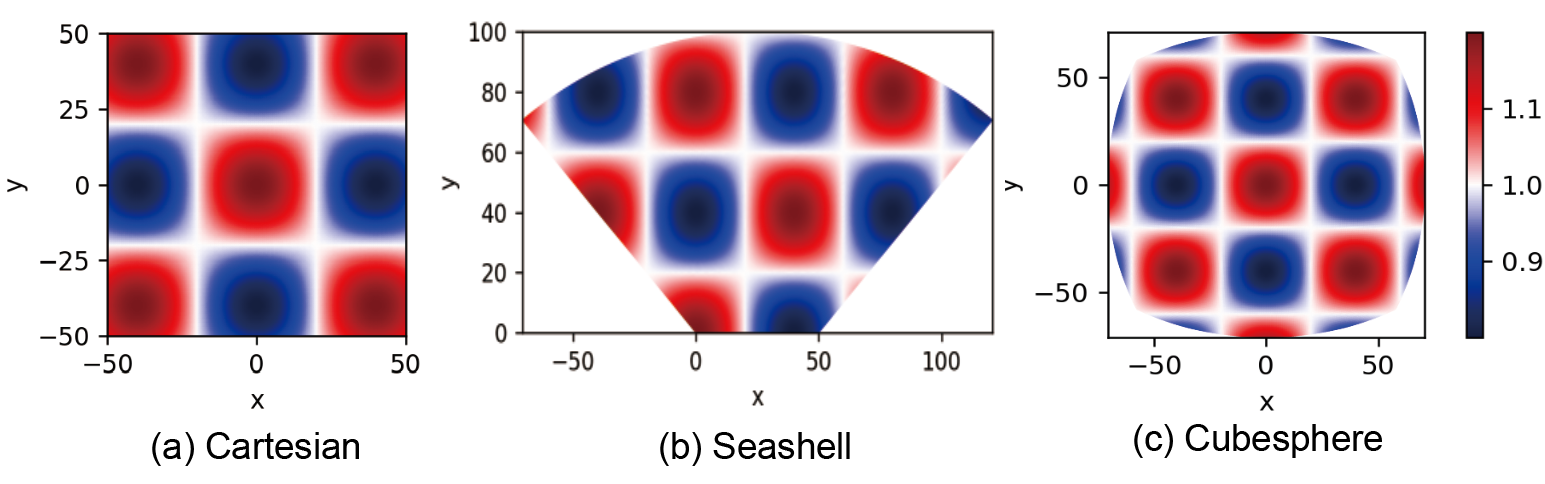}
    \caption{Plots of the water height $h$ for the MMS solution \ref{eqn:mms_lin} at $t = 0$ for different mesh types}
    \label{fig:mms_ini_lin}
\end{figure}
The numerical errors are computed by comparing  numerical solutions with the exact solution in the $l_2$ norm at the final time $t =10$. We have plotted the $l_2$ error at the final time, as a function of the grid resolution $\mathrm{dx}=1/(n_\xi-1)$. Figure \ref{fig:mms_con_lin} shows the errors and the convergence rate of the numerical errors for the linear RSWE IBVP. Similarly, Figure \ref{fig:mms_con_non} shows the numerical errors and the convergence rate of the errors for the nonlinear RSWE IBVP. Note that for linear RSWE IBVP and nonlinear RSWE IBVP the errors converge at the rate $O(\mathrm{dx}^3)$  to zero, see Figures \ref{fig:mms_con_lin}--\ref{fig:mms_con_non}. This is the expected optimal convergence rate for the numerical method \cite{gustafsson1981convergence}. We have also run the simulations for much longer times and did not observe any instability. 
\begin{figure}[thb]
    \centering
    \includegraphics[width=1\linewidth]{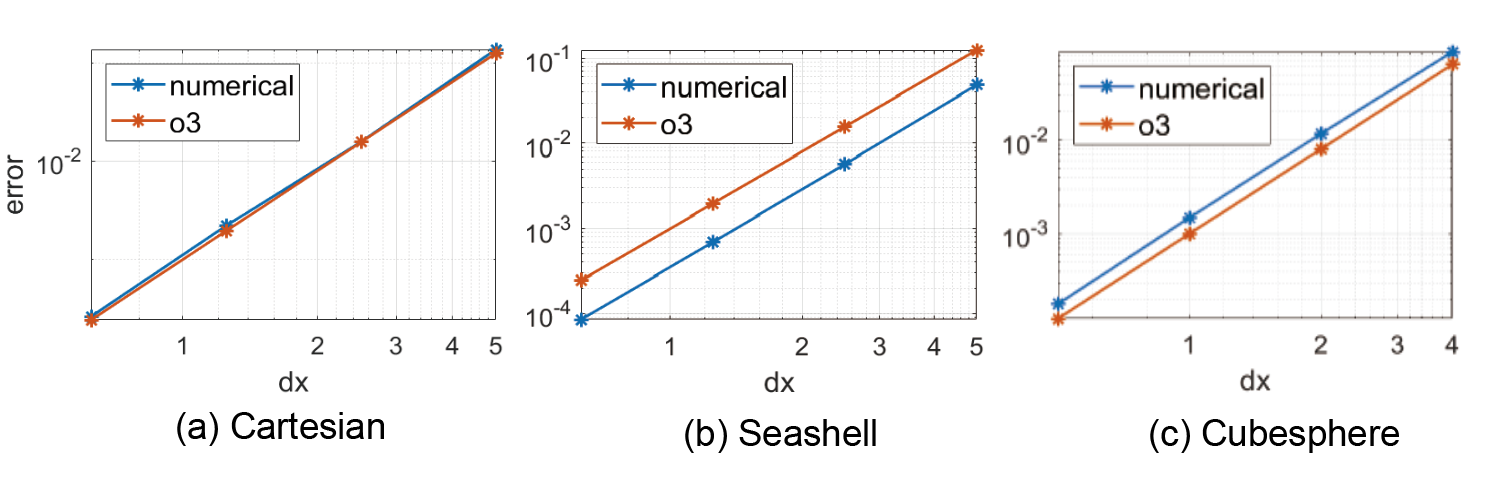}
    \caption{Convergence rate for 2D linear SWE with MMS for different mesh types}
    \label{fig:mms_con_lin}
\end{figure}
\begin{figure}[tbh]
    \centering
    \includegraphics[width=0.9\linewidth]{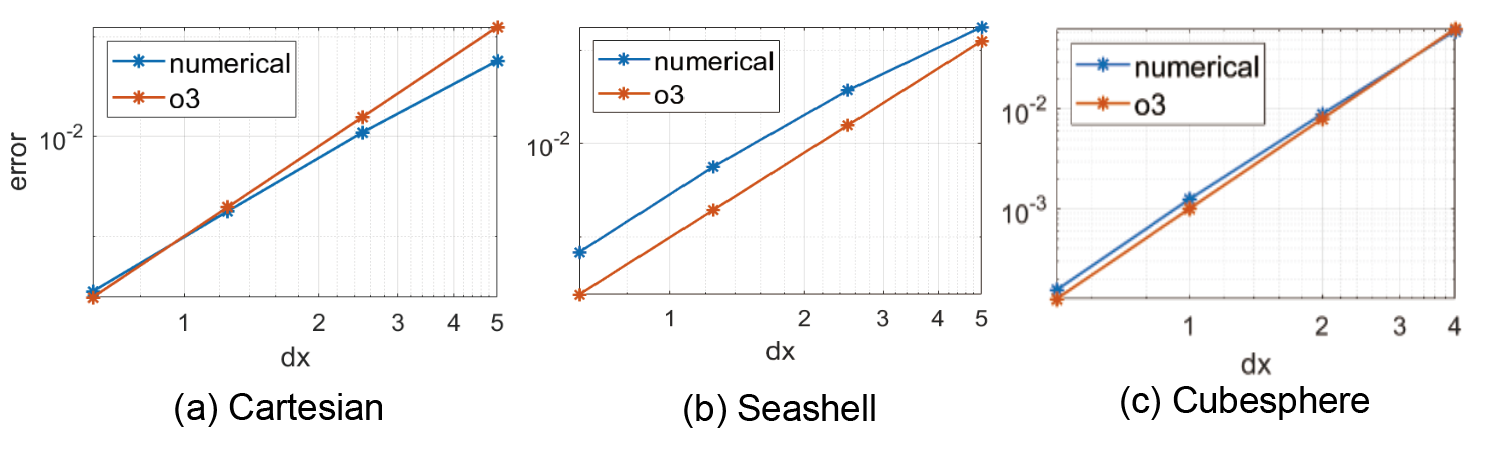}
    \caption{ Convergence rate for 2D nonlinear SWE with MMS for different mesh types}
    \label{fig:mms_con_non}
\end{figure}

\subsection{Initial Gaussian profile}
Next we will simulate   the evolution of an initial 2D Gaussian profile on the three different computational geometries and investigate numerically how the waves interact with the boundary. We consider a medium with the constant background states $U, V, H$ given in  Table \ref{tab:para}. We add the Gaussian perturbation to the initial water height only having
$h_{0}(x,y) = H + \sigma_0 \widetilde{h}_0(x,y)$ with $\sigma_0 = 0.1H$ and $\widetilde{h}_0(x,y)$ is given by 
%

\begin{equation}\label{equ:ini_con1}
    \widetilde{h}_0(x,y) =  \exp\left(-\frac{(x - x_0)^2 + (y - y_0)^2}{9}\right)
\end{equation}
 where $(x_0,y_0)$, given in Table \ref{tab:x0y0}, is the central position of the Gaussian. 
%
\begin{table}[htb]
\centering

\begin{tabular}{|c|c|c|c|}
\hline
& Cartesian & Seashell & Cubesphere \\
\hline
$(x_0,y_0)$ & $(25, 25)$ & $(50, 60)$ & $(25, 25)$ \\
\hline
\end{tabular}
\caption{Values of $(x_0,y_0)$ for different mesh types}
\label{tab:x0y0}
\end{table}

\subsubsection{Linear RSWE IBVP}
 Note that for the linear RSWE \eqref{eqn:LinSWE2D} we evolve the time-dependent perturbations since the background states $H,U,V$ are included as parameters in the linearized equations. Thus, for the  linear IBVP RSWE we set the Riemann BCs \eqref{eq:BC-sub-critical_Riemann_inflow}--\eqref{eq:BC-sub-critical_Riemann_outflow} with homogeneous boundary data. We consider  the initial condition $h_0 = \widetilde{h}_0$ for the water height and zero initial conditions for the the velocity field, that  is $u_0=0$, $v_0 = 0$.

  We discretize the reference domain $(q,r)\in [0,1]^2$ uniformly with the $n_\xi = 151$ number of grid in both directions, $\xi \in \{q, r\}$. We compute the numerical solutions until the final time $T=20$. The snapshots of the $y$-component of the particle velocity $\vb{v}$ are plotted in Figures \ref{fig:lin_car_v}--\ref{fig:lin_sea_v} for the three geometries. Figures \ref{fig:lin_car_v}--\ref{fig:lin_sea_v} show the evolution of the initial Gaussian profile, in particular how the solutions rotate, spread and transported by the constant background flow velocity field. Note that because of the non-reflecting properties of the Riemann BCs \eqref{eq:BC-sub-critical_Riemann_inflow}--\eqref{eq:BC-sub-critical_Riemann_outflow} the solutions exit the computational domain with little reflections. 
  However, the numerical reflections can be considered to be small when compared with the initial amplitude of the wave, arriving at the boundaries.
\begin{figure}
    \centering{
     %
    \includegraphics[scale =\figurescaling]{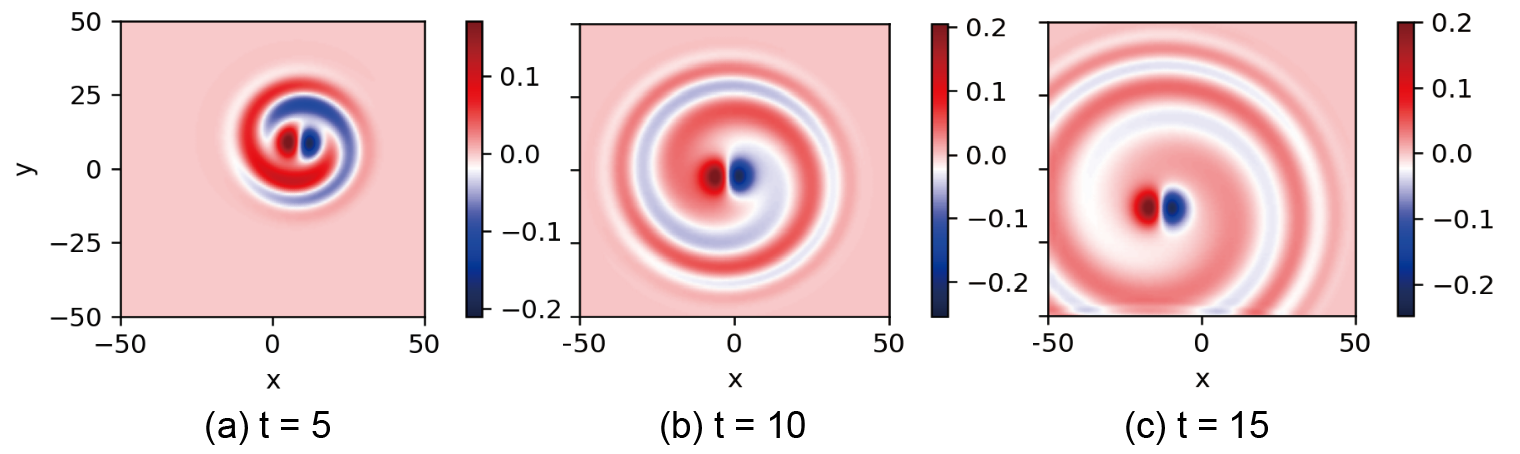}
    \caption{The snapshots of the $y$-component of the particle velocity $\vb{v}$ for  the linear RSWE on 2D rectangular geometry at $t = 5,10,15.$}
    \label{fig:lin_car_v}
    }
\end{figure}
\begin{figure}[tbh!]
    \centering
     \includegraphics[scale =\figurescaling]{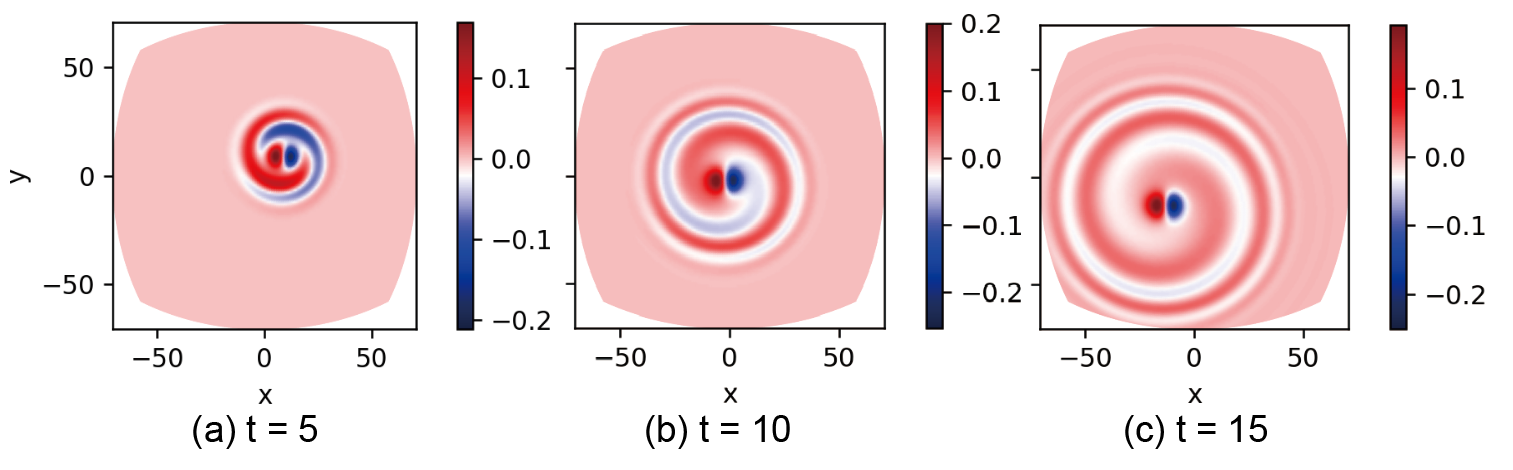}
     \caption{The snapshots of the $y$-component of the particle velocity $\vb{v}$ for  the linear RSWE on the panel of a cubesphere geometry  at $t = 5,10,15.$}
    \label{fig:lin_cub_v}
\end{figure}
\begin{figure}[tbh!]
    \centering
     \includegraphics[scale =0.585]{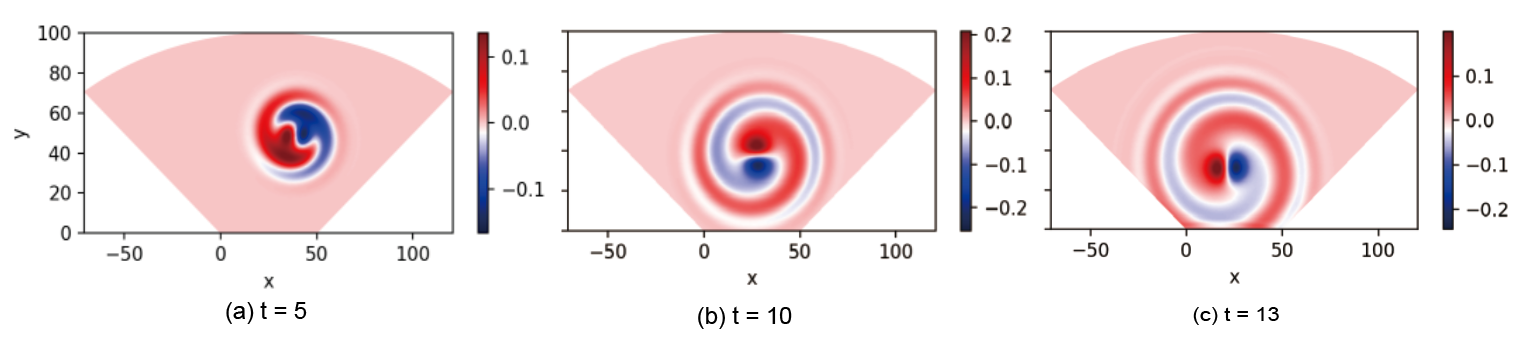}
    \caption{The snapshots of the $y$-component of the particle velocity $\vb{v}$ for  the linear RSWE on the panel of a Seashell geometry  at $t = 5,10,15.$}
    \label{fig:lin_sea_v}
\end{figure}

\subsubsection{Nonlinear RSWE IBVP}
Next, we consider the nonlinear  RSWE \eqref{eqn:nlSWE2D} with the nonlinear Riemann's BCs \eqref{eq:BC-Riemann_nonlinear_inflow}--\eqref{eq:BC-Riemann_nonlinear_outflow}. For the  nonlinear IBVP RSWE we consider   the initial condition $h_{0}(x,y) = H + \sigma_0 \widetilde{h}_0(x,y)$ for the water height and constant initial conditions for the the velocity field, that is  $u_0=U$, $v_0 = V$, with zero perturbations.  The inhomogeneous boundary data  are constructed to account for the effects of the Coriolis force term $f_c$ and ensure compatibility with the initial data, as discussed in section \ref{sec:EffectsofCoriolis}, see \eqref{eq:BC-Riemann_nonlinear_data}--\eqref{eq:BC-sub-critical_velocity-flux_nonlinear-data}. As noted earlier, this construction aims to avoid mismatch with remote boundary data.

As above, we discretize the reference domain $(q,r)\in [0,1]^2$ uniformly with the $n_\xi = 151$ number of grid in both directions, $\xi \in \{q, r\}$. We compute the numerical solutions until the final time $T=20$. The snapshots of the $y$-component of the particle velocity $\vb{v}$ are plotted in Figures \ref{fig:non_car_v}--\ref{fig:non_sea_v} for the three geometries. Figures \ref{fig:non_car_v}--\ref{fig:non_sea_v} show the nonlinear dynamics  of the initial Gaussian profile, in particular how the solutions rotate, spread and transported. The nonlinear dynamics shown in Figures \ref{fig:non_car_v}--\ref{fig:non_sea_v} are somewhat different from the linear dynamics shown in Figures \ref{fig:lin_car_v}--\ref{fig:lin_sea_v}. For the nonlinear RSWE the rotational effects of the Coriolis force term are much more pronounced than in the linear case.  We also note that because of the non-reflecting properties of the nonlinear Riemann BCs \eqref{eq:BC-Riemann_nonlinear_inflow}--\eqref{eq:BC-Riemann_nonlinear_outflow} the solutions exit the computational domain with little reflections. 
However, the numerical reflections can be considered to be small when compared with the initial amplitude of the wave, arriving at the boundaries.
\begin{figure}[tbh!]
    \centering
    \includegraphics[scale =\figurescaling]{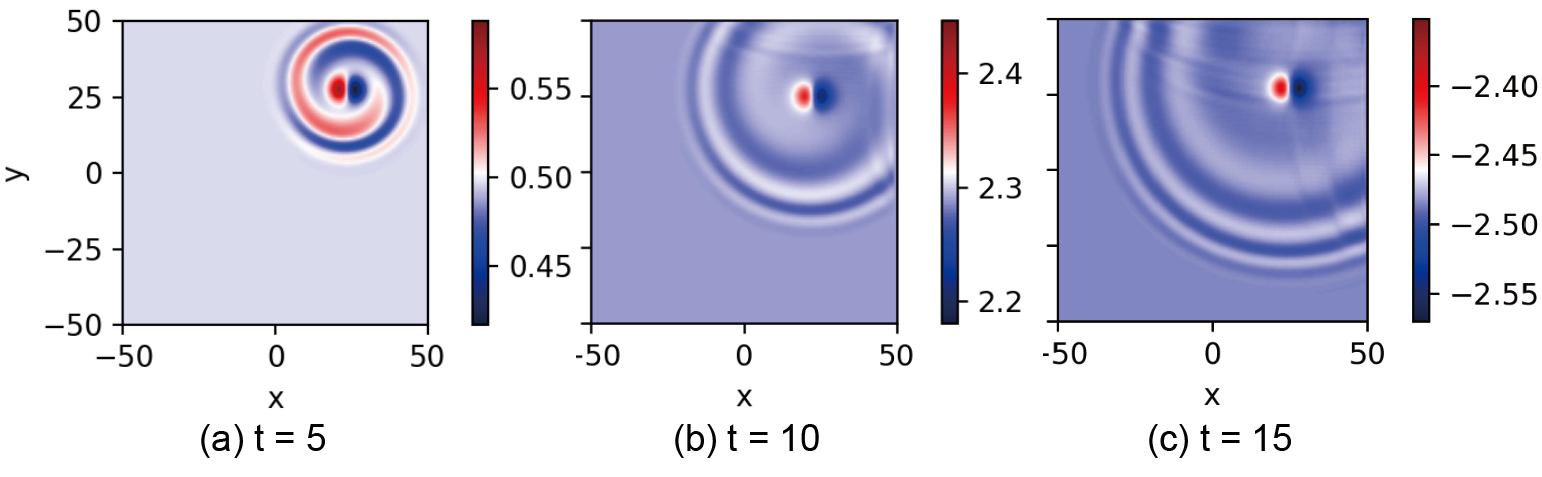}
    \caption{The snapshots of the $y$-component of the particle velocity $\vb{v}$ for  the nonlinear RSWE on 2D rectangular geometry at $t = 5,10,15.$}
    \label{fig:non_car_v}
\end{figure}
\begin{figure}[tbh!]
    \centering
     \includegraphics[scale =\figurescaling]{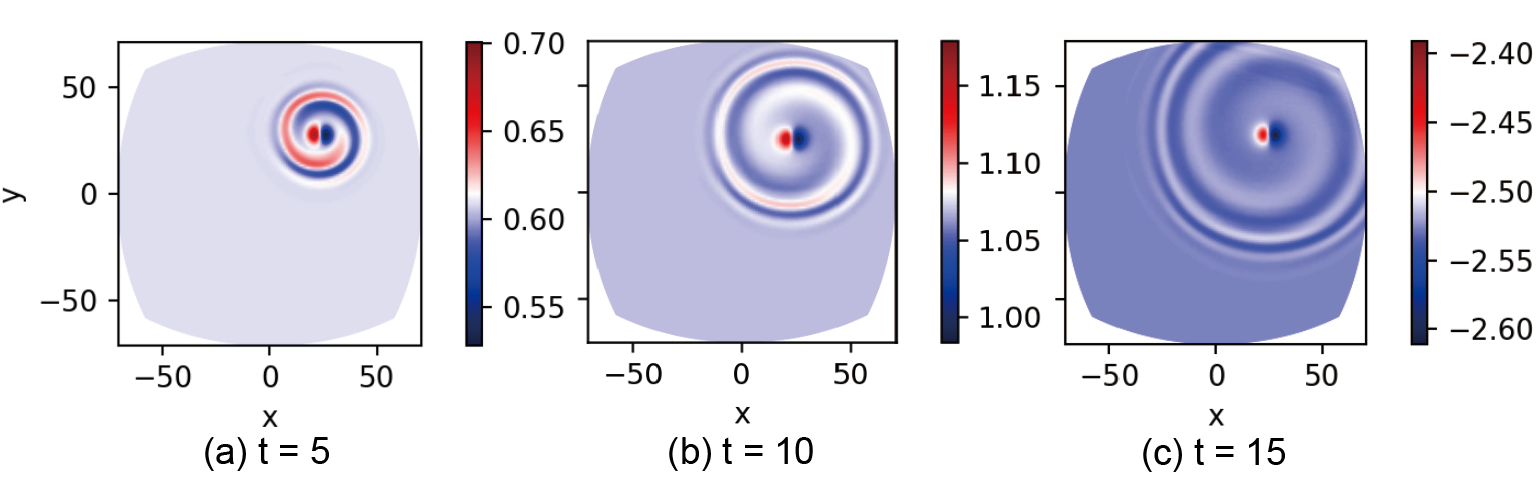}
    \caption{The snapshots of the $y$-component of the particle velocity $\vb{v}$ for  the nonlinear RSWE on the panel of a cubesphere geometry  at $t = 5,10,15.$}
    \label{fig:non_cub_v}
\end{figure}
\begin{figure}[tbh!]
    \centering
     \includegraphics[scale =\figurescaling]{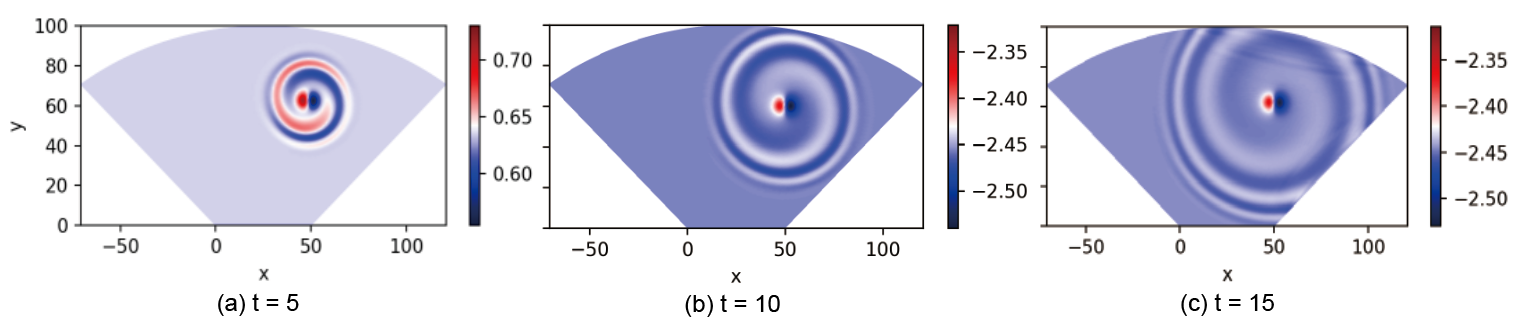}
    \caption{The snapshots of the $y$-component of the particle velocity $\vb{v}$ for  the nonlinear RSWE on the panel of a Seashell geometry  at $t = 5,10,15.$}
    \label{fig:non_sea_v}
\end{figure}

\section{Conclusions and future work}\label{sec:conclusion}
In this study, we derived and analyzed well-posed, energy- and entropy-stable BCs for the 2D linear and nonlinear RSWEs in vector invariant form on spatial domains with smooth boundaries. Our focus is on subcritical flows, which are commonly observed in atmospheric, oceanic, and geostrophic flow problems. We formulated both linear and nonlinear BCs using mass flux, Riemann's invariants, and Bernoulli’s potential, ensuring that the IBVPs are provably entropy- and energy-stable.

The linear theory developed is comprehensive and analogous to the frameworks established in \cite{GHADER20141, nordstrom2022linear, gustafsson1995time}. It provides sufficient conditions for establishing the existence, uniqueness, and energy stability of solutions to the linear IBVP for the RSWE. The nonlinear RSWE IBVP admits more general solutions, and our goal was to derive nonlinear BCs that guarantee entropy stability. To this end, we introduced the notions of linear consistency and linear stability for nonlinear IBVPs. We demonstrate that if a nonlinear IBVP is both linearly consistent and linearly stable, then, given sufficiently regular initial and boundary data over a finite time interval, there exists a unique smooth solution.

A key contribution of this work is the formulation of well-posed linear and nonlinear BCs for the RSWE in vector invariant form, tailored for high-order numerical methods. We developed high-order accurate, energy- and entropy-stable SBP-SAT numerical schemes for the corresponding linear and nonlinear IBVPs on curvilinear meshes. Detailed numerical experiments verify the accuracy of these methods and demonstrate the robustness of both the BCs and the numerical schemes. This work extends the results of \cite{HEW2025113624}, which addressed the 1D shallow water equations, to the 2D linear and nonlinear RSWE on geometrically complex domains and curvilinear meshes.

Future research will focus on extending these linear and nonlinear BCs, as well as the numerical methods, to the thermal RSWE \cite{RICARDO2024112605, RicardoDuruLee2024}, incorporating thermal effects. Additionally, we aim to adapt these approaches to the Euler equations \cite{RicardoDuruLee2024, ricardo2024thermodynamicconsistencystructurepreservationsummation}, which model compressible atmospheric flows. These advancements will significantly enhance the robustness, efficiency, and accuracy of numerical simulations in limited-area and regional atmospheric models \cite{DaviesTerry2014, Baumhefner01011982, Caron2013, TermoniaDeckmynHamdi2009, williamson1992standard} and oceanic flow models \cite{zeitlin2018geophysical} within non-periodic, bounded domains.

\bibliographystyle{unsrt} 
\bibliography{main}
\end{document}